\renewcommand{\epsilon}{\varepsilon}
\numberwithin{equation}{section}
\newtheorem{theorem}{Theorem}[section]
\newtheorem{proposition}[theorem]{Proposition}
\newtheorem{lemma}[theorem]{Lemma}
\newtheorem{remark}[theorem]{Remark}
\newtheorem{corollary}[theorem]{Corollary}
\newtheorem{definition}[theorem]{Definition}
\theoremstyle{definition}
\newcommand{\eps}{\varepsilon}
\newcommand{\N}{{\mathbb N}}
\newcommand{\R}{\mathbb{R}}
\def\Xiint#1{\mathchoice
   {\XXiint\displaystyle\textstyle{#1}}%
   {\XXiint\textstyle\scriptstyle{#1}}%
   {\XXiint\scriptstyle\scriptscriptstyle{#1}}%
   {\XXiint\scriptscriptstyle\scriptscriptstyle{#1}}%
   \!\iint}
\def\XXiint#1#2#3{{\setbox0=\hbox{$#1{#2#3}{\iint}$}
     \vcenter{\hbox{$#2#3$}}\kern-.5\wd0}}
\def\longminus{\raisebox{-1ex}{\rotatebox[origin=c]{0}{${-}\mkern-3.5mu{-}$}}}
\def\dashiint{\Xiint\longminus}
\def\XXint#1#2#3{{\setbox0=\hbox{$#1{#2#3}{\int}$ }
\vcenter{\hbox{$#2#3$ }}\kern-.57\wd0}}
\title[Speed of propagation for anisotropic parabolic PDE]{Anisotropic Sobolev embeddings and the speed of propagation for parabolic equations}
\author[F.\ G.\ D\"uzg\"un, S.\ Mosconi \& V. Vespri]{Fatma Gamze D\"uzg\"un, Sunra Mosconi$^{*}$ and Vincenzo Vespri}
\address[F.\ G.\ D\"uzg\"un]{Department of Mathematics, 
\newline\indent
Hacettepe University, 06800, Beytepe, Ankara, Turkey}
\email{gamzeduz@hacettepe.edu.tr }
\address[S.\ Mosconi]{Dipartimento di Matematica e Informatica,
\newline\indent
Universit\`a degli Studi di Catania,
Viale A.\ Doria 6, 95125 Catania, Italy}
\email{mosconi@dmi.unict.it}
\address[V. Vespri]{Dipartimento di Matematica e Informatica ``U. Dini'', 
\newline\indent
Universit\`a di Firenze, Viale Morgagni 67/A, 50134 Firenze, Italy}
\email{vespri@math.unifi.it}
\subjclass[2010]{35K15, 35K65, 35K92, 35R35}
\keywords{Anisotropic equations, Finite speed of propagation, $L^{\infty}$-estimates, Non-uniqueness.}
\begin{document}

\begin{abstract}
We consider a quasilinear parabolic  Cauchy problem with spatial anisotropy of orthotropic type and study the spatial localization of solutions. Assuming the initial datum is localized with respect to a coordinate having slow diffusion rate, we bound the corresponding directional velocity of the support along the flow. The expansion rate is shown to be optimal for large times. 
\end{abstract}

\maketitle

\begin{center}
	\begin{minipage}{8.3cm}
		\small
		\tableofcontents
	\end{minipage}
\end{center}

\section{Introduction}

This paper deals with some properties of weak solutions to the Cauchy problem
\begin{equation}
\label{CaP}
\begin{cases}
u_{t}-{\rm div} A(x, u, Du)=0&\text{ in $S_{T}=\R^{N}\times \ ]0, T[$},\\
u(x, 0)=u_{0}.
\end{cases}
\end{equation}
The field  $A:S_{T}\times \R\times \R^{N}\to \R^{N}$ is only measurable and {\em anisotropic}, i.e.
\begin{equation}
\label{gcond}
\begin{cases}
A_{i}(x, s, z)\, z_{i}\geq \Lambda^{-1}|z_{i}|^{p_{i}}\\
|A_{i}(x, s, z)|\leq \Lambda|z_{i}|^{p_{i}-1}
\end{cases}
\end{equation}
for some $\Lambda>0$ and a suitable choice of $p_{i}> 1$, $i=1,\dots, N$. 
In the  case $p_{i}\equiv p$, $A_{i}(x, s, z)=|z_{i}|^{p-2}z_{i}$, the equation in \eqref{CaP} is known as the parabolic orthotropic $p$-Laplacian equation and, while its principal part is homogeneous, it fails to be isotropic.

Many materials, such as liquid crystals, wood  or earth's crust usually present different diffusion rates along different directions. Moreover, in most of the physical phenomena involved in such media, finite speed of propagation of disturbances is a much more reasonable assumption than the usual infinite-speed one implied by linear equations. This effect can either be caused by additional absorption terms in the model equation or by the intrinsic  diffusion rate of the medium. We are interested in this latter situation which, in the framework assumed here, consists in studying \eqref{CaP} when some of the $p_{i}$'s are greater than $2$.

Let us discuss some features of equation \eqref{CaP} from the mathematical point of view.

\vskip4pt

{\em -- Regularity}\\ 
The anisotropy prescribed by \eqref{gcond} falls into the wider class of problems with {\em non-standard} growth condition. Even in the stationary case, a reasonable regularity theory for such equations requires a bound on the sparseness of the powers $p_{i}$. For elliptic equations driven by a principal part of the form \eqref{gcond}, counterexamples to boundedness are given in the seminal papers \cite{G, M, H}. Sufficient conditions for local boundedness are found in \cite{FS, BMS, DBGV16} and read
\[
\bar p<N,\qquad \max\{p_{1},\dots, p_{N}\}<\bar p^{*},
\]
where
\begin{equation}
\label{d}
\frac{1}{\bar p}=\frac{1}{N}\sum_{i=1}^{N}\frac{1}{p_{i}},\qquad \bar p^{*}=\frac{N\bar p}{N-\bar p}
\end{equation}
while in the limit case $\max\{p_{1}, \dots, p_{N}\}=\bar p^{*}$ it suffices to assume that $u\in L^{\bar p^{*}}_{\rm loc}(\R^{N})$ (which does not follow from the anisotropic Sobolev embedding, see \cite{KK}). With stronger conditions on the $p_{i}$'s and more smoothness assumptions on $A$, H\"older and Lipschitz regularity can be obtained and the literature on the elliptic case is huge. We refer to \cite[Section 6]{Min} and \cite{EMM} for a complete survey on the subject and related bibliography. 

Regarding the parabolic degenerate (or singular) case of \eqref{CaP} much less is known. Indeed, the regularity theory going from linear to degenerate/singular equations in the parabolic framework is much more subtle than in the elliptic one. As far as we know, continuity of solutions to \eqref{CaP} is known only when the principal part is homogeneous, i.e. $p_{i}\equiv p$, thanks to the work of DiBenedetto and collaborators, see \cite{DBGVbook}. Nevertheless, some $L^{\infty}$-theory has been developed in the fully anisotropic case, see \cite{MX, DT07} and the book \cite{AS}.  

\vskip4pt

{\em -- Existence/Uniqueness}\\  In general, even for the heat equation, the Cauchy problem suffers heavy non-uniqueness phenomena, even for smooth and compactly supported initial data. This holds  true also for the general isotropic model equation 
\begin{equation}
\label{mod}
\begin{cases}
u_{t}={\rm div}(|Du|^{p-2}Du)&\text{in $S_{T}$},\\
u(x, 0)=u_{0},
\end{cases}
\end{equation}
and examples can be found either in \cite{BV} or by coupling the results in \cite{V} (see also \cite{Hu}) for the porous media equation with \cite{V2}. In appendix \ref{sec:appA} we will exhibit self-similar examples {\em \'a la} Tikhonov \cite{Ty} (i.e., solutions $u\neq 0$ for $u_{0}\equiv 0$) through a different and direct approach.\\
The main point of the theory is then to identify natural function classes ensuring unique solvability of the Cauchy problem. For linear parabolic equations this is a classical theme initiated by Widder \cite{W} and developed by Aronsson \cite{A}, see the Introduction to \cite[Ch XI]{DBbook} for a discussion. Roughly speaking, well-posedness holds for the class of functions (and initial data) obeying a suitable growth condition at infinity. Moreover, the growth condition is called optimal if any non-negative solution on $S_{T}$ {\em a-priori} satisfies it. 

The optimal well-posedness class was determined for \eqref{mod} in \cite{DBH, DBH2}, following a number of related works on the porous medium equation. In the model case of the anisotropic parabolic equation a growth condition ensuring existence and uniqueness has been studied in \cite{DT07, DT12}, however its optimality is still missing up to now. Indeed, the proofs of optimality usually involve some form of Harnack inequality in turn implying the prescribed growth, but in the anisotropic case no Harnack inequality is in general known.

A much smaller class where well-posedness holds true for \eqref{mod} is the one of $L^{p}(S_{T})$ solutions. 
Lions' method \cite{L} indeed gives existence and uniqueness for the Cauchy problem with $L^{2}(\R^{N})$ initial data, and eventually the method can be extended to problem \eqref{CaP}  assuming monotonicity of $z\mapsto A(x, s, z)$ and $L^{1}(\R^{N})$  initial data (or even $u_{0}$ being a finite measure). In this respect see \cite{DT12} and the book \cite{AS}. 
However, at the level of generality dictated by the s\^ole condition \eqref{gcond} in \eqref{CaP}, no existence and/or uniqueness theory is available, whatever {\em a-priori} summability condition one imposes on the solution. 

\vskip4pt

{\em -- Description of the result and related problems}\\
We are interested in finding an optimal bounds for the directional speeds of propagation of the support of $u_{0}$ under the flow \eqref{CaP}. Due to the changes in the diffusion coefficients with respect to various coordinates, we expect different velocities in each direction.  It is well-known that, when the principal part is homogeneous, i.e.  $p_{i}\equiv p$, finite energy solutions of \eqref{CaP} with initial condition $u_{0}\in C^{\infty}_{c}(\R^{N})$ preserve compactness of the support along the motion whenever $p>2$ (slow-diffusion rate).  In this case, an optimal bound from above to the speed of ${\rm supp}(u(\cdot, t))$ has been determined in \cite{AT}, \cite{BRVV} for a wide class of second order nonlinear parabolic equations, in \cite{B} for higher order ones, while \cite{TV} deals with suitable parabolic systems. A typical result in this setting reads as follows: if $u\in L^{p}(S_{T})$ solves \eqref{mod} for some $p>2$ and ${\rm supp}(u_{0})\subseteq B_{R_{0}}$, then for any $t\in \ ]0, T]$
\begin{equation}
\label{jap}
 {\rm supp}(u(\cdot, t))\subseteq B_{2R_{0}+R(t)},\qquad 
R(t)= Ct^{\frac{1}{N(p-2)+p}}\|u_{0}\|_{1}^{\frac{p-2}{N(p-2)+p}},
\end{equation}
where here and in the following we denote by $\|\ \|_{p}$ the $L^{p}(\R^{N})$ norm, $p\in [1, \infty]$.
In the anisotropic case, the model equation presents {\em locally} in space the same {\em qualitative} finite speed of propagation  in each in direction $x_{i}$ of slow diffusion rate (i.e., for which $p_{i}>2$). We refer to \cite{AS} and the literature therein for these kinds of results. What seems to be missing is a {\em global}, {\em quantitative} estimate of the aforementioned velocities, as prescribed e.g. by \eqref{jap} in the homogeneous principal part scenario.

Since we  will deal with local solutions to \eqref{CaP}, the well-posedeness issues described above suggest that we cannot  expect finite speed of propagation to hold for generic local solutions, even for the parabolic $p$-Laplace equation \eqref{mod}. Indeed, in appendix \ref{sec:appA} (see also Remark \ref{branch}), self-similar solutions  are constructed showing that finite speed of propagation fails at the global level. The main step consists in proving a non-uniqueness result for a merely H\"older continuous system of ODE with suitable initial data.  In order to achieve this, we employ a sub-supersolution method which we think will be of some use for other problems, avoiding the full dynamical system analysis performed, e.g., in  \cite{Hu} or \cite{BV}.\\
Despite the failure of finite speed of propagation in general, we will see that any local solution $u$ of \eqref{CaP} actually possesses a so-called {\em branch} $\tilde u$ (i.e., a solution of \eqref{CaP} coinciding with $u$ on ${\rm supp}(\tilde u)$, see Remark \ref{branch} for some examples), exhibiting finite speed of propagation in each direction of slow diffusion. Finally, we will obtain a quantitative bound from above on the velocity, showing its optimality in a wide range of anisotropies. 

For the purpose of this introduction we state our theorem in the case when all the $p_{i}$  are greater than $2$. We will also consider the case when some (but not all) of the $p_{i}$ are less than or equal than $2$, but the statement is less transparent, and we refer to the last section (namely, Theorem \ref{mainteo} therein) for further details. 

\begin{theorem}
Let $\bar p$ be the harmonic mean of the $p_{i}$'s as per \eqref{d} and suppose that 
\[
\bar p<N,\qquad 2<\min\{p_{1}, \dots, p_{N}\}\leq \max\{p_{1}, \dots, p_{N}\}<\bar p\, \left(1+\frac{1}{N}\right).
\]
Let $u$ be a local weak solution of \eqref{CaP} in $S_{T}$ under the growth conditions \eqref{gcond}  with 
\[
u_{0}\in L^{2}(\R^N),\qquad \emptyset\neq {\rm supp}(u_{0})\subseteq [-R_{0}, R_{0}]^{N}.
\]
Then there is a branch $\tilde u\neq 0$ of $u$ such that   
\begin{equation}
\label{lapo}
{\rm supp} (\tilde u(\cdot, t))\subseteq \prod_{j=1}^{N}[-R_{j}(t), R_{j}(t)],
\end{equation}
for any $t< T$, where 
\begin{equation}
\label{sqw}
R_{j}(t)=2\, R_{0}+Ct^{\frac{N\, (\bar p-p_{j})+\bar p}{\lambda\, p_{j}}}\|u_{0}\|_{1}^{\frac{\bar p}{p_{j}}\frac{p_{j}-2}{\lambda}},\qquad \lambda=N(\bar p -2)+\bar p.
\end{equation}
\end{theorem}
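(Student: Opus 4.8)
The plan is to run a De Giorgi--Nash--Moser iteration on the ``outside'' of an expanding anisotropic box, combined with an $L^\infty$--$L^1$ smoothing estimate to control the constant in terms of $\|u_0\|_1$. First I would establish (or invoke, from the cited $L^\infty$-theory in \cite{MX, DT07, AS}) a local sup-bound of the form $\|u\|_{L^\infty(Q)} \leq C\, t^{-\alpha}\|u_0\|_1^{\beta}$ on suitable intrinsic cylinders $Q$, where $\alpha,\beta$ are the anisotropic scaling exponents dictated by $\lambda = N(\bar p-2)+\bar p$; the hypothesis $\max p_i < \bar p(1+1/N)$ is exactly the anisotropic Sobolev range guaranteeing this estimate holds, which is presumably the content of an earlier section of the paper. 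This reduces matters to a bounded solution with a quantitative bound on $\|u(\cdot,t)\|_\infty$.

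Next I would set up the energy estimate on truncations. Fix a direction $j$ and test the equation with $(u - k)_+ \zeta^{p_j}$ (and the symmetric negative truncation) where $\zeta$ is a cutoff that is $0$ for $x_j \le \rho$ and $1$ for $x_j \ge \rho + \sigma$, independent of the other variables. Because of the orthotropic structure \eqref{gcond}, only the $i=j$ term of $A_i \partial_i \zeta$ survives the cutoff, so the ``bad'' term in the Caccioppoli inequality involves only $\int |\partial_j u|^{p_j-1}|(u-k)_+|\,\zeta^{p_j-1}|\partial_j\zeta|$, which is absorbed by Young's inequality at the cost of $\sigma^{-p_j}\int |(u-k)_+|^{p_j}$. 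The key structural point is that the localization is in one coordinate only, so the cylinder over which we iterate is a slab $\{x_j > \rho\}$ of infinite extent in the other directions; the anisotropic Sobolev embedding in \cite{FS, BMS, DBGV16} with exponent $\bar p^*$ then provides the gain of integrability needed to close the iteration on the level sets $\{u > k_n\}$ with $k_n \uparrow k_\infty$ and $\rho_n \uparrow \rho_\infty$. The outcome is: if $k_\infty$ and the ``advance'' $\rho_\infty - R_0$ are chosen large enough relative to $\|u\|_\infty$ and $t$, then $(u-k_\infty)_+ \equiv 0$ on $\{x_j > \rho_\infty\}$ for $t$ in the given range; balancing the iteration (the standard fast-geometric-convergence lemma, e.g. \cite{DBbook}) forces the relation $\rho_\infty - R_0 \sim C\, t^{(N(\bar p - p_j)+\bar p)/(\lambda p_j)}\|u_0\|_1^{\bar p(p_j-2)/(p_j\lambda)}$, which is precisely \eqref{sqw}.

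The branch $\tilde u$ enters because, as the authors stress, a generic local solution need not have finite speed of propagation: the iteration above controls $u$ only where $u$ is already known to be small, so one constructs $\tilde u$ by a comparison/truncation argument — roughly, solve \eqref{CaP} in the exterior region with zero lateral data and paste, using monotonicity of $z \mapsto A(x,s,z)$ only where needed — so that $\tilde u = u$ on $\operatorname{supp}\tilde u$ and $\tilde u$ vanishes outside the box $\prod_j[-R_j(t),R_j(t)]$. Running the argument direction by direction ($j=1,\dots,N$) and intersecting the resulting slabs gives \eqref{lapo}, with the $2R_0$ coming from the initial box plus the ``cushion'' needed to launch the De Giorgi scheme from where $u_0$ already vanishes.

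The main obstacle I expect is the energy/iteration step in the anisotropic slab: unlike the isotropic case \eqref{jap}, here the Caccioppoli inequality produces, after one Sobolev step, a mixed-norm quantity $\prod_i (\int|\partial_i v|^{p_i})^{1/p_i}$ rather than a single power, so the measure-of-level-set recursion is genuinely anisotropic and the bookkeeping of the scaling exponents — matching the time-power and the $\|u_0\|_1$-power in \eqref{sqw} — is delicate; one must interpolate the directional derivatives $\partial_i v$ for $i\neq j$ against the $L^\infty$ bound and only exploit decay in the $j$-th slab variable. Getting the exponent $(N(\bar p-p_j)+\bar p)/(\lambda p_j)$ to come out right, and checking it is nonnegative (hence a genuine expansion bound) exactly in the stated anisotropy window, is where the condition $\max p_i < \bar p(1+1/N)$ is consumed. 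The optimality claim would then be matched against the self-similar branch solutions constructed in the appendix.
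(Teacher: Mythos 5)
Your overall architecture (directional energy estimates in the $x_j$ variable only, an anisotropic Sobolev step, a geometric iteration, and the $L^{1}$--$L^{\infty}$ smoothing bound to quantify the threshold) is the right one, but two of your key steps would fail as written. First, the quantitative core: with the standard truncation energy $(u-k)_+\,\zeta^{p_j}$ and the plain Troisi embedding ($\alpha_i\equiv 1$), the Caccioppoli inequality produces $r^{-p_j}\iint |u|^{p_j}$ on the right, and the only way to make this small in terms of the data is $\iint|u|^{p_j}\le \int_0^T\|u(\cdot,t)\|_{1}\,\|u(\cdot,t)\|_{\infty}^{p_j-1}\,dt$ together with \eqref{c2}; the time integral then converges only if $N(p_j-1)<\lambda$, i.e.\ $p_j<\bar p(1+1/N)-1$, strictly stronger than the hypothesis. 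This is precisely why the paper does \emph{not} iterate on the quadratic energy: it proves an energy inequality at the level of $|u|^{1+\mu}$ for small $\mu\in\,]0,1[$ (Lemma \ref{Lei2} with $F_\eps(s)\approx|s|^{1+\mu}$, leading to \eqref{eix}) and a Sobolev inequality admitting fractional powers $\alpha_i=(p_i+\mu-1)/p_i<1$ (Proposition \ref{SOBE}, Theorem \ref{PAS}), so the recursion runs on $\iint_{E_n}|u|^{p_j+\mu-1}$ and the integrability requirement becomes $\mu<\bar p_1-p_j$ — this is exactly where $\max p_i<\bar p(1+1/N)$ is consumed, and the exponents of \eqref{sqw} drop out of the ensuing balance. (Incidentally, \eqref{c2} itself needs only $\bar p_1>2$, not the full hypothesis, and must be proved, not quoted, at this generality.) A second structural mismatch: an iteration with levels $k_n\uparrow k_\infty$ ``chosen large enough'' yields $u\le k_\infty$ beyond the slab, i.e.\ a sup bound, not ${\rm supp}$ containment; the paper's scheme keeps the level at zero and shrinks annular slabs $E_n=\{s_n\le|x_j|\le r_n\}$, the smallness launching the iteration coming from Corollary \ref{corL1} and \eqref{c2}.

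The branch construction is the other genuine gap. Solving \eqref{CaP} in an exterior region with zero lateral data and pasting, ``using monotonicity of $z\mapsto A(x,s,z)$,'' is not available: the theorem assumes only the measurable growth/coercivity bounds \eqref{gcond}, with no monotonicity, and the paper stresses that at this generality there is no existence, uniqueness or comparison theory to produce such an auxiliary solution. The paper instead never solves a new problem: Lemma \ref{BS} (a short-time, purely local finite-speed statement proved with the ordinary energy inequality \eqref{ei} and the $\alpha_i\equiv 1$ parabolic embedding) shows that $u$ itself vanishes on an annulus $K''_{2r}\setminus K''_{r}$ around ${\rm supp}(u_0)$ for $0\le t\le\tau(u,r,T)$, so the truncation $\tilde u=u\,\chi_{\R^{N-M}\times K''_{3R_0}}$ is already a solution, hence a branch, on $S_\tau$, and it enjoys the global integrability $\tilde u\in\cap_{i}L^{p_i}(S_\tau)$ needed to legitimize Corollary \ref{corL1}, Theorem \ref{thL1Linf} and the main iteration; a continuation argument ($T^*=T$) then carries the branch to all of $]0,T[$. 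Without some substitute for this mechanism your argument has no justification for applying the mass-type and $L^{1}$--$L^{\infty}$ estimates to a merely local solution — and the Tikhonov-type example in Appendix \ref{sec:appA} shows they can genuinely fail for $u$ itself.
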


Let us make some comments on the result. 
\begin{enumerate}
\item {\em Optimality of the estimated speed.}\\
First, the obtained velocities of propagation are optimal in each direction.  Indeed, the Lebesgue measure of ${\rm supp}(\tilde u(\cdot, t))$ is estimated for large times through \eqref{lapo}  as
\[
\left|{\rm supp}(\tilde u(\cdot, t))\right|\leq C t^{\sum_{i=1}^{N}\frac{N\, (\bar p-p_{i})+\bar p}{\lambda\, p_{i}}}\|u_{0}\|_{1}^{\sum_{i=1}^{N}\frac{\bar p}{p_{i}}\frac{p_{i}-2}{\lambda}}=Ct^{\frac{N}{\lambda}}\|u_{0}\|_{1}^{\frac{N}{\lambda}(\bar p-2)}.
\]
On the other hand, we will prove in Theorem \ref{thL1Linf} below the $L^{\infty}-L^{1}$ estimate
\[
\|\tilde u(\cdot, t)\|_{\infty}\leq C\, t^{-\frac{N}{\lambda}}\|u_{0}\|_{1}^{\frac{\bar p}{\lambda}}.
\]
These two estimates imply
\[
\|\tilde u(\cdot, t)\|_{1}\leq \|\tilde u(\cdot, t)\|_{\infty}\left|{\rm supp}(u(\cdot, t))\right|\leq C\, \|u_{0}\|_{1}.
\]
Observe that if also $u_{0}\geq 0$, the branch, being compactly supported, preserves the mass along the motion. Therefore, if any of the two estimates above were asymptotically (as $t\to +\infty$) less than optimal, this would contradict mass conservation.
\item
{\em Assumptions on the $p_{i}$'s.}\\
The main assumption on the parameters is the constraint
\begin{equation}
\label{pal}
p_{\rm max}:=\max\{p_{1}, \dots, p_{N}\}<\bar p\, \left(1+\frac{1}{N}\right).
\end{equation}
which is a purely anisotropic requirement, since for $p_{i}\equiv p$ the latter is always satisfied. The explicit power of $t$ in \eqref{sqw} goes to zero as $ p_{\rm max}\to \bar p(1+1/N)$, showing that condition \eqref{pal} represents an actual threshold rather than being a technical assumption. It is possible that, for $p_{\rm max}>\bar p(1+1/N)$, zero-speed of propagation occurs in the corresponding direction (see \cite[Ch. 5.3]{AS} for some instances of such phenomenon). In this regard, local boundedness of the solutions is ensured by the condition $\max\{2, p_{\rm max}\}<\bar p(1+2/N)$, hence the feasible range is $\bar p (1+1/N)<p_{\rm max}<\bar p(1+2/N)$. 
\item {\em Tools for the proof}.\\
A major tool in deriving $L^{1}-L^{\infty}$ estimates for solutions of \eqref{CaP} is a well-known anisotropic Sobolev inequality, as proved for example in Troisi \cite{T}. Unfortunately the latter alone seems to be insufficient to prove the optimal bound \eqref{sqw} and we will  need  a variant of the anisotropic Sobolev inequality in the parabolic setting involving small powers of $u$. As the function $t\mapsto |t|^{\alpha}$ is not Lipschitz for $\alpha\in \ ]0, 1[$, the Chain Rule turns out to be a quite delicate issue, however it can still be modified to prove the needed  functional inequality in Proposition  \ref{SOBE} below (see also Theorem \ref{PAS} for its parabolic counterpart). We feel that these tools can have a wide range of applications to asymptotic and/or decay estimates for anisotropic equations.

\item {\em Further investigation.}\\
Let us also mention that when $p_{i}\equiv p>2$ (homogeneous principal part) and $u\geq 0$, the thesis of our main Theorem holds directly for $u$ and there is no need to select the branch $\tilde u$. This is due to the fact that, (despite uniqueness is not guaranteed without additional monotonicity hypotheses) there are no local non-trivial and non-negative solutions with vanishing initial datum by \cite[Proposition 18.1]{DBGVbook}. Indeed, even if the equation is nonlinear, subtracting a branch to a solution still gives a solution, possessing the aforementioned properties and thus being identically zero. We were not able to prove a similar statement in the anisotropic setting, since at present a weak Harnack inequality is still missing.  

\noindent
It is worth outlining that the optimality of disturbances' velocities, discussed in point (1) above, holds only when all  the $p_{i}$'s lie in the degenerate range $p_{i}>2$. While it is possible to obtain the same velocity bound when only {\em some} of the $p_{i}$ are greater than $2$ (again, see Theorem \ref{mainteo} in the last section), it seems less likely that for any remaining choices of the powers this bound is optimal. Indeed, it could happen that in some direction the diffusion is so fast that, in order to preserve the total mass, along the remaining ones the actual speed may be slower than what \eqref{sqw} dictates.

\noindent
Finally, we did not tackle the question of extinction in finite time in the anisotropic setting, referring to \cite{AS2} for a discussion of related results.

\end{enumerate}
\vskip4pt

{\em --Outline of the paper}\\
In section \ref{section2} we gather some useful embedding of parabolic anisotropic Sobolev spaces. In section \ref{section3} we prove two types of energy inequalities and derive some contractivity estimates. Section \ref{section4} is devoted  to {\em a-priori} $L^{\infty}$-estimates, both of global and local nature. In section \ref{section5} we first prove finite speed of propagation locally in time and space, then refine it to obtain a global result for a suitable branch. The final appendix is devoted to the Tikhonov example of non-uniqueness in the quasilinear setting.

\section{Preliminaries}\label{section2}

Given ${\bf p}:=(p_{1}, \dots p_{N})\in \R^{N}$, ${\bf p}>1$, we let
\[
\bar p:= \left(\frac 1 N \sum_{i=1}^{N}\frac{1}{p_{i}}\right)^{-1},
\]
and, assuming $\bar p<N$, we define
\[
\bar p^{*}:=\frac{N\bar p}{N-\bar p}.
\]

Given $T>0$ we set $S_{T}=\R^N\times \, ]0, T[$ and for any rectangular domain $\Omega\subseteq \R^N$, $\Omega_{T}=\Omega\times \, ]0, T[$.  
 
 We define the following spaces
\[
W^{1, {\bf p}}_{0}(\Omega):=\left\{u\in W^{1,1}_{0}(\Omega):D_iu\in L^{p_{i}}(\Omega)\right\}
\]
\[
W^{1, {\bf p}}_{{\rm loc}}(\Omega):=\left\{u\in L^{1}_{{\rm loc}}(\Omega):D_iu\in L^{p_{i}}_{{\rm loc}}(\Omega)\right\}
\]
\[
L^{{\bf p}}(0, T; W^{1, {\bf p}}_{0}(\Omega)):=\left\{ u\in L^{1}(0, T; W^{1, 1}_{0}(\Omega)): D_iu\in L^{p_{i}}(\Omega_{T})\right\}
\]
\[
L^{{\bf p}}(0, T; W^{1, {\bf p}}_{{\rm loc}}(\Omega)):=\left\{ u\in L^{1}(0, T; L^{1}_{\rm loc}(\Omega)): D_iu\in L^{p_{i}}(0, T; L^{p_{i}}_{{\rm loc}}(\Omega))\right\}
\]
\[
L^{{\bf p}}_{\rm loc}(0, T; W^{1, {\bf p}}_{{\rm loc}}(\Omega)):=\left\{ u\in L^{1}_{\rm loc}(0, T; L^{1}_{\rm loc}(\Omega)): D_iu\in L^{p_{i}}_{\rm loc}(0, T; L^{p_{i}}_{{\rm loc}}(\Omega))\right\}.
\]
Let $A$ be a measurable function obeying the growth conditions \eqref{gcond}.
By a local weak solution of 
\[
u_{t}={\rm div}(A(x, u, Du))\qquad \text{in $S_{T}$}
\]
we mean a function $u\in C^{0}_{\rm loc}(0, T; L^{2}_{{\rm loc}}(\R^N))\cap L^{{\bf p}}_{\rm loc}(0, T; W^{1, {\bf p}}_{{\rm loc}}(\R^N))$ such that for any $0<t_{1}<t_{2}<T$ and any $\varphi\in C^{\infty}_{\rm loc}(0, T; C^{\infty}_{c}(\R^N))$ it holds 
\begin{equation}
\label{ws}
\left.\int u\, \varphi\, dx\right|_{t_{1}}^{t_{2}}+\int_{t_{1}}^{t_{2}}\int -u\, \varphi_{t}+A(x, u, Du)\cdot D\varphi\, dx\, dt=0,
\end{equation}
where the integral is assumed henceforth to be on $\R^N$ when no domain is specified. By density this actually holds for any $\varphi\in W^{1, 2}_{\rm loc}(0, T; L^{2}_{\rm loc}(\R^N))\cap L^{\bf p}_{\rm loc}(0, T; W^{1, {\bf p}}_{0}(\Omega))$ for any rectangular $\Omega\Subset \R^N$.

For $T<+\infty$ and an initial data $u_{0}\in L^{2}_{\rm loc}(\R^N)$, we will consider the following Cauchy problem:
\begin{equation}
\label{DP}
\begin{cases}
u_{t}={\rm div}(A(x, u, Du))&\text{in $S_{T}$},\\
u(x, 0)=u_{0}(x),&\text{$x\in \R^{N}$},
\end{cases}
\end{equation}
where $u$ is a local solution of the equation such that, in addition, $u\in C^{0}(0, T; L^{2}_{\rm loc}(\R^N))\cap L^{\bf p}(0, T; W^{1, {\bf p}}_{\rm loc}(\R^{N}))$ with $u(\cdot, 0)=u_{0}$. In other words, for $u$ to solve \eqref{DP} we require that the initial data is assumed strongly in  $L^{2}_{\rm loc}(\R^{N})$ and that $u$ has finite energy locally in space and globally in time. When $T=+\infty$, we say that $u$ is a solution of \eqref{DP} if it is so for any $T<+\infty$.

\begin{definition}
Let $u$ be a solution of \eqref{DP}. A {\em branch} $\tilde u$ of $u$ is a solution of \eqref{DP} such that $\tilde u= u$ on ${\rm supp} (\tilde u)$.
\end{definition}

For any $u$, both $0$ and $u$ itself are obvious branches, which are the {\em trivial} ones (notice that $0$ being a branch is due to the homogeneous structure assumed for the equation). Any other branch will be called {\em proper}, or {\em non trivial}.
 
\begin{remark}
\label{branch}
Examples of solutions having proper branches can be constructed simply by solving (in the finite energy space) the Cauchy problem for the $p$-Laplacian with initial datum $\varphi$ being supported in two distant balls $B_{1}$ and $B_{2}$, nontrivially in both. If $p>2$, the finite speed of propagation provides us with a solution such that, up to some time $T>0$, have two distinct branches, namely the ones starting from the initial datum $\varphi \, \chi_{B_{i}}$, $i=1,2$. 

More interesting examples are constructed in appendix \ref{sec:appA}, where it is shown that for any $\beta>0$ the slow-diffusion parabolic $p$-Laplacian equation has nontrivial solutions supported in $\{(x, t): x\ge t^{-\beta}>0\}$. The latters clearly have  zero initial datum, are sign-changing and reproduce the Tikhonov phenomenon of non-uniquess in the quasilinear setting. 

\noindent
Let now $\varphi\in C^{\infty}_{c}(\R)$ be such that ${\rm supp}(\varphi)\subseteq \{x<0\}$ and let $\tilde{v}$ be the unique energy solution of $u_{t}=\Delta_{p}u$ having $\varphi$ as initial datum. Then, by finite speed of propagation, the function
\[
v=
\begin{cases}
u&\text{on $\{x>0\}$},\\
\tilde{v}&\text{on $\{x\le 0\}$},
\end{cases}
\]
solves the equation in $\R\times\, ]0, T]$ but, despite having compactly supported initial datum, has infinite support for any $t>0$. In this case, selecting the branch $\tilde{v}$ of $v$ is necessary in order to recover finite speed of propagation of the support.
\end{remark}

The following is a useful modification of the classical anisotropic Sobolev embedding by Troisi \cite{T}.

\begin{proposition}[Sobolev embedding]\label{SOBE}
Let $\Omega\subseteq \R^N$ be a rectangular domain, $\bar p<N$ and $\alpha_{i}>0$, $i=1, \dots, N$. Then there exists a constant $C=C(N, {\bf p}, {\bf \alpha})>0$ such that
\begin{equation}
\label{ST}
\|u\|_{L^{p^{*}_{\alpha}}(\Omega)}\leq C\prod_{i=1}^{N}\|D_i|u|^{\alpha_{i}}\|_{L^{p_{i}}(\Omega)}^{\frac{1}{\tilde{\alpha}}}
\end{equation}
for all $u\in W^{1, {\bf p}}_{0}(\Omega)$, where
\[
p_{\alpha}^{*}=\bar p^{*}\frac{\tilde{\alpha}}{N},\qquad \tilde{\alpha}=\sum_{i=1}^{N}\alpha_{i}.
\]
\end{proposition}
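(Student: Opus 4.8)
The plan is to reduce \eqref{ST} to the standard Troisi anisotropic Sobolev inequality applied to the function $v$ obtained by ``gluing'' together the powers $|u|^{\alpha_i}$ in each direction. More precisely, set $\tilde\alpha=\sum_i\alpha_i$ and consider $v=|u|^{\tilde\alpha}$ (up to sign issues, which are harmless since we only deal with $|u|$). Troisi's inequality reads
\[
\|v\|_{L^{\bar p^{*}}(\Omega)}\le C\prod_{i=1}^{N}\|D_i v\|_{L^{p_i}(\Omega)}^{1/N},
\]
and since $p^{*}_{\alpha}=\bar p^{*}\tilde\alpha/N$ we have $\|u\|_{L^{p^{*}_{\alpha}}(\Omega)}=\|v\|_{L^{\bar p^{*}}(\Omega)}^{1/\tilde\alpha}$, so the left-hand side is already in the right form. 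It remains to estimate each $\|D_i v\|_{L^{p_i}}$ in terms of $\|D_i|u|^{\alpha_i}\|_{L^{p_i}}$. Formally, $D_i|u|^{\tilde\alpha}=\frac{\tilde\alpha}{\alpha_i}|u|^{\tilde\alpha-\alpha_i}D_i|u|^{\alpha_i}$, so that
\[
\|D_i v\|_{L^{p_i}(\Omega)}=\frac{\tilde\alpha}{\alpha_i}\big\||u|^{\tilde\alpha-\alpha_i}\,D_i|u|^{\alpha_i}\big\|_{L^{p_i}(\Omega)}.
\]
Raising to the power $1/N$, multiplying over $i$, and then to the power $1/\tilde\alpha$, one sees that the target inequality \eqref{ST} would follow provided one can absorb the extra factors $|u|^{\tilde\alpha-\alpha_i}$; the exponents are chosen exactly so that, after a Hölder argument distributing the total power $|u|^{(N-1)\tilde\alpha}$ appearing in the product $\prod_i |u|^{\tilde\alpha-\alpha_i}$ over the $L^{p^{*}_{\alpha}}$ norm of $u$, everything closes. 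I would make this bookkeeping precise by taking logarithms of the product and checking the exponents sum correctly, or more cleanly by first proving the inequality when $\Omega$ is a cube and all norms are finite, then arguing by scaling/approximation.

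The key steps, in order, are: (i) reduce to $u\ge 0$, $u\in C^\infty_c(\Omega)$ (or $W^{1,{\bf p}}_0$ bounded), the general case following by truncation $u\mapsto\min(|u|,k)$, monotone convergence and a density argument — here one must be slightly careful that $|u|^{\alpha_i}\in W^{1,p_i}$ when $\alpha_i<1$, which is where the delicate chain rule alluded to in the introduction enters; (ii) establish the chain rule $D_i|u|^{\tilde\alpha}=\frac{\tilde\alpha}{\alpha_i}|u|^{\tilde\alpha-\alpha_i}D_i|u|^{\alpha_i}$ in the weak sense, again via approximation of $t\mapsto t^{\tilde\alpha/\alpha_i}$ by $C^1$ functions and passing to the limit, using that $\tilde\alpha/\alpha_i\ge 1$; (iii) apply Troisi's inequality to $v=|u|^{\tilde\alpha}$; (iv) bound $\|D_iv\|_{L^{p_i}}$ via Hölder with exponents matching $p_i$ and $p^{*}_{\alpha}/p_i\cdot$(something), collecting the powers of $\|u\|_{L^{p^{*}_{\alpha}}}$; (v) multiply the resulting inequalities and solve for $\|u\|_{L^{p^{*}_{\alpha}}}$, checking that the exponent of this norm on the right is strictly less than $1$ so it can be moved to the left — this last point is again guaranteed by the definition of $p^{*}_{\alpha}$ and the hypothesis $\bar p<N$.

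The main obstacle I anticipate is not the algebra of exponents (which is forced and can be verified mechanically) but rather the rigorous justification of step (ii), the chain rule for $|u|^{\alpha}$ with $0<\alpha<1$: such powers are only Hölder, not Lipschitz, near zero, so $|u|^{\alpha_i}$ need not a priori lie in any Sobolev space, and one cannot blindly differentiate. The safe route is to work with the regularized powers $(u^2+\eps^2)^{\alpha_i/2}-\eps^{\alpha_i}$, which are smooth, establish all identities and inequalities for these, and only at the end let $\eps\to 0$, using dominated convergence together with the assumption that the right-hand side of \eqref{ST} is finite (otherwise there is nothing to prove). One subtlety: the constant $C$ must come out depending only on $N,{\bf p}$ and the $\alpha_i$, with no dependence on $\Omega$, which a careful scaling argument on the cube delivers since Troisi's constant is scale-invariant in the right sense.
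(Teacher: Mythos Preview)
Your plan has a genuine gap at step (iv), and there is also an algebraic slip earlier that signals the trouble. With $v=|u|^{\tilde\alpha}$ one has $\|v\|_{L^{\bar p^*}}^{1/\tilde\alpha}=\|u\|_{L^{\tilde\alpha\bar p^*}}=\|u\|_{L^{Np^*_\alpha}}$, not $\|u\|_{L^{p^*_\alpha}}$; the exponents differ by a factor $N$. One might hope to repair this by taking $v=|u|^{\tilde\alpha/N}$ instead, but then the chain rule gives $D_iv=c_i\,|u|^{\tilde\alpha/N-\alpha_i}D_i|u|^{\alpha_i}$ with a \emph{negative} power of $|u|$ whenever $\alpha_i$ exceeds the average $\tilde\alpha/N$, so this only works when all $\alpha_i$ are equal. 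The deeper obstruction is the H\"older step itself: after writing $D_iv=c_i\,|u|^{\gamma-\alpha_i}D_i|u|^{\alpha_i}$ for any choice of $\gamma$, you need to bound $\big\||u|^{\gamma-\alpha_i}D_i|u|^{\alpha_i}\big\|_{L^{p_i}}$ by a product involving $\|D_i|u|^{\alpha_i}\|_{L^{p_i}}$. Any H\"older splitting with conjugate exponents $(r,r')$, $r'>1$, sends the derivative factor into $L^{p_ir'}$, which is not controlled; taking $r'=1$ forces $r=\infty$, and the resulting $\|u\|_\infty^{\gamma-\alpha_i}$ factors do not cancel across the product over $i$. The phrase ``distributing the total power $|u|^{(N-1)\tilde\alpha}$'' is precisely what cannot be done here, because those powers sit inside \emph{separate} $L^{p_i}$ integrals, not a single one.

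The paper does not invoke Troisi as a black box; it redoes the Gagliardo successive-integration argument with a free parameter $s_i\ge 1$ in each coordinate. Starting from the one-dimensional bound $v_i^{s_i}(x)\le s_i\int_{\R_i}v_i^{s_i-1}|D_iv_i|$ with $v_i=|u|^{\alpha_i}$, H\"older \emph{on the line} (which naturally keeps $D_iv_i$ in $L^{p_i}$ of that line) and then the iterated integration yield an inequality with $N+1$ free parameters $s_1,\dots,s_N,q$; these are finally determined by a linear system forcing all the $|u|$-exponents to equal $q=p^*_\alpha$. Applying Troisi to a single $v=|u|^{\gamma}$ amounts to fixing $\alpha_is_i\equiv\gamma$, which over-constrains this system. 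Your instinct that the chain rule for $|u|^{\alpha}$ with $\alpha<1$ is the delicate analytic point is correct and matches the paper's emphasis, but the ``mechanical'' bookkeeping you defer is exactly where the black-box reduction breaks down.
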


\begin{proof}
We can suppose that the right-hand side is finite and, by a monotone convergence argument, that $u$ and $\Omega$ are bounded.

First we claim that when $\alpha>0$, $p\geq 1$ and $u\in W^{1,1}_{0}(\Omega)\cap L^{\infty}(\Omega)$ it holds
\begin{equation}
\label{dalpha}
\int |D_{i}|u|^{\alpha}|^{p}\, dx=\alpha\, \int ||u|^{\alpha-1}D_{i} |u||^{p}\, dx,
\end{equation}
whenever the left-hand side is finite, where we set
\begin{equation}
\label{conv}
|u|^{\alpha-1}D_{i} |u|=0\quad \text{a.e. on $D_{i}u=0$}.
\end{equation}
This readily follows when $\alpha\geq 1$ from the boundedness of $u$ and the classical Chain Rule for Sobolev functions. In the case $\alpha\in \ ]0, 1[$ first observe that, since the left-hand side of \eqref{dalpha} is finite, $D_{i}|u|^{\alpha}\equiv 0\equiv  D_{i}u$ a.e. on $\{u=0\}$. Therefore it suffices to show that the weak derivatives of $|u|^{\alpha}$ and $|u|$ obey
\begin{equation}
\label{dalpha2}
D_{i}|u|^{\alpha}=\alpha \, |u|^{\alpha-1}\,  D_{i}|u|,\qquad \text{a.e. on $\{u\neq 0\}$}.
\end{equation}
By the absolute continuity on lines  characterization of Sobolev functions \cite[Theorem 10.35]{L}, we can choose a representative such that for a.e. line $l$ parallel to $e_{i}$, {\em both}\footnote{That this is true follows by suitably modifying the proof of \cite[Theorem 10.35]{L}: it suffices to start with $v_{n}\in C^{\infty}_{c}(\R^N)$ such that $D_{i}v_{n}\to D_{i}|u|^{\alpha}$ in $L^{p}(\R^N)$ and then observe that $D_{i}v_{n}^{1/\alpha}\to D_{i}|u|$ in $L^{p}(\R^N)$ as well, by the ${\rm Lip}_{\rm loc}$ character of $t\mapsto |t|^{1/\alpha}$.} the restrictions of $|u|^{\alpha}$ and $|u|$ belong to $AC(l)$, with corresponding (classical) derivatives coinciding ${\mathcal L}^{1}$ a.e. with the $i$-th weak derivative. Moreover, $|u|^{\alpha}=f(|u|)$ with $f(t):=|t|^{\alpha}\in AC_{\rm loc}(\R)$ mapping null-measure sets to  null-measure sets. Therefore \cite[Theorem 3.44]{L} applies, giving, for ${\mathcal L}^{N-1}$ a.e. $x$ such that $x_{i}=0$
\[
\frac{d}{dt} |u|^{\alpha}(x+t\, e_{i})=\alpha |u|^{\alpha-1}(x'+te_{i})\, \frac{d}{dt} |u|(x+t\, e_{i})
\]
for ${\mathcal L}^{1}$ a.e. $t$ such that $u(x+te_{i})\neq 0$. By Fubini's theorem and the absolute continuity on lines  characterization of Sobolev functions, this shows \eqref{dalpha2} and thus \eqref{dalpha}.

Let us fix from now on a representative for which $|u|^{\alpha}\in AC(l)$ for almost every line parallel to $e_{i}$, and define 
$v_{i}=|u|^{\alpha_{i}}$. By the previous discussion, the classical partial derivative and the weak derivative coincide ${\mathcal L}^{1}$ a.e. on any line, with \eqref{dalpha2} holding as long as we assume \eqref{conv}.
For $i=1, \dots, N$ denote
\[
\int_{\R_{i}}v(x)\, dt=\int_{\R}v(x+te_{i})\, dt.
\]
Consider $N$ parameters $s_{i}\geq 1$, $i=1, \dots, N$ and observe that for any $i=1, \dots, N$ and a.e. $x\in \R^N$
\[
v_{i}^{s_{i}}(x)\leq s_{i}\int_{\R_{i}}v_{i}^{(s_{i}-1)}(x)|D_{i} v_{i}|(x)\, dt\leq s_{i}\left(\int_{\R_{i}}v_{i}^{(s_{i}-1)\, p_{i}'}(x)\, dt\right)^{\frac{1}{p_{i}'}}\left(\int_{\R_{i}}|D_{i}v_{i}|^{p_{i}}(x)\, dt\right)^{\frac{1}{p_{i}}}.
\]
Therefore
\[
\prod_{i=1}^{N} v_{i}^{\frac{s_{i}}{N-1}}(x)\leq C\prod_{i=1}^{N}\left(\int_{\R_{i}}v_{i}^{(s_{i}-1)\, p_{i}'}(x)\, dt\right)^{\frac{1}{(N-1)\, p_{i}'}}\left(\int_{\R_{i}}|D_{i}v_{i}|^{p_{i}}(x)\, dt\right)^{\frac{1}{(N-1)\, p_{i}}}
\]
We successively integrate with respect to each variable $x_{j}$, observing that, at each step, the $j$-th factor of the product in the right-hand side does not depend on $x_{j}$. Regarding the other factors, it holds
\[
\sum_{i\neq j}\frac{1}{(N-1)\, p_{i}'}+\frac{1}{(N-1)\, p_{i} }=1,
\]
 so that at each step we can apply H\"older's inequality to obtain after $N$ integrations
\[
 \int \prod_{i=1}^{N} v_{i}^{\frac{s_{i}}{N-1}}\, dx\leq C\prod_{i=1}^{N}\left(\int v_{i}^{(s_{i}-1)\, p_{i}'}\, dx\right)^{\frac{1}{(N-1)\, p_{i}'}}\left(\int |D_{i}v_{i}|^{p_{i}}\, dx\right)^{\frac{1}{(N-1)\, p_{i}}}.
\]
i.e.
\[
 \int |u|^{\frac{1}{N-1}\sum_{i=1}^{N}\alpha_{i}\, s_{i}}\, dx\leq C\prod_{i=1}^{N}\left(\int |u|^{\alpha_{i}\, (s_{i}-1)\, p_{i}'}\, dx\right)^{\frac{1}{(N-1)\, p_{i}'}}\left(\int |D_{i}|u|^{\alpha_{i}}|^{p_{i}}\, dx\right)^{\frac{1}{(N-1)\, p_{i}}}.
 \]
Finally, the system in the $N+1$ variables $s_{j}, q$ 
 \[
 \alpha_{j}\, (s_{j}-1)\,  p_{j}'=q,\quad j=1,\dots, N, \qquad \frac{1}{N-1}\sum_{i=1}^{N}\alpha_{i}\, s_{i}=q
 \]
is solvable with  $q=p_{\alpha}^{*}$ (and $s_{j}\geq 1$), so that simple algebraic manipulations lead to \eqref{ST}.
\end{proof}
  
When $\alpha_{i}\equiv 1$ we let $p^{*}_{{\bf \alpha}}=\bar p^{*}$.

\begin{theorem}[Parabolic Anisotropic Sobolev embedding]
\label{PAS}
Let $\Omega\subseteq \R^N$ be a rectangular domain, $\bar p<N$, $\alpha_{i}>0$ for $i=1,\dots, N$ and $\sigma\in [1, p_{\alpha}^{*}]$. For any $\theta\in [0, \frac{\bar p}{ \bar p^{*}}]$ define 
\[
q=q(\theta, {\bf p}, {\bf \alpha})=\theta \,  p^{*}_{\alpha}+\sigma\, (1-\theta).
\]
Then there exists a constant $C=C(N, {\bf p}, {\bf \alpha}, \theta, \sigma)>0$ such that
\begin{equation}
\label{PS}
\iint_{\Omega_{T}}|u|^{q}\, dx\, dt\leq C\, T^{1-\theta\, \frac{\bar p^{*}}{\bar p}}\left(\sup_{t\in [0, T]}\int_{\Omega}|u|^{\sigma}(x, t)\, dx\right)^{1-\theta}\prod_{i=1}^{N}\left(\iint_{\Omega_{T}}|D_i|u|^{\alpha_{i}}|^{p_{i}}\, dx\, dt\right)^{\frac{\theta\,  \bar p^{*}}{N \, p_{i}}}
\end{equation} 
for any $u\in L^{1}(0, T; W^{1,1}_{0}(\Omega))$.
\end{theorem}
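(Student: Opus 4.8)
The plan is to derive \eqref{PS} from the elliptic Sobolev inequality \eqref{ST} of Proposition \ref{SOBE} by interpolation in the space variable followed by integration in time. First I would fix $t\in[0,T]$ and apply \eqref{ST} to $u(\cdot,t)\in W^{1,\mathbf p}_0(\Omega)$, obtaining
\[
\|u(\cdot,t)\|_{L^{p^*_\alpha}(\Omega)}\leq C\prod_{i=1}^N\|D_i|u|^{\alpha_i}(\cdot,t)\|_{L^{p_i}(\Omega)}^{1/\tilde\alpha}.
\]
Next, for the fixed time slice I would interpolate the $L^q$ norm between $L^\sigma$ and $L^{p^*_\alpha}$. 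Since $q=\theta p^*_\alpha+\sigma(1-\theta)$ is the ``arithmetic'' interpolation exponent rather than the usual harmonic one, the correct move is to write $|u|^q=|u|^{\theta p^*_\alpha}\,|u|^{\sigma(1-\theta)}$ and apply H\"older's inequality with exponents $1/\theta$ and $1/(1-\theta)$ (using $\theta\in[0,\bar p/\bar p^*]\subseteq[0,1]$), which yields
\[
\int_\Omega|u|^q(x,t)\,dx\leq\left(\int_\Omega|u|^{p^*_\alpha}(x,t)\,dx\right)^{\theta}\left(\int_\Omega|u|^{\sigma}(x,t)\,dx\right)^{1-\theta}.
\]
Combining the two displays, the first factor is $\big(\|u(\cdot,t)\|_{L^{p^*_\alpha}}\big)^{\theta p^*_\alpha}\le C\prod_i\|D_i|u|^{\alpha_i}(\cdot,t)\|_{p_i}^{\theta p^*_\alpha/\tilde\alpha}$, and since $p^*_\alpha=\bar p^*\tilde\alpha/N$ the exponent $\theta p^*_\alpha/\tilde\alpha$ equals $\theta\bar p^*/N$, matching the target.

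Then I would integrate in $t$ over $[0,T]$. Bounding the $L^\sigma$ slice by its supremum pulls out the factor $\big(\sup_{t}\int_\Omega|u|^\sigma(x,t)\,dx\big)^{1-\theta}$, leaving
\[
\iint_{\Omega_T}|u|^q\,dx\,dt\leq C\Big(\sup_{t\in[0,T]}\int_\Omega|u|^\sigma\,dx\Big)^{1-\theta}\int_0^T\prod_{i=1}^N\|D_i|u|^{\alpha_i}(\cdot,t)\|_{L^{p_i}(\Omega)}^{\theta\bar p^*/N}\,dt.
\]
The exponents on the gradient terms sum to $\sum_i\theta\bar p^*/N=\theta\bar p^*/\bar p$ (using $\sum_i 1/p_i=N/\bar p$). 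If $\theta\bar p^*/\bar p=1$, i.e. $\theta=\bar p/\bar p^*$, this time integral is handled by the generalized H\"older inequality with the $N$ exponents $r_i=p_i\bar p^*/(\theta\bar p^* )\cdot(1/N)\cdots$ — more precisely one uses $\|\cdot\|_{L^1_t}$ H\"older with exponents $p_i N/(\theta\bar p^*)$ whose reciprocals sum to $\theta\bar p^*/\bar p=1$ — converting $\int_0^T\prod_i(\cdots)\,dt$ into $\prod_i\big(\iint_{\Omega_T}|D_i|u|^{\alpha_i}|^{p_i}\big)^{\theta\bar p^*/(Np_i)}$, which is exactly the right-hand side of \eqref{PS} with $T^{1-\theta\bar p^*/\bar p}=T^0=1$. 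For general $\theta\in[0,\bar p/\bar p^*)$ the reciprocals sum to $\theta\bar p^*/\bar p<1$, so one first applies H\"older in time with one extra factor equal to the constant function $1$ (contributing $T^{1-\theta\bar p^*/\bar p}$) and then the multi-factor H\"older as before; this produces precisely the stated power of $T$.

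The main obstacle I anticipate is purely bookkeeping: making sure the various exponents are admissible — $1/\theta,1/(1-\theta)\ge1$ requires $\theta\in[0,1]$, which holds since $\theta\le\bar p/\bar p^*<1$; the time-H\"older exponents $Np_i/(\theta\bar p^*)$ together with the leftover $1$-exponent must have reciprocals summing to $1$, which is the identity $\theta\bar p^*/\bar p+(1-\theta\bar p^*/\bar p)=1$; and one must track that $\theta p^*_\alpha/\tilde\alpha=\theta\bar p^*/N$. A minor technical point is the regularity needed to apply \eqref{ST} slicewise: \eqref{ST} requires $u(\cdot,t)\in W^{1,\mathbf p}_0(\Omega)$, whereas the hypothesis only gives $u\in L^1(0,T;W^{1,1}_0(\Omega))$ with $D_i|u|^{\alpha_i}\in L^{p_i}(\Omega_T)$; one reduces to the good case by a density/truncation argument (as in the proof of Proposition \ref{SOBE}, replacing $u$ by its truncations at level $k$ and invoking monotone convergence), and if the right-hand side of \eqref{PS} is infinite there is nothing to prove. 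No single step is deep; the content is the interplay of the elliptic embedding with the $L^\infty_t L^\sigma_x$ energy bound, which is the standard Gagliardo–Nirenberg–type parabolic interpolation adapted to the anisotropic exponents.
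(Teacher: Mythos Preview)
Your proposal is correct and follows essentially the same route as the paper: slicewise H\"older interpolation between $L^\sigma$ and $L^{p^*_\alpha}$, the elliptic embedding \eqref{ST}, pulling out the $\sup_t\|u(\cdot,t)\|_\sigma$ factor, and then a multi-factor H\"older in time with the leftover exponent producing $T^{1-\theta\bar p^*/\bar p}$. The only cosmetic difference is that the paper interpolates first and applies \eqref{ST} second, whereas you reverse the order; the bookkeeping you flag as the main obstacle (in particular the line ``the exponents on the gradient terms sum to $\sum_i\theta\bar p^*/N=\theta\bar p^*/\bar p$'') is slightly garbled---what actually sums to $\theta\bar p^*/\bar p$ are the reciprocals $\theta\bar p^*/(Np_i)$ of the time-H\"older exponents---but your parenthetical makes clear you have the right identity in mind.
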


\begin{proof}
For a.e.  $t\in [0, T]$, apply H\"older's inequality as 
\[
\int |u|^{q}(x, t)\, dx\leq \left(\int |u|^{\sigma}(x, t)\, dx\right)^{1-\theta}\left(\int u^{p_{\alpha}^{*}}(x, t)\, dx\right)^{\theta}
\]
and through \eqref{ST} deduce
\[
\int |u|^{q}(x, t)\, dx\leq C\left(\int |u|^{\sigma}(x, t)\, dx\right)^{1-\theta}\prod_{i=1}^{N}\left(\int |D_i|u|^{\alpha_{i}}|^{p_{i}}(x, t)\, dx\right)^{\frac{ \theta\, \bar p^{*}}{N \, p_{i}}}.
\]
Finally we integrate in the $t$-variable and use H\"older's inequality in time as
\[
\int_{0}^{T}\prod_{i=0}^{N}f_{i}^{\gamma_{i}}\, dt\leq T^{1-\frac{1}{r}}\sup_{t\in [0, T]} f_{0}^{\gamma_{0}}\left(\int_{0}^{T}\prod_{i=1}^{N} f_{i}^{\gamma_{i}\, r}\, dt\right)^{\frac{1}{r}}\leq T^{1-\frac{1}{r}}\sup_{t\in [0, T]} f_{0}^{\gamma_{0}}\prod_{i=1}^{N}\left(\int_{0}^{T} f_{i}\, dt\right)^{\gamma_{i}}
\]
valid for
\[
\sum_{i=1}^{N}r\, \gamma_{i}=1, \qquad r\geq 1,\qquad \gamma_{i}>0.
\]
In the previous formula we set
\[
\gamma_{0}=1-\theta, \quad f_{0}(t)=\int |u|^{\sigma}(x, t)\, dx
\]
and for $i=1,\dots, N$
\[
\gamma_{i}=\frac{ \theta\, \bar p^{*}}{ N\, p_{i}},\quad r=\frac{\bar p}{\bar p^{*}}\frac{1}{\theta}\geq 1,\quad f_{i}(t)=\int |D_i|u|^{\alpha_{i}}|^{p_{i}}(x, t)\, dx
\]
to get the claim.
\end{proof}

\begin{remark}
In the isotropic case, the previous theorem ensures an analogous local summability estimate without the assumption that $u$ vanishes outside $\Omega$, just by adding an $L^{1}$ term to the right-hand side. Unfortunately, this is no longer true in the anisotropic setting, and counter-examples to the corresponding embeddings are known (see \cite{KK, HS} in the non-parabolic case). 
More precisely, under the previous assumptions on the parameters, it may happen that $L^{\bf p}(0, T; W^{1, {\bf p}}_{\rm loc}(\R^{N}))\cap L^{\infty}(0, T; L^{\sigma}_{\rm loc}(\R^{N}))$ fails to be contained in $L^{q}(0, T; L^{q}_{\rm loc}(\R^{N}))$ for $q$ given above.
In order to remove the zero boundary condition,  one is either forced to assume {\em a-priori} a suitable degree of summability, or to further constrain the location of the $p_{i}$'s.
 \end{remark}
 
The following result deals with the problem mentioned in the previous remark when $\alpha_{i}\equiv 1$.

\begin{theorem}\label{corPAS}
Let $1\leq p_{1}\leq \dots\leq  p_{N}$, $\bar p<N$ and for any $\sigma\in [1, \bar p^{*}]$ define the critical parabolic exponent
\[
\bar p_{\sigma}=\bar p\, \left(1+\frac{\sigma}{N}\right).
\]
Then the embedding
\begin{equation}
\label{emb0}
 L^{p_{N}}(0, T; L^{p_{N}}_{\rm loc}(\R^{N}))\cap L^{\bf p}(0, T; W^{1, {\bf p}}_{\rm loc}(\R^{N}))\cap L^{\infty}(0, T; L^{\sigma}_{\rm loc}(\R^{N}))\hookrightarrow L^{\bar p_{\sigma}}(0, T; L^{\bar p_{\sigma}}_{\rm loc}(\R^{N}))
 \end{equation}
holds true. Moreover, under the assumption
\begin{equation}
\label{assump}
\bar p_{\sigma}>p_{N}=\max\{p_{1},\dots, p_{N}\}
\end{equation}
we directly have
\[
L^{\bf p}(0, T; W^{1, {\bf p}}_{\rm loc}(\R^{N}))\cap L^{\infty}(0, T; L^{\sigma}_{\rm loc}(\R^{N}))\hookrightarrow L^{\bar p_{\sigma}}(0, T; L^{\bar p_{\sigma}}_{\rm loc}(\R^{N})).
\]
\end{theorem}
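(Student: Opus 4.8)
The plan is to extract both embeddings from Theorem \ref{PAS}, specialised to $\alpha_i\equiv 1$ and to the endpoint exponent $\theta=\bar p/\bar p^{*}$. With this choice one has $1-\theta=\bar p/N$, hence $q=\theta\,\bar p^{*}+\sigma(1-\theta)=\bar p+\sigma\bar p/N=\bar p_\sigma$; moreover $1-\theta\,\bar p^{*}/\bar p=0$, so the time factor disappears, and the gradient exponents become $\theta\,\bar p^{*}/(Np_i)=\bar p/(Np_i)=:\gamma_i$. Thus Theorem \ref{PAS} will provide, for every rectangular $Q$ and every $v\in L^{1}(0,T;W^{1,1}_{0}(Q))$ with finite right-hand side,
\begin{equation}
\label{globalemb}
\iint_{Q_T}|v|^{\bar p_\sigma}\,dx\,dt\le C\Big(\sup_{0<t<T}\int_{Q}|v|^{\sigma}(x,t)\,dx\Big)^{\bar p/N}\prod_{i=1}^{N}\Big(\iint_{Q_T}|D_iv|^{p_i}\,dx\,dt\Big)^{\gamma_i}.
\end{equation}
I record two elementary facts to be used below: $\sum_i\gamma_i=1$ and $\gamma_i\le 1$ for every $i$ (because $\bar p\sum_j 1/p_j=N$ and $p_i\sum_j 1/p_j\ge 1$), and $\bar p/\bar p_\sigma=N/(N+\sigma)<1$.

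For the first embedding I would localise \eqref{globalemb}. Fix rectangles $Q'\Subset Q$ and a product cut-off $\zeta(x)=\prod_i\zeta_i(x_i)\in C^{\infty}_c(Q)$, $0\le\zeta\le 1$, $\zeta\equiv 1$ on $Q'$, $|\zeta_i'|\le C$, and apply \eqref{globalemb} to $v=\zeta u$. Using $|v|\le|u|$ and $|D_i(\zeta u)|\le|D_iu|+C|u|$ pointwise on $Q$, this gives
\[
\iint_{Q'_T}|u|^{\bar p_\sigma}\,dx\,dt\le C\Big(\sup_{t}\int_{Q}|u|^{\sigma}\Big)^{\bar p/N}\prod_{i=1}^{N}\Big(\iint_{Q_T}|D_iu|^{p_i}+C\iint_{Q_T}|u|^{p_i}\Big)^{\gamma_i}.
\]
Since $p_i\le p_N$ and $|Q_T|<\infty$, one has $|u|^{p_i}\le|u|^{p_N}+1$, so each $\iint_{Q_T}|u|^{p_i}$ is controlled by $\|u\|_{L^{p_N}(Q_T)}^{p_N}+|Q_T|$; as $u$ lies in the three spaces on the left of \eqref{emb0}, the right-hand side is finite, which is \eqref{emb0}.

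For the second embedding the hypothesis $u\in L^{p_N}_{\rm loc}(L^{p_N}_{\rm loc})$ is dropped, so the terms $\iint_{Q_T}|u|^{p_i}$ produced above are no longer a priori finite; this is the main obstacle, and I would overcome it by combining truncation with an iteration over nested cubes. For each $k\ge 1$ set $u_k:=\min\{k,\max\{-k,u\}\}$, so that $|u_k|\le|u|$, $|D_iu_k|\le|D_iu|$ a.e.\ and, being bounded, $u_k\in L^{p_N}_{\rm loc}(L^{p_N}_{\rm loc})$. Fix a large rectangle $\widetilde Q$, a point $x_0$, and for $\rho$ in a bounded range let $Q_\rho\subset\widetilde Q$ be the cube centred at $x_0$ of half-side $\rho$; put $\phi_k(\rho):=\iint_{(Q_\rho)_T}|u_k|^{\bar p_\sigma}$, $M:=\sup_t\int_{\widetilde Q}|u|^{\sigma}$ and $\mathcal E_i:=\iint_{\widetilde Q_T}|D_iu|^{p_i}$, the last two being finite and independent of $k$. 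Applying the cut-off estimate above to $u_k$ on a pair $Q_r\Subset Q_R$ (so $|D_i\zeta|\le C/(R-r)$) gives
\[
\phi_k(r)\le C\,M^{\bar p/N}\prod_{i=1}^{N}\Big(\mathcal E_i+\frac{C}{(R-r)^{p_i}}\iint_{(Q_R)_T}|u_k|^{p_i}\Big)^{\gamma_i}.
\]
Because $p_i\le p_N<\bar p_\sigma$ — this is exactly where hypothesis \eqref{assump} is used — Hölder's inequality on the finite-measure set $(Q_R)_T$ yields $\iint_{(Q_R)_T}|u_k|^{p_i}\le C\,\phi_k(R)^{p_i/\bar p_\sigma}$. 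Distributing the powers $\gamma_i\le 1$ over the sums, expanding the product (note $\gamma_ip_i=\bar p/N$ for every $i$), and observing that in every resulting term $\phi_k(R)$ carries an exponent at most $\sum_i\gamma_ip_i/\bar p_\sigma=\bar p/\bar p_\sigma<1$, Young's inequality produces, for a suitable $\gamma>0$ and a constant $C$ independent of $k$ and of $r<R$,
\[
\phi_k(r)\le\tfrac12\,\phi_k(R)+\frac{C}{(R-r)^{\gamma}},\qquad \rho_0\le r<R\le\rho_1.
\]

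The truncation is what makes this usable: $\phi_k$ is bounded on $[\rho_0,\rho_1]$ because $|u_k|\le k$, so the standard iteration lemma for nonnegative functions (if $\psi\ge 0$ is bounded on $[\rho_0,\rho_1]$ and $\psi(r)\le\tfrac12\psi(R)+A(R-r)^{-\gamma}+B$ for $\rho_0\le r<R\le\rho_1$, then $\psi(\rho_0)\le c_\gamma\big(A(\rho_1-\rho_0)^{-\gamma}+B\big)$) applies and yields $\phi_k(\rho_0)\le C$ with $C$ independent of $k$. Letting $k\to\infty$, monotone convergence gives $\iint_{(Q_{\rho_0})_T}|u|^{\bar p_\sigma}\,dx\,dt<\infty$; since $x_0$, $\rho_0$ and $T<\infty$ are arbitrary, $u\in L^{\bar p_\sigma}(0,T;L^{\bar p_\sigma}_{\rm loc}(\R^{N}))$. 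The hard part is precisely the absorption just carried out: a single cut-off estimate returns $\iint|u|^{\bar p_\sigma}$ over a \emph{larger} rectangle on the right, so the continuum of nested cubes — together with the a priori finiteness supplied by the truncation — is genuinely needed, rather than a one-step argument.
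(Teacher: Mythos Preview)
Your proof is correct. The first embedding is handled exactly as in the paper: apply Theorem~\ref{PAS} with $\alpha_i\equiv 1$, $\theta=\bar p/\bar p^{*}$ to $\zeta u$, expand $|D_i(\zeta u)|^{p_i}$, and control $\iint|u|^{p_i}$ by the $L^{p_N}$--norm via H\"older on the bounded set.

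For the second embedding your route differs substantially from the paper's. The paper runs a bootstrap on the \emph{exponents}: it starts from the isotropic vector ${\bf p}^{1}=(p_1,\dots,p_1)$, applies the first embedding to gain integrability up to some $q^{1}$, sets ${\bf p}^{n+1}={\bf p}^{n}\wedge(q^{n},\dots,q^{n})$, and proves (by a contradiction argument on the monotone limit of $\{q^{n}\}$) that under \eqref{assump} one reaches $q^{n}\ge p_N$ in finitely many steps, at which point the first embedding closes the argument. You instead truncate and run a Giaquinta-type absorption iteration in the \emph{spatial radius}: the cut-off estimate returns $\phi_k(r)$ in terms of $\phi_k(R)^{\alpha}$ with $\alpha\le\bar p/\bar p_\sigma<1$, Young's inequality absorbs, and the standard hole-filling lemma (whose applicability is precisely what the truncation guarantees) gives a $k$-independent bound. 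Both arguments hinge on the same strict inequality $\bar p/\bar p_\sigma<1$ coming from \eqref{assump}, but exploit it differently; your approach is the one familiar from elliptic regularity theory and avoids the combinatorial analysis of the exponent sequence, while the paper's bootstrap is closer in spirit to how anisotropic Sobolev embeddings are typically improved step by step. One small point: your displayed iteration inequality omits the additive constant coming from the term $M^{\bar p/N}\prod_i\mathcal E_i^{\gamma_i}$ in the expansion of the product, but since you quote the lemma with the ``$+B$'' term (and since $R-r$ is bounded) this is harmless.
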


\begin{proof}
Apply the previous proposition with $\alpha_{i}\equiv 1$, $\theta=\bar p/\bar p^{*}$ to $u\, \eta$ for $\eta\in C^{\infty}_{c}(\R^{N})$, $\eta\equiv 1$ on an arbitrary cube $K\subseteq \R^{N}$. With the notations of our statement, it holds $q=\bar p_{\sigma}$ and we get
\[
\int_{0}^{T}\int_{K} |u|^{\bar p_{\sigma}}\, dx\, dt\leq \left(\sup_{t\in [0, T]}\int |u\, \eta|^{\sigma}(x, t)\, dx\right)^{1-\theta}\prod_{i=1}^{N}\left(\int_{0}^{T}\int|D_{i} (u\, \eta)|^{p_{i}}\, dx\, dt\right)^{\frac{\bar p}{N\, p_{i}}}.
\]
The first factor on the right is finite by assumption, while for the others we have
\[
\int_{0}^{T}\int|D_{i} (u\, \eta)|^{p_{i}}\, dx\, dt\le C\int_{0}^{T}\int_{K'}|D_{i} u|^{p_{i}}+|u|^{p_{i}}\, dx\, dt,
\]
where $K'$ is another cube such that ${\rm supp}(\eta)\subseteq K'$ and $C=C(\eta)$. {\hl To prove the first embedding \eqref{emb0}, it suffices to observe that, by the ordering of the $p_{i}$ and H\"older's inequality,  $L^{p_{i}}(0, T; L^{p_{i}}_{\rm loc}(\R^{N}))\hookrightarrow L^{p_{N}}(0, T; L^{p_{N}}_{\rm loc}(\R^{N}))$ for any $i=1, \dots, N$.} \\
Let us prove the second embedding under assumption \eqref{assump}. 
On vectors ${\bf q}\in \R^{N}$ we consider the component-wise partial ordering 
\[
(q_{1}, \dots, q_{N})\geq (r_{1}, \dots, r_{N})\quad \Leftrightarrow\quad q_{i}\geq r_{i}\quad \text{for all $i=1, \dots, N$}.
\]
In turn, this naturally defines a lattice structure on $\R^{N}$, with $\land$ denoting the minimum operation. By abuse of notation we will say that, for $\lambda\in \R$ it holds ${\bf q}\geq \lambda$ to mean ${\bf q}\geq (\lambda, \dots, \lambda)$.

\noindent
Define by recursion the following sequence ${\bf p}^{n}=(p_{1}^{n}, \dots, p_{N}^{n})\in \R^{N}$:
\[
\begin{cases}
{\bf p}^{1}=(p_{1}, \dots, p_{1})\\
{\bf p}^{n+1}= {\bf p}^{n}\land (q^{n}, \dots, q^{n}),\qquad q^{n}:=\frac{N+\sigma}{\sum_{1}^{N}\frac{1}{p^{n}_{i}}}.
\end{cases}
\]
 We claim that if \eqref{assump} holds, then
\begin{equation}
\label{claim4}
q^{n}\geq p_{N} \qquad \text{for sufficiently large $n$}.
\end{equation}
Let us postpone the proof of \eqref{claim4} momentarily and show how this implies $u\in L^{p_{N}}(0, T; L^{p_{N}}_{\rm loc}(\R^{N}))$.
For $n=1$ the standard isotropic parabolic Sobolev embedding ensures that  $u\in L^{q^{1}}(0, T; L^{q^{1}}_{\rm loc}(\R^{N}))$. Suppose that $u\in L^{q^{n}}(0, T; L^{q^{n}}_{\rm loc}(\R^{N}))$ for some $q_{n}\leq p_{N}$. Then, since ${\bf p}^{n+1}\leq {\bf p}$, H\"older's inequality ensures $u\in L^{{\bf p}^{n+1}}(0, T; W^{1, {\bf p}^{n+1}}_{\rm loc}(\R^{N}))$ and   the embedding \eqref{emb0} with vector ${\bf p}^{n+1}$ implies $u\in L^{q^{n+1}}(0, T; L^{q^{n+1}}_{\rm loc}(\R^{N}))$. Therefore in a finite number of steps we get $u\in L^{\bar p_{\sigma}}(0, T; L^{\bar p_{\sigma}}_{\rm loc}(\R^{N}))$ by the claim. Clearly, this process also proves the second stated embedding.
\vskip2pt

We next prove \eqref{claim4}. Since $q^{n}$ is a fixed multiple of the harmonic mean of ${\bf p}^{n}$ and $q^{1}\geq p_{1}=:q^{0}$, it follows from
\[
q^{n}\geq q^{n-1}\quad \Rightarrow\quad {\bf p}^{n+1}\geq {\bf p}^{n}\quad \Rightarrow \quad q^{n+1}\ge q^{n} 
\]
 that $\{q^{n}\}$ is non-decreasing. Suppose there exists the smallest integer $1\leq h< N$ such that $q^{n}\leq p_{h+1}$ for all $n\geq 0$, and let 
 $q=\lim_{n} q^{n}\leq p_{h+1}$. Then we infer  
 \[
 \lim_{n}q^{n}=\lim_{n}q^{n+1}\quad \Leftrightarrow\quad q=\frac{N+\sigma}{\sum_{1}^{h}\frac{1}{p_{i}}+\frac{N-h}{q}}\quad \Leftrightarrow \quad  q=r_{h},
 \]
where we defined for all $k= 1, \dots, N$
\[
r_{k}:=\frac{k+\sigma}{\sum_{1}^{k}\frac{1}{p_{i}}}.
\]
Notice that
\[
r_{k}\leq p_{k+1}\quad \Leftrightarrow \quad \frac{k+\sigma}{p_{k+1}}\leq \sum_{i=1}^{k}\frac{1}{p_{i}}
\]
so that, adding $1/p_{k+1}$ to both sides, rearranging and using the monotonicity of the $p_{i}$, we get
\[
r_{k}\leq p_{k+1}\quad \Rightarrow \quad r_{k+1}\leq p_{k+2}.
\]
Since $q=r_{h}\leq p_{h+1}$ by assumption, we eventually get by induction $r_{N-1}\leq p_{N}$, which is equivalent to say 
\[
p_{N}\geq \frac{N+\sigma}{\sum_{1}^{N}\frac{1}{p_{i}}}=\bar p\, \left(1+\frac{\sigma}{N}\right)=\bar p_{\sigma}.
\]
This contradicts \eqref{assump}, proving the claim \eqref{claim4} and the theorem.

\end{proof}

The following is a straightforward generalization, needed to deal with the anisotropic growth, of \cite[Ch. I, Lemma 4.1]{DBbook}. We omit its elementary proof.

\begin{lemma}
\label{iteration}
Let $\beta_{i}>0$ for $i=1, \dots, N$ and suppose $Z_{n}\geq 0$ satisfies
\[
Z_{n+1}\leq C\, b^{n}\, \frac{1}{N}\sum_{i=1}^{N}Z_{n}^{1+\beta_{i}}
\]
for some $C, b>1$. Then letting $\beta=\min\{\beta_{1}, \dots, \beta_{N}\}$,
\[
Z_{0}\leq \frac{1}{C^{\frac1\beta}\,  b^{\frac{1}{\beta^{2}}}}\quad \Rightarrow\quad Z_{n}\to 0.
\]
\end{lemma}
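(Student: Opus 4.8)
The plan is to mimic the classical fast-geometric-convergence lemma from \cite[Ch.~I, Lemma~4.1]{DBbook}, adapting only the final exponent bookkeeping to the anisotropic sum on the right-hand side. First I would introduce $\beta:=\min\{\beta_1,\dots,\beta_N\}$ and the trial bound
\[
Z_n\leq \frac{1}{C^{1/\beta}\, b^{1/\beta^2}}\, b^{-n/\beta},
\]
and prove it by induction on $n$. The case $n=0$ is precisely the smallness hypothesis $Z_0\leq C^{-1/\beta}b^{-1/\beta^2}$ (note $b^{-0/\beta}=1$). For the inductive step, I would plug the assumed bound for $Z_n$ into the recursion
\[
Z_{n+1}\leq C\, b^n\, \frac1N\sum_{i=1}^N Z_n^{1+\beta_i},
\]
and estimate each term $Z_n^{1+\beta_i}$. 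Since $Z_n\leq 1$ (a consequence of the smallness condition and $C,b>1$), each factor satisfies $Z_n^{1+\beta_i}\leq Z_n^{1+\beta}$, so the averaged sum is bounded by $Z_n^{1+\beta}$; this is the one place where the anisotropy is absorbed, and it is the only nontrivial observation in an otherwise mechanical argument.

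After this reduction the computation is identical to the isotropic case: substituting the trial bound gives
\[
Z_{n+1}\leq C\, b^n\, \Big(C^{-1/\beta} b^{-1/\beta^2} b^{-n/\beta}\Big)^{1+\beta}
= C\, b^n\, C^{-(1+\beta)/\beta}\, b^{-(1+\beta)/\beta^2}\, b^{-n(1+\beta)/\beta},
\]
and one checks that the powers of $C$ combine to $C^{-1/\beta}$, the $n$-dependent powers of $b$ combine to $b^{n(1 - (1+\beta)/\beta)} = b^{-n/\beta}$, and the remaining constant powers of $b$ give exactly $b^{-(1+\beta)/\beta^2} = b^{-1/\beta^2}\cdot b^{-1/\beta}$; comparing with what is required for $Z_{n+1}$, namely $C^{-1/\beta}b^{-1/\beta^2}b^{-(n+1)/\beta}$, one sees the bound closes since $b^{-1/\beta}=b^{-(n+1)/\beta}/b^{-n/\beta}$ matches the leftover factor. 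Hence the trial bound propagates, and since $b>1$ implies $b^{-n/\beta}\to 0$, we conclude $Z_n\to 0$.

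The argument has no real obstacle — the statement asserts the lemma is elementary and the excerpt itself omits the proof — so the only thing to be careful about is the exponent arithmetic in the inductive step and the preliminary remark that $Z_n\leq 1$ throughout, which is what lets one replace the sum of the $Z_n^{1+\beta_i}$ by the single worst term $Z_n^{1+\beta}$. I would present it as a short induction, pointing out that it reduces verbatim to \cite[Ch.~I, Lemma~4.1]{DBbook} once the averaging step is performed, and therefore omit the final routine manipulations, exactly as the authors do.
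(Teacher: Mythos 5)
Your proof is correct: the induction with the trial bound $Z_n\leq C^{-1/\beta}b^{-1/\beta^2}b^{-n/\beta}$ closes exactly (the exponent arithmetic checks out), and the observation $Z_n\leq 1$, which lets you replace each $Z_n^{1+\beta_i}$ by $Z_n^{1+\beta}$, is precisely how the anisotropic statement reduces to the classical lemma of \cite[Ch.~I, Lemma~4.1]{DBbook}. This is exactly the ``straightforward generalization'' the paper has in mind when it omits the proof, so nothing further is needed.
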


\section{Basic properties of solutions}\label{section3}

The following energy estimate can be proved through a standard Steklov averaging procedure.
\begin{lemma}[Energy Inequality]
Let $u$ be a weak solution of \eqref{ws}. There exists a constant $C=C(N, {\bf p}, \Lambda)>0$ such that for any $\eta\geq 0$ of the form
\begin{equation}
\label{test}
\eta(x, t)=\prod_{i=1}^{N}\eta_{i}^{p_{i}}(x_{i}, t), \qquad \eta_{i}\in C^{\infty}(0, T; C^{\infty}_{c}(\R))
\end{equation}
and $0<t_{1}<t_{2}<T$, $k\in \R$, it holds
\begin{equation}
\label{ei}
\begin{split}
&\left.\int (u-k)_{+}^{2}(x, t)\, \eta(x, t)\, dx\right|_{t_{1}}^{t_{2}} +\frac{1}{C}\sum_{i=1}^{N}\int_{t_{1}}^{t_{2}}\int\left|D_{i}\big((u-k)_{+}\, \eta\big)\right|^{p_{i}}\, dx\, dt\leq \\
&\qquad \quad \int_{t_{1}}^{t_{2}}\int (u-k)_{+}^{2}\, |\eta_{t}|\, dx\, dt+C\sum_{i=1}^{N}\int_{t_{1}}^{t_{2}}\int (u-k)_{+}^{p_{i}}\, |D_{i}\eta^{\frac{1}{p_{i}}}|^{p_{i}}\, dx\, dt.
\end{split}
\end{equation}
\end{lemma}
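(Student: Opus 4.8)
The statement is the standard Caccioppoli/energy inequality for weak solutions of the anisotropic equation, with a product-type test function adapted to the orthotropic structure. The plan is to test the weak formulation \eqref{ws} with $\varphi = (u-k)_{+}\,\eta$, where $\eta$ has the prescribed form \eqref{test}, and to handle the lack of time-regularity of $u$ via Steklov averaging, as is customary. Concretely, I would first replace $u$ by its Steklov average $u_h$, test the averaged equation on the time interval $[t_1,t_2]$ with $\varphi = (u_h-k)_{+}\,\eta$, then pass to the limit $h\to 0$; since the paper says ``can be proved through a standard Steklov averaging procedure'', I would only sketch this and focus on the formal computation, which carries all the content.

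For the formal computation: the parabolic term $\int u_t\,\varphi\,dx\,dt$ produces, after integrating by parts in time, the boundary contribution $\frac12\int (u-k)_{+}^2\,\eta\,dx\big|_{t_1}^{t_2}$ together with the ``good'' term $-\frac12\int\!\!\int (u-k)_{+}^2\,\eta_t\,dx\,dt$, which is moved to the right-hand side and bounded by $\int\!\!\int (u-k)_{+}^2|\eta_t|\,dx\,dt$. For the elliptic term, $D_i\varphi = \eta\,D_i(u-k)_{+} + (u-k)_{+}\,D_i\eta$, and using the structure condition $A_i(x,u,Du)\,D_i u \ge \Lambda^{-1}|D_i u|^{p_i}$ on the set $\{u>k\}$ (where $D_i(u-k)_{+}=D_i u$) gives a lower bound $\Lambda^{-1}\sum_i \int\!\!\int \eta\,|D_i(u-k)_{+}|^{p_i}$ from the first piece, while the upper bound $|A_i|\le \Lambda|D_i u|^{p_i-1}$ controls the cross term $(u-k)_{+}\,A_i\,D_i\eta$ by Young's inequality with exponents $p_i, p_i'$, absorbing an $\varepsilon$-fraction of $\int\!\!\int\eta\,|D_i(u-k)_{+}|^{p_i}$ into the left and leaving a term $C\sum_i\int\!\!\int (u-k)_{+}^{p_i}\,|D_i\eta^{1/p_i}|^{p_i}$ on the right (here one uses $|D_i\eta| \le p_i\,\eta^{1-1/p_i}|D_i\eta^{1/p_i}|$, which is exactly where the product form \eqref{test} with $p_i$-th powers is convenient).

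The one genuinely non-cosmetic point is reconstructing $\big|D_i\big((u-k)_{+}\eta\big)\big|^{p_i}$ on the left-hand side from the quantity $\eta\,|D_i(u-k)_{+}|^{p_i}$ that naturally appears: one writes $D_i\big((u-k)_{+}\eta\big) = \eta^{1/p_i}\,\eta^{1-1/p_i}D_i(u-k)_{+} + (u-k)_{+}D_i\eta$, so that by the elementary inequality $|a+b|^{p_i}\le 2^{p_i-1}(|a|^{p_i}+|b|^{p_i})$ one gets $\big|D_i\big((u-k)_{+}\eta\big)\big|^{p_i} \le 2^{p_i-1}\big(\eta\,|D_i(u-k)_{+}|^{p_i} + (u-k)_{+}^{p_i}|D_i\eta|^{p_i}\big)$, and the last term is again $\le C\,(u-k)_{+}^{p_i}|D_i\eta^{1/p_i}|^{p_i}$ up to the factor $\eta^{p_i-1}\le \|\eta\|_\infty^{p_i-1}$ — or, cleaner, one keeps $\eta$ bounded by normalization and absorbs constants into $C=C(N,\mathbf p,\Lambda)$. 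So the term $\int\!\!\int \eta\,|D_i(u-k)_{+}|^{p_i}$ controls $\int\!\!\int\big|D_i\big((u-k)_{+}\eta\big)\big|^{p_i}$ modulo the same right-hand side remainder, and collecting everything, choosing $\varepsilon$ small, yields \eqref{ei}. The main obstacle is purely technical — justifying the Steklov averaging and the admissibility of $(u-k)_{+}\eta$ as a test function given only the local regularity $u\in C^0_{\rm loc}(0,T;L^2_{\rm loc})\cap L^{\mathbf p}_{\rm loc}(0,T;W^{1,\mathbf p}_{\rm loc})$ — but this is exactly what the phrase ``standard Steklov averaging procedure'' is meant to invoke, and it proceeds verbatim as in \cite[Ch.\ II]{DBbook} with the isotropic gradient term replaced by the sum over $i$ of the anisotropic ones.
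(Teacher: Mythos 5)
Your proposal is correct and follows essentially the same route as the paper: Steklov averaging to justify testing with $\varphi=(u-k)_{+}\,\eta$, the time term handled by integration by parts, the structure conditions \eqref{gcond} plus Young's inequality with exponents $p_i,p_i'$ exploiting the identity $|D_i\eta|=p_i\,\eta^{1-1/p_i}|D_i\eta^{1/p_i}|$ (the paper phrases this via $\hat\eta_i=\eta/\eta_i^{p_i}$ and \eqref{deta}), and finally the elementary inequality $|a+b|^{p_i}\leq 2^{p_i-1}(|a|^{p_i}+|b|^{p_i})$ to recover $|D_i((u-k)_{+}\eta)|^{p_i}$ on the left. Even the normalization issue you flag ($\eta^{p_i-1}$ bounded, i.e.\ taking $\eta_i\in[0,1]$) is handled the same way in the paper, which assumes $\eta_i\in[0,1]$ in its proof though not in the statement.
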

\begin{proof}
Modulo a Steklov averaging process we can suppose that $u_{t}$ exists and that \eqref{ws} holds for the test function $\varphi:=\eta \, (u-k)_{+}$. In this case \eqref{ws} becomes
\begin{equation}
\label{h1}
 \int_{t_{1}}^{t_{2}}\int u_{t}\, \varphi\, dx\, dt+\int_{t_{1}}^{t_{2}}\int A(x, u, Du)\cdot D\varphi\, dx=0.
 \end{equation}
 The first integral is estimated, for the chosen test function, as
 \begin{equation}
 \label{h2}
 \begin{split}
 \int_{t_{1}}^{t_{2}}\int u_{t}\, \varphi\, dx\, dt&= \int_{t_{1}}^{t_{2}}\int \left(\frac{(u-k)_{+}^{2}}{2}\right)_{t}\, \eta\, dx\, dt\\
 &=\left.\int  \frac{(u-k)_{+}^{2}}{2}\, \eta\, dx\right|_{t_{1}}^{t_{2}}-\int_{t_{1}}^{t_{2}}\int \frac{(u-k)_{+}^{2}}{2}\, \eta_{t}\, dx\, dt.
 \end{split}
 \end{equation}
 For the second integral, let 
 \[
 \hat\eta_{i}=\frac{\eta}{\eta_{i}^{p_{i}}},
 \]
 which does not depend on $x_{i}$, and use \eqref{gcond}  to estimate
 \[
 A(x, u, Du)\cdot D(\eta\, (u-k)_{+})\geq \Lambda^{-1}\eta \sum_{i=1}^{N}|D_{i} (u-k)_{+}|^{p_{i}}-\Lambda\, (u-k)_{+}\sum_{i=1}^{N}|D_{i}(u-k)_{+}|^{p_{i}-1}\, |D_{i}\eta|
 \]
 The last term is estimated through Young inequality as
 \begin{equation}
 \label{lj}
 \begin{split}
 (u-k)_{+}\, |D_{i}(u-k)_{+}|^{p_{i}-1}|D_{i}\eta|&=p_{i}(u-k)_{+}|D_{i}(u-k)_{+}|^{p_{i}-1}\, \eta_{i}^{p_{i}-1}\, |D_{i}\eta_{i}|\, \hat\eta_{i}\\
 &\leq \hat\eta_{i}\left(\frac{1}{2\Lambda}\eta_{i}^{p_{i}}\, |D_{i}(u-k)_{+}|^{p_{i}}+C({\bf p}, \Lambda)(u-k)_{+}^{p_{i}}\, |D_{i}\eta_{i}|^{p_{i}}\right).
 \end{split}
 \end{equation}
 Using the definition of $\hat\eta_{i}$ we get
 \begin{equation}
 \label{deta}
 \hat\eta_{i}\, \eta_{i}^{p_{i}}=\eta,\qquad \hat\eta_{i}\, |D_{i}\eta_{i}|^{p_{i}}=|D_{i}\eta^{\frac{1}{p_{i}}}|^{p_{i}},
 \end{equation}
 therefore
 \begin{equation}
 \label{adf}
 \begin{split}
 \int_{t_{1}}^{t_{2}}\int A(x, u, Du)\cdot D\varphi\, dx\geq &\frac{\Lambda^{-1}}{2}\sum_{i=1}^{N}\int_{t_{1}}^{t_{2}}\int|D_{i}(u-k)_{+}|^{p_{i}}\, \eta\, dx\, dt\\
 &- C\int_{t_{1}}^{t_{2}}\sum_{i=1}^{N}\int(u-k)_{+}^{p_{i}}\, |D_{i}\eta^{\frac{1}{p_{i}}}|^{p_{i}}\, dx\, dt.
 \end{split}
 \end{equation}
 Finally, since $p_{i}>1$ for all $i$ and $\eta_{i}\in [0, 1]$, observe that for any $v\geq 0$, 
\[
 \begin{split}
 |D_{i}(v\, \eta)|^{p_{i}}&\leq c({\bf p})\left(|D_{i}v|^{p_{i}}\, \eta^{p_{i}}+v^{p_{i}}\, \hat \eta_{i}^{p_{i}}\, \eta_{i}^{p_{i}-1}\, |D_{i}\eta_{i}|^{p_{i}}\right)\leq 
  c({\bf p})\left(|D_{i}v|^{p_{i}}\, \eta+v^{p_{i}}\, \hat \eta_{i}\, |D_{i}\eta_{i}|^{p_{i}}\right)\\
  &\leq  c({\bf p})\left(|D_{i}v|^{p_{i}}\, \eta+v^{p_{i}}\, |D_{i}\eta^{\frac{1}{p_{i}}}|^{p_{i}}\right)
  \end{split}
\]
 so that when $v=(u-k)_{+}$
 \[
 \begin{split}
 \int_{t_{1}}^{t_{2}}\int A(x, u, Du)\cdot D\varphi\, dx\geq &\frac{\Lambda^{-1}}{2c}\int_{t_{1}}^{t_{2}}\sum_{i=1}^{N}\int\left|D_{i}\big((u-k)_{+}\, \eta\big)\right|^{p_{i}}\, dx\, dt\\
 &- \frac{C}{c}\int_{t_{1}}^{t_{2}}\sum_{i=1}^{N}\int(u-k)_{+}^{p_{i}}\, |D_{i}\eta^{\frac{1}{p_{i}}}|^{p_{i}}\, dx\, dt
 \end{split}
 \]
 and the claim follows inserting this last inequality and \eqref{h2} into \eqref{h1}.
 \end{proof}

In general, we cannot always ensure that the right-hand side in \eqref{ei} is finite. This can either be required as additional condition on the solution, or it can be deduced from Theorem \ref{corPAS} whenever $\max\{p_{1},\dots, p_{N}\}<\bar p_{2}$, being a local solution {\em a priori} in $L^{\bf p}(0, T; W^{1, {\bf p}}_{\rm loc}(\R^{N}))\cap L^{\infty}(0, T; L^{2}_{\rm loc}(\R^{N}))$.

\begin{corollary}\label{corei}
Let $u\in \cap_{i=1}^{N}L^{p_{i}}(S_{T})$ solve \eqref{DP} in $S_{T}$ for $u_{0}\in L^{2}(\R^{N})$. Then $u\in L^{\infty}(0, T; L^{2}(\R^{N}))$, 
\begin{equation}
\label{l2}
\|u(\cdot, t)\|_{2}\leq \|u_{0}\|_{2}
\end{equation}
 and  for any $0<t_{1}<t_{2}<T$, $\psi\in {\rm Lip}([0, T]; \R)$, $\psi\geq 0$  and $k\in \R$ it holds
\begin{equation}
\label{ei3}
\left.\int (u-k)_{+}^{2}(x, t)\, \psi(t)\, dx\right|^{t_{2}}_{t_{1}} +\frac{1}{C}\sum_{i=1}^{N}\int_{t_{1}}^{t_{2}}\int |D_{i}(u-k)_{+} |^{p_{i}}\, \psi\,  dx\, dt\leq  \int_{t_{1}}^{t_{2}}\int (u-k)_{+}^{2}\, |\psi_{t}|\, dx\, dt.
\end{equation}
\end{corollary}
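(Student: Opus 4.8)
The plan is to obtain \eqref{ei3} from the energy inequality \eqref{ei} by a limiting argument that replaces the product test function $\eta(x,t)=\prod_i\eta_i^{p_i}(x_i,t)$ with the spatially constant weight $\psi(t)$. First I would fix a cube $Q_R=[-R,R]^N$ and choose $\eta_i(x_i,t)=\zeta_R(x_i)\,\psi(t)^{1/p_i}$ where $\zeta_R\in C^\infty_c(\R)$ satisfies $0\le\zeta_R\le 1$, $\zeta_R\equiv1$ on $[-R,R]$, ${\rm supp}\,\zeta_R\subseteq[-R-1,R+1]$ and $|\zeta_R'|\le 2$; after mollifying $\psi$ in time (to legitimately use a Lipschitz $\psi$ one first takes $\psi\in C^\infty$ and passes to the limit at the end via dominated convergence, using $\psi\in{\rm Lip}$). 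With this choice $\eta(x,t)=\prod_i\zeta_R^{p_i}(x_i)\,\psi(t)$, so $\eta_t=\prod_i\zeta_R^{p_i}(x_i)\,\psi_t$ and $|\eta_t|\le|\psi_t|$ on $Q_R$, while $D_i\eta^{1/p_i}$ involves $\zeta_R'$, hence is supported in the shell $Q_{R+1}\setminus Q_R$ and bounded by $C\,\psi(t)^{1/p_i}$.

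Next I would insert these choices into \eqref{ei}. The left-hand side controls from below
\[
\left.\int_{Q_R}(u-k)_+^2\,\psi(t)\,dx\right|_{t_1}^{t_2}+\frac1C\sum_{i=1}^N\int_{t_1}^{t_2}\!\!\int_{Q_R}|D_i((u-k)_+)|^{p_i}\,\psi\,dx\,dt,
\]
since on $Q_R$ one has $\eta\equiv\psi$ and $(u-k)_+\eta=(u-k)_+\psi^{1/p_i}\cdot\prod_{j\ne i}\zeta_R^{p_j/p_i}$ reduces, on the interior where all $\zeta_R\equiv1$, to $(u-k)_+\psi^{1/p_i}$, so that $|D_i((u-k)_+\eta)|^{p_i}=|D_i(u-k)_+|^{p_i}\psi$ there; a cheaper route that avoids splitting off the cutoff product inside the gradient is to note $|D_i(v\eta)|^{p_i}\ge$ (a constant times) $|D_iv|^{p_i}\eta^{p_i}$ minus cutoff-gradient terms, but cleanest is to simply restrict the positive gradient term in \eqref{ei} to $Q_R$ where $\eta=\psi$. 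The two terms on the right-hand side of \eqref{ei} are bounded by
\[
\int_{t_1}^{t_2}\!\!\int_{Q_{R+1}}(u-k)_+^2\,|\psi_t|\,dx\,dt+C\sum_{i=1}^N\int_{t_1}^{t_2}\!\!\int_{Q_{R+1}\setminus Q_R}(u-k)_+^{p_i}\,\psi\,dx\,dt.
\]

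Finally I would let $R\to\infty$. The point is that $u\in\bigcap_i L^{p_i}(S_T)$ and, because $u$ solves \eqref{DP} with $u_0\in L^2(\R^N)$, one also has control of the $L^2$ norm — indeed \eqref{l2} is part of the very claim, so the clean order is: first derive \eqref{l2} by taking $k=0$ (and separately $k=0$ applied to $-u$, i.e. using $(-u)_+$) together with $\psi\equiv1$ in the localized inequality, sending $R\to\infty$, and using that the shell term $\int_{Q_{R+1}\setminus Q_R}|u|^{p_i}\to0$ as $R\to\infty$ (tail of a convergent integral) while $\psi_t=0$; this yields $\int(u-k)_+^2(x,t_2)\le\int(u-k)_+^2(x,t_1)$ for a.e. $t_1$, and letting $t_1\to0$ with the strong $L^2_{\rm loc}$ attainment of the initial datum plus Fatou gives $\|u(\cdot,t)\|_2\le\|u_0\|_2$, hence $u\in L^\infty(0,T;L^2(\R^N))$. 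With this in hand, in the general inequality the shell term $\sum_i\int_{t_1}^{t_2}\int_{Q_{R+1}\setminus Q_R}(u-k)_+^{p_i}\psi\,dx\,dt\to0$ as $R\to\infty$ (again a tail, using $(u-k)_+^{p_i}\le C(|u|^{p_i}+|k|^{p_i})$ is integrable on each time-slab once we localize — more precisely one uses $(u-k)_+\le|u|$ when $k\ge0$; for general $k$ one splits the domain or notes the shell has finite measure only after a further localization, so it is cleanest to state \eqref{ei3} first for $k\ge0$ and then remark the general case follows by considering the solution $u-k$... which is \emph{not} a solution unless the equation is translation-invariant in $u$ — so in fact one keeps general $k$ but must then also assume or derive that $(u-k)_+\in L^{p_i}$, which holds since $(u-k)_+^{p_i}\le 2^{p_i-1}(|u|^{p_i}+|k|^{p_i})$ and the last term integrates over the shell to $C|k|^{p_i}|Q_{R+1}\setminus Q_R|\to\infty$; the remedy is that the shell term carries the weight coming from $|D_i\zeta_R|^{p_i}$ which, after the standard trick of choosing $\zeta_R$ with $|\zeta_R'|^{p_i}$ replaced by a telescoping family, can be absorbed — but the simplest honest fix is to invoke \eqref{l2} to first get $u\in L^\infty(0,T;L^2)$, then note $(u-k)_+\le|u|+|k|\chi_{\{u>k\}}$ and handle the constant part by the finite measure of the shell \emph{in space} times the finite time interval, which does \emph{not} tend to zero; hence the only clean statement is with $(u-k)_+$ replaced by using that the shell term is multiplied by $\psi$ which is bounded, and one instead uses absolute continuity of $\int_{Q_{R+1}\setminus Q_R}(|u|^{p_i}+1)$ — this forces the extra hypothesis $1\in L^1$, false; therefore the correct and intended argument restricts to the situation where $(u-k)_+\in\bigcap_i L^{p_i}(S_T)$, which is automatic for $k\le0$ and, for $k>0$, follows because $(u-k)_+\le u_+\le|u|$). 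I expect this bookkeeping around the shell term — i.e. verifying that the cutoff contributions vanish in the limit $R\to\infty$ given only $u\in\bigcap_iL^{p_i}(S_T)$ and $u_0\in L^2$ — to be the main obstacle; everything else is a direct substitution into \eqref{ei} followed by dominated/monotone convergence in $R$ and then in the time-mollification parameter.
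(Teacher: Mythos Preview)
Your overall strategy---choose a product cutoff $\eta$, apply the localized energy inequality, and let $R\to\infty$ using $u\in\bigcap_i L^{p_i}(S_T)$ to kill the shell terms---is exactly what the paper does. Two points, however, need correcting. First, your factorization is off: with $\eta_i(x_i,t)=\zeta_R(x_i)\,\psi(t)^{1/p_i}$ one gets $\eta=\prod_i\eta_i^{p_i}=\psi(t)^{N}\prod_i\zeta_R^{p_i}(x_i)$, not $\psi(t)\prod_i\zeta_R^{p_i}(x_i)$. Second, and more seriously, even after fixing the factorization so that $\eta=\zeta(x)\,\psi(t)$ on $Q_R$, plugging into \eqref{ei} does \emph{not} yield \eqref{ei3}: the gradient term in \eqref{ei} is $|D_i((u-k)_+\,\eta)|^{p_i}$, which on the set $\{\zeta\equiv1\}$ equals $\psi^{p_i}\,|D_i(u-k)_+|^{p_i}$, not $\psi\,|D_i(u-k)_+|^{p_i}$ as you claim. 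The paper avoids this mismatch by \emph{not} invoking the final form \eqref{ei}; instead it reruns the proof of the energy inequality with test function $\varphi=\eta(x)\,\psi(t)\,(u-k)_+$ (time-independent spatial cutoff $\eta$), stopping at the intermediate estimate \eqref{adf}, where the elliptic term appears as $|D_i(u-k)_+|^{p_i}\,\eta$, linear in $\eta$. Multiplying through by $\psi$ then gives the correct weight.

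On the $k$-bookkeeping: your long detour is largely self-inflicted, and the final sentence has the sign reversed. The shell term vanishes whenever $(u-k)_+\in\bigcap_iL^{p_i}(S_T)$; this is automatic for $k\ge 0$ (since then $(u-k)_+\le|u|$), not for $k\le 0$. The paper simply writes ``cancels the last term on the right as before'' and does not elaborate on negative $k$ either; in the applications (Theorem~\ref{thL1Linf}) only $k>0$ is ever used, so the issue is moot. Your choice of a unit-width cutoff ($|\zeta_R'|\le 2$) works just as well as the paper's width-$R$ cutoff ($|D_i\eta^{1/p_i}|\le C/R$) for the tail argument when $k\ge 0$.
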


\begin{proof}
To get \eqref{l2} let $0\leq \eta\leq 1$ be as in \eqref{test}, independent on $t$ and such that 
\begin{equation}
\label{test2}
{\rm supp}(\eta_{i})\subseteq [-2R, 2R], \qquad \eta_{i}\equiv 1 \ \text{ on $[-R, R]$},\qquad  |D_{i}\eta^{\frac{1}{p_{i}}}|\leq \frac{C}{R}.
\end{equation}
With such $\eta$, apply the \eqref{ei} to both $u$ and $-u$ for $k=0$, adding the corresponding inequalities. Since $u\in C^{0}([0, T]; L^{2}_{\rm loc}(\R^{N}))$ we  can let $t_{1}\to 0$ to get
\[
\int u^{2}(x, t_{2})\, \eta\, dx\leq \int u_{0}^{2}\, dx +C\sum_{i=1}^{N}\int_{0}^{t_{2}}\int |u|^{p_{i}}\, |D_{i}\eta^{\frac{1}{p_{i}}}|^{p_{i}}\, dx\, dt.
\]
We let $R\to +\infty$, apply Fatou's lemma on the left-hand side, and observe that the right-hand side vanishes by the last property in \eqref{test2} and $u\in \cap L^{p_{i}}(S_{T})$. Therefore \eqref{l2} is proved.
\noindent
To prove \eqref{ei3}, repeat the proof of \eqref{ei} with $\varphi=\eta\, \psi$ up to \eqref{adf}, obtaining
\[
\begin{split}
&\left.\int (u-k)_{+}^{2}(x, t)\, \eta(x, t)\, \psi(t)\, dx\right|_{t_{1}}^{t_{2}} +\frac{1}{C}\sum_{i=1}^{N}\int_{t_{1}}^{t_{2}}\int\left|D_{i}\big((u-k)_{+}\big)\right|^{p_{i}}\, \eta\, \psi dx\, dt\leq \\
&\qquad \quad \int_{t_{1}}^{t_{2}}\int (u-k)_{+}^{2}\,\eta \, |\psi_{t}|\, dx\, dt+C\sum_{i=1}^{N}\int_{t_{1}}^{t_{2}}\int (u-k)_{+}^{p_{i}}\, |D_{i}\eta^{\frac{1}{p_{i}}}|^{p_{i}}\, \psi\, dx\, dt.
\end{split}
\]
Letting $R\to +\infty$ cancels the last term on the right as before, while we apply Fatou on the terms involving the spatial derivatives of $u$ and dominated convergence to the others, to obtain \eqref{ei3}. 
\end{proof}

A useful variant of the energy inequality \eqref{ei} is the following.

\begin{lemma}[Energy inequality 2]\label{Lei2}
Let $F\in C^{1,1}(\R)$ with $|F'|\leq M$ and $M\geq F''(t)>0$ for a.e. $t\in \R$ and  for suitable $M\in \R$. If $u$ is a local weak solution to \eqref{ws} and $\eta$ is of the form \eqref{test} and independent of $t$, then 
\begin{equation}
\label{F}
\begin{split}
\left.\int F(u(x, t))\, \eta(x)\, dx\right|^{t_{2}}_{t_{1}}&+\frac{1}{2\, \Lambda}\sum_{i=1}^{N}\int_{t_{1}}^{t_{2}}\int F''(u)\, \eta \, |D_{i}u|^{p_{i}}\, dx\, dt\\
&\leq C_{\Lambda}\sum_{i=1}^{N}\int_{t_{1}}^{t_{2}}\int|F'(u)|^{p_{i}}F''(u)^{1-p_{i}}|D_{i}\eta^{\frac{1}{p_{i}}}|^{p_{i}}\, dx\, dt
\end{split}
\end{equation}
and any $0<t_{1}<t_{2}< T$.
\end{lemma}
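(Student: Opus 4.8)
The plan is to mimic the proof of the first energy inequality \eqref{ei}, but testing the weak formulation \eqref{ws} with $\varphi = F'(u)\,\eta$ instead of $(u-k)_+\,\eta$. After a Steklov averaging procedure (which is legitimate since $F'$ is Lipschitz, being $C^{1,1}$ with bounded first derivative, so $F'(u)\,\eta$ has the required regularity), equation \eqref{ws} reduces to
\[
\int_{t_1}^{t_2}\!\!\int u_t\,F'(u)\,\eta\,dx\,dt+\int_{t_1}^{t_2}\!\!\int A(x,u,Du)\cdot D\big(F'(u)\,\eta\big)\,dx\,dt=0.
\]
For the parabolic term, since $\eta$ is independent of $t$ one has $u_t\,F'(u)=\big(F(u)\big)_t$, so it equals $\big.\int F(u)\,\eta\,dx\big|_{t_1}^{t_2}$, producing the boundary term on the left of \eqref{F}. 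For the elliptic term, expand $D_i\big(F'(u)\eta\big)=F''(u)\,D_iu\,\eta+F'(u)\,D_i\eta$ and use the structure conditions \eqref{gcond}: the first contribution is bounded below by $\Lambda^{-1}\sum_i F''(u)\,\eta\,|D_iu|^{p_i}$ (here $F''>0$ is essential to keep the sign), while the second is controlled in absolute value by $\Lambda\sum_i|F'(u)|\,|D_iu|^{p_i-1}\,|D_i\eta|$.

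The key step is then to absorb the mixed term by a weighted Young's inequality, exactly as in \eqref{lj}. Writing $|D_i\eta|=p_i\,\eta_i^{p_i-1}|D_i\eta_i|\,\hat\eta_i$ with $\hat\eta_i=\eta/\eta_i^{p_i}$, one writes
\[
|F'(u)|\,|D_iu|^{p_i-1}\,|D_i\eta|
=\Big(F''(u)^{\frac{p_i-1}{p_i}}\eta_i^{p_i-1}|D_iu|^{p_i-1}\Big)\cdot\Big(|F'(u)|\,F''(u)^{-\frac{p_i-1}{p_i}}\,p_i\,|D_i\eta_i|\,\hat\eta_i\Big),
\]
and applies Young with exponents $p_i'=p_i/(p_i-1)$ and $p_i$. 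The first factor to the power $p_i'$ gives $\tfrac{1}{2\Lambda^2}\,F''(u)\,\eta_i^{p_i}\,|D_iu|^{p_i}$ after choosing the Young constant, which multiplied by $\hat\eta_i$ and by the leftover $\hat\eta_i$-weight yields $F''(u)\,\eta\,|D_iu|^{p_i}$; this is absorbed into the good term on the left, leaving the coefficient $1/(2\Lambda)$ as stated. The second factor to the power $p_i$, together with $\hat\eta_i$, produces (using $\hat\eta_i|D_i\eta_i|^{p_i}=|D_i\eta^{1/p_i}|^{p_i}$ as in \eqref{deta}) exactly $C_\Lambda\,|F'(u)|^{p_i}\,F''(u)^{1-p_i}\,|D_i\eta^{1/p_i}|^{p_i}$, which is the right-hand side of \eqref{F}. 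Combining the parabolic and elliptic estimates gives \eqref{F}.

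The main obstacle I anticipate is twofold. First, one must make sure the Young splitting is legitimate on the set where $D_iu=0$: there $F''(u)^{1-p_i}$ may be large (if $F''(u)$ is small) or even undefined if $F''$ vanishes, but the hypothesis $F''>0$ a.e. together with the convention that the mixed term already carries a factor $|D_iu|^{p_i-1}$ (which vanishes there) means both sides vanish on that set, so the estimate is vacuous there — this should be handled with a remark rather than a computation. Second, the Steklov regularization: one needs $u\in C^0_{\rm loc}(0,T;L^2_{\rm loc})\cap L^{\bf p}_{\rm loc}(0,T;W^{1,{\bf p}}_{\rm loc})$ and the bound $|F'|\le M$ to pass to the limit in $\int u_t\,F'(u)\,\eta$; since $F$ and $F'$ are globally Lipschitz the compositions behave well and the passage is standard, so I would simply invoke "a standard Steklov averaging procedure" as the lemma statement of \eqref{ei} already does. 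No finiteness hypothesis on the right-hand side is needed a priori because $|F'|\le M$, $F''\le M$ and $\eta$ compactly supported make the right side of \eqref{F} automatically finite.
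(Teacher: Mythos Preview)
Your approach is essentially identical to the paper's: test with $\varphi=F'(u)\,\eta$, handle the time term via Steklov averaging, split the elliptic term using \eqref{gcond}, and absorb the mixed term by a weighted Young inequality with weight $F''(u)^{1-1/p_i}$.

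There is one bookkeeping slip in your Young step that, as written, prevents the absorption from closing. You place $\hat\eta_i$ in the \emph{second} factor of the product, so after Young the first term is $\epsilon\,F''(u)\,\eta_i^{p_i}\,|D_iu|^{p_i}$, with no $\hat\eta_i$ available to multiply it (your phrase ``which multiplied by $\hat\eta_i$ and by the leftover $\hat\eta_i$-weight'' has no referent). Since $\hat\eta_i\le 1$ one has $\eta_i^{p_i}\ge \eta$, so this term cannot be absorbed into $\Lambda^{-1}F''(u)\,\eta\,|D_iu|^{p_i}$. The fix, which is exactly what the paper does, is to factor $\hat\eta_i$ \emph{outside} the product before applying Young; then both resulting terms carry a single $\hat\eta_i$, and the identities $\hat\eta_i\,\eta_i^{p_i}=\eta$ and $\hat\eta_i\,|D_i\eta_i|^{p_i}=|D_i\eta^{1/p_i}|^{p_i}$ from \eqref{deta} give precisely the two terms you want. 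With this correction your argument is complete and matches the paper's.

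One further small remark: your final claim that the right-hand side of \eqref{F} is ``automatically finite'' from $|F'|\le M$ and $F''\le M$ is not quite right, since $F''(u)^{1-p_i}$ can blow up where $F''$ is small; the inequality is still valid in that case, but finiteness is not guaranteed by those bounds alone.
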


\begin{proof}
Test the equation with $\varphi= F'(u)\, \eta$, which is readily checked to be admissible since $F'\in {\rm Lip}(\R)$ is bounded and
\[
|D_{i}\varphi|\leq M|D_{i}\eta|+M|D_{i}u|\, \eta.
\]
Notice that $|F(s)|\leq M|s|$, hence $F(u(\cdot, \tau))\in L^{1}_{\rm loc}(\R^{N})$ since $u(\cdot, \tau)\in L^{2}_{\rm loc}(\R^{N})$, so by Steklov averaging we can compute
\[
\int_{t_{1}}^{t_{2}}\int u_{t}\varphi\, dx\, dt=\int_{t_{1}}^{t_{2}}\int \big(F(u)\, \eta\big)_{t}\, dx\, dt=\int F(u(x, t_{2}))\, \eta(x)\, dx-\int F(u(x, t_{1}))\, \eta(x)\, dx
\]
for any $T>t_{2}>t_{1}>0$. Therefore we have
\[
\left.\int F(u(x, t))\, \eta(x)\, dx\right|^{t_{2}}_{t_{1}}+\sum_{i=1}^{N}\int_{t_{1}}^{t_{2}}\int A_{i}(x, u, Du) \, \big(F''(u)\, \eta \, D_{i}u+ F'(u)\, D_{i}\eta\big)\, dx\, dt=0
\]
which implies by \eqref{gcond}
\begin{equation}
\label{lja}
\begin{split}
\left.\int F(u(x, t))\, \eta(x)\, dx\right|^{t_{2}}_{t_{1}}+\frac{1}{\Lambda}&\sum_{i=1}^{N}\int_{t_{1}}^{t_{2}}\int F''(u)\, \eta \, |D_{i}u|^{p_{i}}\, dx\, dt\\
&\leq \Lambda \sum_{i=1}^{N}\int_{t_{1}}^{t_{2}}\int |D_{i}u|^{p_{i}-1}  \, |F'(u)|\,  |D_{i}\eta|\, dx\, dt.
\end{split}
\end{equation}
Proceeding as in \eqref{lj} and making use of \eqref{deta} we can bound the right-hand side as
\[
\begin{split}
|D_{i}u|^{p_{i}-1}\,  |F'(u)|\, |D_{i}\eta|&=  \hat\eta_{i}\Big(|D_{i}u|^{p_{i}-1}\, F''(u)^{1-\frac{1}{p_{i}}}\, \eta_{i}^{p_{i}-1} \frac{|F'(u)|}{F''(u)^{1-\frac{1}{p_{i}}}}|D_{i}\eta_{i}|\Big)\\
& \leq \frac{1}{2\, \Lambda} F''(u)\, \eta\, |D_{i}u|^{p_{i}} + C_{\Lambda}|F'(u)|^{p_{i}}F''(u)^{1-p_{i}}|D_{i}\eta^{\frac{1}{p_{i}}}|^{p_{i}},
\end{split}
\]
which, inserted into \eqref{lja} gives the claim.
\end{proof}

\begin{corollary}
\label{corL1}
Suppose $u\in \cap_{i=1}^{N}L^{p_{i}}(S_{T})$ solves \eqref{DP} in $S_{T}$ for $u_{0}\in L^{1}(\R^{N})\cap L^{2}(\R^{N})$. Then it holds
\[
\int |u(x, t)|\, dx\leq \int |u_{0}|\, dx,\qquad \forall t\in [0, T]
\]
\end{corollary}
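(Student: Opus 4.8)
The proof will combine the second energy inequality \eqref{F} with a well-chosen family of convex regularisations of $s\mapsto|s|$, together with the $L^{\infty}(0,T;L^{2})$ bound of Corollary \ref{corei}. Write $p_{\rm max}:=\max\{p_{1},\dots,p_{N}\}$ and $p_{\rm max}'=p_{\rm max}/(p_{\rm max}-1)$. Fix a time-independent product cut-off $\eta_{R}=\prod_{i}\eta_{i,R}^{p_{i}}(x_{i})$ of the form \eqref{test}, with $0\le\eta_{R}\le1$, $\eta_{R}\equiv1$ on $[-R,R]^{N}$, $\eta_{R}$ increasing to $1$ as $R\to\infty$, and $|D_{i}\eta_{R}^{1/p_{i}}|\le C/R$ supported in $\{|x_{i}|\ge R\}$. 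Applying \eqref{F} with $F=F_{\delta}$ (to be chosen) and $\eta=\eta_{R}$, and discarding the non-negative gradient term on the left, leaves, for $0<t_{1}<t_{2}<T$,
\[
\int F_{\delta}(u(x,t_{2}))\,\eta_{R}\,dx-\int F_{\delta}(u(x,t_{1}))\,\eta_{R}\,dx\le C\sum_{i=1}^{N}\int_{t_{1}}^{t_{2}}\int|F_{\delta}'(u)|^{p_{i}}\,F_{\delta}''(u)^{1-p_{i}}\,|D_{i}\eta_{R}^{1/p_{i}}|^{p_{i}}\,dx\,dt.
\]
The heart of the matter is to choose $F_{\delta}$ so that $|F_{\delta}'(s)|^{p_{i}}F_{\delta}''(s)^{1-p_{i}}\le C_{\delta}|s|^{p_{i}}$ for all $s$ and all $i$; then the right-hand side is bounded by $C_{\delta}\sum_{i}\int_{0}^{T}\int_{\{|x_{i}|\ge R\}}|u|^{p_{i}}\,dx\,dt$, which tends to $0$ as $R\to\infty$ precisely because $u\in\bigcap_{i}L^{p_{i}}(S_{T})$.

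For the choice of $F_{\delta}$, take a fixed even convex $F\in C^{\infty}(\R)$ with $F(0)=0$, $0<F''\le F''(0)$ and $F''(s)\asymp|s|^{-p_{\rm max}'}$ as $|s|\to\infty$ — e.g.\ $F''(s)=c_{0}(1+s^{2})^{-p_{\rm max}'/2}$ with $c_{0}$ normalising $\int_{\R}F''=2$, which is possible exactly because $p_{\rm max}'>1$. Then $|F'|\le1$, $F(s)\le|s|$, and $F(s)\to|s|$. Put $F_{\delta}(s)=\delta F(s/\delta)$; this is admissible in Lemma \ref{Lei2} ($F_{\delta}\in C^{\infty}$, $0<F_{\delta}''\le F''(0)/\delta$, $|F_{\delta}'|\le1$), satisfies $F_{\delta}(0)=0$, $F_{\delta}(s)\le C\delta^{-1}s^{2}$, and $F_{\delta}(s)\uparrow|s|$ as $\delta\downarrow0$ (by convexity of $F$). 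One computes $F_{\delta}''(s)=c_{0}\delta^{p_{\rm max}'-1}(\delta^{2}+s^{2})^{-p_{\rm max}'/2}$ and $|F_{\delta}'(s)|\le\min(1,c_{0}|s|/\delta)$; inserting these into $|F_{\delta}'(s)|^{p_{i}}F_{\delta}''(s)^{1-p_{i}}$, splitting into $\{|s|\le\delta\}$ and $\{|s|>\delta\}$, and using the elementary inequality $p_{\rm max}'(p_{i}-1)\le p_{i}$ (equivalent to $p_{i}\le p_{\rm max}$) to absorb all powers of $|s|$ into $|s|^{p_{i}}$, one obtains $|F_{\delta}'(s)|^{p_{i}}F_{\delta}''(s)^{1-p_{i}}\le C(p_{i},p_{\rm max})\,\delta^{-1}|s|^{p_{i}}$ for every $s$, as required.

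It remains to pass to the limits. By Corollary \ref{corei}, $u\in L^{\infty}(0,T;L^{2}(\R^{N}))$ with $\|u(\cdot,t)\|_{2}\le\|u_{0}\|_{2}$, hence $F_{\delta}(u(\cdot,t))\le C\delta^{-1}u(\cdot,t)^{2}\in L^{1}(\R^{N})$. Letting $R\to\infty$ in the displayed inequality (monotone convergence on the left since $\eta_{R}\uparrow1$, and the right-hand side $\to0$) yields $\int F_{\delta}(u(t_{2}))\,dx\le\int F_{\delta}(u(t_{1}))\,dx$ for all $0<t_{1}<t_{2}<T$. Next, as $t_{1}\to0$ one has $u(\cdot,t_{1})\weakto u_{0}$ in $L^{2}(\R^{N})$ with $\|u(\cdot,t_{1})\|_{2}\to\|u_{0}\|_{2}$ (from $u\in C(0,T;L^{2}_{\rm loc})$, the uniform $L^{2}$ bound, and the fact that the initial datum is attained strongly in $L^{2}_{\rm loc}$), hence $u(\cdot,t_{1})\to u_{0}$ strongly in $L^{2}(\R^{N})$; a dominated-convergence argument with dominants $C\delta^{-1}u(\cdot,t_{1})^{2}\to C\delta^{-1}u_{0}^{2}$ then gives $\int F_{\delta}(u(t_{2}))\,dx\le\int F_{\delta}(u_{0})\,dx\le\|u_{0}\|_{1}$. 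Finally, $\delta\downarrow0$ and monotone convergence ($F_{\delta}(u(t_{2}))\uparrow|u(t_{2})|$) give $\int|u(x,t_{2})|\,dx\le\|u_{0}\|_{1}$ for every $t_{2}\in(0,T)$, the case $t=0$ being trivial.

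The main obstacle is exactly the construction of $F_{\delta}$, which must reconcile three conflicting demands: to force $F_{\delta}\to|s|$ the derivative $F_{\delta}'$ has to develop a jump at the origin, pushing $F_{\delta}''\sim\delta^{-1}$ near $0$; to prevent $F_{\delta}''(u)^{1-p_{i}}$ from overwhelming the available $L^{p_{i}}$-integrability of $u$, $F_{\delta}''$ must not decay faster than $|s|^{-p_{\rm max}'}$ at infinity — this is where the conjugate exponent of $p_{\rm max}$ and the inequality $p_{\rm max}'(p_{i}-1)\le p_{i}$ enter; yet $F_{\delta}''$ must still be integrable so that $|F_{\delta}'|\le1$, which is possible only because $p_{\rm max}'>1$. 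The scaling $F_{\delta}(s)=\delta F(s/\delta)$ with $F''\asymp|s|^{-p_{\rm max}'}$ is what makes the three compatible; the rest (cut-off, monotone and dominated convergence) is routine once Lemma \ref{Lei2} is in hand.
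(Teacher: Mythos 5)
Your proof is correct and follows essentially the same route as the paper: both apply Lemma \ref{Lei2} to a convex regularization of $s\mapsto|s|$ whose second derivative decays like $|s|^{-p_{\rm max}'}$ at infinity (the paper's choice $F_{\eps}''(s)=\eps\,(|s|^{\alpha}+\eps)^{-\frac{1}{\alpha}-1}$ with $\alpha=(p_{\rm max}-1)^{-1}$ has exactly this tail), so that $|F'(u)|^{p_i}F''(u)^{1-p_i}\le C_{\delta}|u|^{p_i}$, the cut-off terms vanish as $R\to\infty$ thanks to $u\in\cap_{i}L^{p_i}(S_T)$, and monotone convergence in the regularization parameter concludes. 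The only (harmless) deviation is your order of limits — sending $R\to\infty$ before $t_1\to 0$, which obliges you to upgrade $u(\cdot,t_1)\to u_0$ to strong convergence in $L^{2}(\R^{N})$ via \eqref{l2} (correctly done), whereas the paper sends $t_1\to 0$ first with the compactly supported cut-off still in place and only needs $L^{1}_{\rm loc}$ convergence.
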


\begin{proof}
Let $\eta$ be as in \eqref{test2} and define, for $\alpha>0$ to be determined later,
\[
F_{\eps}(s)=\int_{0}^{s}\frac{\tau}{(|\tau|^{\alpha}+\eps)^{\frac{1}{\alpha}}}\, d\tau, 
\]
so that 
\[
F_{\eps}'(s)=\frac{s}{ (|s|^{\alpha}+\eps)^{\frac{1}{\alpha}}}, \qquad F_{\eps}''(s)=\frac{\eps}{(|s|^{\alpha}+\eps)^{\frac{1}{\alpha}+1}}>0.
\]
All the assumptions of the previous Lemma are satisfied and \eqref{F} implies, with our choice of $F_{\eps}$,
\[
\left.\int F_{\eps}(u(x, t))\, \eta(x, t)\, dx\right|^{t_{2}}_{t_{1}}\leq C\sum_{i=1}^{N}\eps^{1-p_{i}}\int_{t_{1}}^{t_{2}}\int |u|^{p_{i}}(|u|^{\alpha}+\eps)^{p_{i}-1-\frac{1}{\alpha}}|D_{i}\eta^{\frac{1}{p_{i}}}|^{p_{i}}\, dx\, dt
\]
We then choose 
\[
\alpha=(\max\{p_{1}, \dots, p_{N}\}-1)^{-1}>0,
\]
so that $p_{i}-1-\frac{1}{\alpha}\leq 0$ for all $i=1, \dots, N$ and therefore 
\[
\left.\int F_{\eps}(u(x, t))\, \eta(x, t)\, dx\right|^{t_{2}}_{t_{1}}\leq C_{\eps}\sum_{i=1}^{N}\int_{t_{1}}^{t_{2}}\int |u|^{p_{i}}|D_{i}\eta^{\frac{1}{p_{i}}}|^{p_{i}}\, dx\, dt
\]
for any $\eps\in \ ]0, 1[$ and $C_{\eps}=C(\Lambda, {\bf p}, \eps)>0$. Since $F_{\eps}$ is $1$-Lipschitz and $u(\cdot, t)\to u_{0}$ in $L^{1}_{\rm loc}(\R^{N})$, we can let $t_{1}\to 0$ in the previous estimate. By $F_{\eps}(s)\leq |s|$, $0\leq \eta\leq 1$ and $|D_{i}\eta^{\frac{1}{p_{i}}}|\leq \frac{C}{R}$, we get
\[
\int F_{\eps}(u(x, t_{2}))\, \eta(x, t_{2})\, dx\leq \int |u_{0}|\, dx +\sum_{i=1}^{N}\frac{C_{\eps}}{R^{p_{i}}}\int_{0}^{t_{2}}\int |u|^{p_{i}}\, dx\, dt,
\]
for all $\eps\in \ ]0, 1[$ and $R\geq 1$. Let first $R\to +\infty$ to cancel out the last term thanks to the hypothesis $u\in \cap L^{p_{i}}(S_{T})$ obtaining through Fatou's Lemma 
\[
\int F_{\eps}(u(x, t_{2}))\, dx\leq \int |u_{0}|\, dx \qquad \forall t_{2}\in [0, T]
\]
and since  $0\leq F_{\eps}(s)\nearrow |s|$,  we obtain the conclusion by monotone convergence.
\end{proof}

\section{$L^{\infty}$-estimates}\label{section4}

\begin{theorem}
\label{thL1Linf}
Let $p_{1}\leq \dots\leq p_{N}$, $\bar p<N$ and  $u\in \cap_{i=1}^{N}L^{p_{i}}(S_{T})$ solve \eqref{DP} for $u_{0}\in L^{1}(\R^{N})\cap L^{2}(\R^{N})$. Then:
\begin{enumerate}
\item
if  $\bar p_{2}> 2$, then $u\in L^{\infty}_{\rm loc}(0, T; L^{\infty}(\R^{N}))$ and for any $q\in [2, \bar p_{2}]$ the following estimate holds true
\begin{equation}
\label{c1}
\sup_{t\in [\theta, T]}\|u(\cdot, t)\|_{\infty}\leq \frac{C}{\theta^{\frac{N+\bar p}{\lambda_{q}}}}\left(\int_{\theta/2}^{T}\int |u|^{q}\, dx\, dt\right)^{\frac{\bar p}{\lambda_{q}}},\qquad  \lambda_{q}=N\, (\bar p-2)+\bar p\, q, \quad  \theta>0.
\end{equation}
\item
If $\bar p_{1}> 2$, the following $L^{1}-L^{\infty}$ estimate holds true for any $\tau \in \ ]0, T]$
\begin{equation}
\label{c2}
\|u(\cdot, \tau)\|_{\infty}\leq \frac{C}{\tau^{\frac{N}{\lambda}}}\|u_{0}\|_{1}^{\frac{\bar p}{\lambda}},\qquad \lambda:=\lambda_{1}=N(\bar p -2)+\bar p.
\end{equation}
\end{enumerate}
\end{theorem}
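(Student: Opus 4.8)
The plan is a De Giorgi iteration for part (1), and a bootstrap off of it for part (2). The two basic tools are the energy inequality \eqref{ei3}---available since $u\in\bigcap_iL^{p_i}(S_T)$ solves \eqref{DP} with $u_0\in L^2$, see Corollary \ref{corei}---and the parabolic anisotropic Sobolev inequality \eqref{PS} with $\alpha_i\equiv1$.

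\emph{Step 1 (the estimate \eqref{c1}).} Fix $q\in[2,\bar p_2]$, $\theta>0$, and a level $k>0$ to be chosen (nothing to prove if $\iint_{\theta/2}^T u_+^q=0$). Set $k_n=k(1-2^{-n})\uparrow k$, $\sigma_n=\theta(1-2^{-n-1})\uparrow\theta$, pick Lipschitz cutoffs $\psi_n$ with $\psi_n\equiv0$ on $[0,\sigma_n]$, $\psi_n\equiv1$ on $[\sigma_{n+1},T]$, $|\psi_n'|\le C2^n/\theta$, and put $Y_n:=\iint_{[\sigma_n,T]\times\R^N}(u-k_n)_+^{q}$. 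Testing \eqref{ei3} with $\psi=\psi_n$, $k=k_{n+1}$ gives
\[
\sup_{t\in[\sigma_{n+1},T]}\int(u-k_{n+1})_+^2\,dx+\sum_{i=1}^N\int_{\sigma_{n+1}}^T\!\!\int|D_i(u-k_{n+1})_+|^{p_i}\,dx\,dt\le\frac{C2^n}{\theta}\int_{\sigma_n}^T\!\!\int(u-k_{n+1})_+^2\,dx\,dt,
\]
and since $\{(u-k_{n+1})_+>0\}\subseteq\{(u-k_n)_+>k2^{-n-1}\}$, the right-hand side is at most $\tfrac{C2^n}{\theta}(C2^n/k)^{q-2}Y_n$. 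Next I apply \eqref{PS} to $(u-k_{n+1})_+$ on $[\sigma_{n+1},T]$, with $\sigma=2$ and with the free parameter called $\theta$ in \eqref{PS} taken equal to the \emph{critical} value $\bar p/\bar p^*$: for this choice the exponent produced by \eqref{PS} equals $\bar p_2$, its power of $T$ vanishes, and the gradient exponents $\tfrac{\bar p}{Np_i}$ sum to $1$, whence, inserting the energy bound,
\[
\iint_{[\sigma_{n+1},T]\times\R^N}(u-k_{n+1})_+^{\bar p_2}\le C\Big(\tfrac{C2^n}{\theta}(C2^n/k)^{q-2}Y_n\Big)^{1+\bar p/N}.
\]
(The truncation is not compactly supported in $x$; one applies \eqref{PS} on cubes $[-R,R]^N$ after multiplying by a spatial cutoff and lets $R\to\infty$, the extra terms vanishing because the truncations have globally integrable $L^{p_i}$-gradients by \eqref{ei3} and lie in $L^\infty(0,T;L^2(\R^N))$.) Finally, Hölder's and Chebyshev's inequalities descend from the level $\bar p_2$ back to the level $q$:
\[
Y_{n+1}\le\Big(\iint_{[\sigma_{n+1},T]\times\R^N}(u-k_{n+1})_+^{\bar p_2}\Big)^{q/\bar p_2}\big|\{(u-k_{n+1})_+>0\}\cap([\sigma_{n+1},T]\times\R^N)\big|^{1-q/\bar p_2},
\]
where the last measure is $\le(2^{n+1}/k)^qY_n$ by Chebyshev. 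Assembling the three displays yields a recursion $Y_{n+1}\le C\,b^n\,\theta^{-c_1}k^{-A}Y_n^{1+\beta}$ with $\beta=\tfrac{q\bar p}{N\bar p_2}>0$ and $b,C,c_1,A$ depending only on $N,{\bf p},q$. By Lemma \ref{iteration}, $Y_n\to0$---hence $u\le k$ a.e.\ on $[\theta,T]\times\R^N$---provided $\iint_{\theta/2}^T u_+^q$ does not exceed a fixed multiple of $k^{A/\beta}\theta^{c_1/\beta}$; solving for $k$ and using the identities $\beta/A=\bar p/\lambda_q$ and $c_1/A=(N+\bar p)/\lambda_q$ (which follow from $\bar p_2=\bar p(1+2/N)$ and $\lambda_q=N(\bar p-2)+\bar p q$) gives $k=C\,\theta^{-(N+\bar p)/\lambda_q}\big(\iint_{\theta/2}^T u_+^q\big)^{\bar p/\lambda_q}$. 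Repeating for $-u$ proves \eqref{c1}, and local boundedness is the case $q=2$, where the right-hand side is finite by Corollary \ref{corei}.

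\emph{Step 2 (the estimate \eqref{c2}).} Assume $\bar p_1>2$, so also $\bar p_2>2$ and part (1) applies. Applying \eqref{c1} with $q=2$ on the time interval $(0,2t)$ and using $\|u(\cdot,s)\|_2\le\|u_0\|_2$ (Corollary \ref{corei}) removes the $T$-dependence and produces the instantaneous bound $\|u(\cdot,t)\|_\infty\le C\,t^{-N/\lambda_2}\|u_0\|_2^{2\bar p/\lambda_2}$ for $t\in(0,T)$. Applying this to the translated solution $u(\cdot,\cdot+t/2)$---whose datum $u(\cdot,t/2)$ belongs to $L^1\cap L^2$ by Corollaries \ref{corei} and \ref{corL1}---and interpolating $\|u(\cdot,t/2)\|_2^2\le\|u(\cdot,t/2)\|_\infty\|u(\cdot,t/2)\|_1\le\|u(\cdot,t/2)\|_\infty\|u_0\|_1$, we obtain
\[
m(t)\le C\,t^{-N/\lambda_2}\|u_0\|_1^{\delta}\,m(t/2)^{\delta},\qquad m(t):=\|u(\cdot,t)\|_\infty,\quad\delta:=\tfrac{\bar p}{\lambda_2}.
\]
Here $\delta<1$ is equivalent to $\lambda_2>\bar p$, i.e.\ exactly to $\bar p_1>2$: this is where the hypothesis of part (2) enters. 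Iterating the recursion and killing the tail $m(t/2^k)^{\delta^k}\to1$ with the crude instantaneous bound just proved, the geometric series $\sum_{j\ge0}\delta^j=(1-\delta)^{-1}$ gives $m(t)\le C\,t^{-N/(\lambda_2(1-\delta))}\|u_0\|_1^{\delta/(1-\delta)}$; since $\lambda_2(1-\delta)=\lambda_2-\bar p=\lambda$ and $\delta/(1-\delta)=\bar p/\lambda$, this is \eqref{c2}.

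The main obstacle is Step 1: reaching the \emph{sharp} exponents $(N+\bar p)/\lambda_q$ and $\bar p/\lambda_q$ forces one to drive the iteration by the critical instance $\bar p/\bar p^*$ of the parameter in \eqref{PS}---the only one that introduces no spurious power of $T$---and then to climb back down from the $L^{\bar p_2}$-level to the $L^q$-level via the Hölder--Chebyshev step, tracking all exponents through Lemma \ref{iteration}. In Step 2 the only delicate point is that the self-improvement threshold $\delta<1$ coincides precisely with the standing assumption $\bar p_1>2$.
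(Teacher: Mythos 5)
Your Step 1 is essentially the paper's own proof of \eqref{c1}: the same De Giorgi iteration driven by the energy inequality \eqref{ei3}, the same Chebyshev/H\"older descent from the level $\bar p_2$ back to the level $q$, and the same critical instance $\theta=\bar p/\bar p^{*}$ of \eqref{PS} (chosen precisely so that no power of $T$ appears and the gradient exponents sum to $1$), leading to an identical recursion and the same choice of $k$; your exponents $\beta/A=\bar p/\lambda_{q}$ and $c_{1}/A=(N+\bar p)/\lambda_{q}$ agree with the paper's, and your remark about cutting off in space before invoking \eqref{PS} is, if anything, slightly more careful than the paper, which applies \eqref{PS} to the truncations directly. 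Step 2, however, is a genuinely different and correct route to \eqref{c2}. The paper fixes $q\in[2,\bar p_{1}[$, works on a fixed window with shrinking time thresholds $\theta_{n}=(T-\theta)/2^{n}$, bounds $\iint|v|^{q}\le M_{n+1}^{q-1}\iint|v|$, and closes with the interpolation lemma of \cite[Ch. I, Lemma 4.3]{DBbook}, the sub-unit exponent $(q-1)\bar p/\lambda_{q}<1$ being exactly $\bar p_{1}>2$. You instead extract from \eqref{c1} with $q=2$ and the $L^{2}$-contraction \eqref{l2} the smoothing bound $\|u(\cdot,t)\|_{\infty}\le C\,t^{-N/\lambda_{2}}\|u_{0}\|_{2}^{2\bar p/\lambda_{2}}$, and then bootstrap it along dyadic times $t/2^{k}$ using the $L^{1}$-contraction of Corollary \ref{corL1} and the slicewise interpolation $\|w\|_{2}^{2}\le\|w\|_{1}\,\|w\|_{\infty}$; the self-improvement threshold $\delta=\bar p/\lambda_{2}<1$ is again exactly $\bar p_{1}>2$, and the algebra $\lambda_{2}(1-\delta)=\lambda$, $\delta/(1-\delta)=\bar p/\lambda$ reproduces the sharp exponents. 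Both arguments hinge on the same structural fact (a sub-unit power equivalent to $\bar p_{1}>2$); the paper's version needs \eqref{c1} for a whole range of $q$ and an abstract sequence lemma, whereas your semigroup-style iteration uses only the case $q=2$ of \eqref{c1} together with the two contraction estimates, at the modest price of checking that the tail $m(t/2^{k})^{\delta^{k}}\to 1$, which your crude instantaneous bound indeed supplies.
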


\begin{proof}
First observe that $\bar p_{1}\leq \bar p_{2}$, so that we can assume $\bar p_{2}>2$, and in that case $\lambda_{q}\geq \lambda_{2}>0$ for all $q\geq 2$. The global condition $u\in \cap_{i=1}^{N}L^{p_{i}}(S_{T})$, together with Corollary  \ref{corei} and Theorem \ref{PAS}, imply that $u\in L^{\bar p_{2}}(S_{T})\cap L^{2}(S_{T})$. Therefore, by interpolation,
\[
u\in L^{q}(S_{T}),\qquad \text{for all $q\in [\min\{2, p_{1}\}, \max\{\bar p_{2}, p_{N}\}]$}.
\]
Let $k>0$ to be determined, $T> \theta>0$ and define for any $n\geq 0$
\[
 k_{n}=k-\frac{k}{2^{n}},\qquad \theta_{n}=\theta- \frac{\theta}{2^{n+1}},  \qquad S_{n}=\R^{N}\times [\theta_{n}, T], \qquad \psi_{n}(t)=\min\left\{1, \frac{2^{n+2}}{\theta}(t-\theta_{n})_{+}\right\},
\]
so that 
\[
\text{$\psi_{n}\equiv 1$ on $[\theta_{n+1}, T]$,}\qquad |(\psi_{n})_{t}|\leq \frac{2^{n+2}}{\theta}.
\]
Since $\psi_{n}(0)=0$, the energy estimate \eqref{ei3} reads
\begin{equation}
\label{eq2}
\sup_{t\in [\theta_{n}, T]}\int(u-k_{n})_{+}^{2}\, dx + \frac{1}{C}\sum_{i=1}^{N}\iint_{S_{n}}|D_{i}(u-k_{n})_{+}|^{p_{i}}\, dx\, dt  \leq \frac{2^{n+2}}{\theta}\iint_{S_{n}} (u-k_{n})_{+}^{2}\, dx\, dt.
\end{equation}
Choose $q\in [2, \bar p_{2}]$. If  $A_{n}:=\{(x, t)\in S_{n}: u\geq k_{n}\}$, Tchebichev's inequality yields
\begin{equation}
\label{A}
\iint_{S_{n}}(u-k_{n-1})_{+}^{q}\, dx\, dt\geq (k_{n}-k_{n-1})^{q}|A_{n}|=\frac{k^{q}}{2^{n\,q}}|A_{n}|
\end{equation}
so that, by H\"older's inequality and the monotonicity of $\{S_{n}\}$
\[
\iint_{S_{n}}(u-k_{n})_{+}^{2}\, dx\, dt\leq \left(\iint_{S_{n}}(u-k_{n})_{+}^{q}\, dx\, dt\right)^{\frac{2}{q}}|A_{n}|^{1-\frac{2}{q}}\leq \frac{2^{n(q-2)}}{k^{q-2}}\iint_{S_{n-1}}(u-k_{n-1})_{+}^{q}\, dx\, dt.
\]
Therefore \eqref{eq2} becomes
\begin{equation}
\label{erp}
\sup_{t\in [\theta_{n}, T]}\int(u-k_{n})_{+}^{2}\, dx + \frac{1}{C}\sum_{i=1}^{N}\iint_{S_{n}}|D_{i}(u-k_{n})_{+}|^{p_{i}}\, dx\, dt\leq C\,  \frac{2^{n\,(q-1)}}{\theta\, k^{q-2}}\iint_{S_{n-1}}(u-k_{n-1})_{+}^{q}\, dx\, dt
\end{equation}
By H\"older inequality and \eqref{A}
\[
\begin{split}
\iint_{S_{n}}(u-k_{n})_{+}^{q}\, dx\, dt&\leq |A_{n}|^{1-\frac{q}{\bar p_{2}}}\left(\iint_{S_{n}}(u-k_{n})_{+}^{\bar p_{2}}\, dx\, dt\right)^{\frac{q}{\bar p_{2}}}\\
&\leq C\, \left(\frac{2^{n\,q}}{k^{q}}\iint_{S_{n-1}}(u-k_{n-1})_{+}^{q}\, dx\, dt\right)^{1-\frac{q}{\bar p_{2}}}\left(\iint_{S_{n}}(u-k_{n})_{+}^{\bar p_{2}}\, dx\, dt\right)^{\frac{q}{\bar p_{2}}}.
\end{split}
\]
Applying \eqref{PS} for $\alpha_{i}\equiv 1$, $\sigma=2$ and $\theta=\frac{\bar p}{\bar p^{*}}=\frac{N}{N+2}$ gives $q=\bar p_{2}$ and thus by \eqref{erp}
\[
\begin{split}
\iint_{S_{n}}(u-k_{n})_{+}^{\bar p_{2}}\, dx\, dt&\leq \left(\sup_{t\in [\theta_{n}, T]}\int(u-k_{n})_{+}^{2}(x, t)\, dx\right)^{1-\frac{\bar p}{\bar p^{*}}}\prod_{i=1}^{N}\left(\iint_{S_{n}}|D_i(u-k_{n})_{+}|^{p_{i}}\, dx\, dt\right)^{\frac{\bar p}{N\, p_{i}}}\\
&\leq C \left(\frac{2^{n\,(q-1)}}{\theta\, k^{q-2}}\iint_{S_{n-1}}(u-k_{n-1})_{+}^{q}\, dx\, dt\right)^{1+\frac{\bar p}{N}}.
\end{split}
\]
Gathering together the previous two estimates, we obtain, for suitable $b>1$,
\[
\iint_{S_{n}}(u-k_{n})_{+}^{q}\, dx\, dt\leq \frac{C}{(\theta\, k^{q-2})^{\frac{q}{\bar p_{2}}(1+\frac{\bar p}{N})}}\frac{b^{n}}{k^{q\, (1-\frac{\bar p}{\bar p_{2}})}}
\left(\iint_{S_{n-1}}(u-k_{n-1})_{+}^{q}\, dx\, dt\right)^{(1+\frac{\bar p}{N})\, \frac{q}{\bar p_{2}}+1-\frac{q}{\bar p_{2}}}.
\]
Setting
\[
X_{n}=\iint_{S_{n}}(u-k_{n})_{+}^{q}\, dx\, dt, \qquad \alpha =\frac{q}{N+2}, \qquad \beta=\frac{q}{\bar p}\, \frac{N+\bar p}{N+2},\qquad \gamma=\frac{q}{\bar p}\, \frac{N\, (\bar p-2)+\bar p\, q}{N+2}
\]
 we thus obtained the recursive inequality
\[
X_{n}\leq \frac{C}{\theta^{\beta} \, k^{\gamma}}\, b^{n}\, X_{n-1}^{1+\alpha},\qquad n\geq 1.
\]
By the classical version of Lemma \ref{iteration} with $N=1$, we have that $X_{n}\to 0$ provided $X_{0}\leq C\theta^{\frac{\beta}{\alpha}}k^{\frac{\gamma}{\alpha}}$ for a suitable constant $C$ depending only on the data. Choosing $k$ such that equality holds we therefore get
\[
\sup_{\R^N\times [\theta, T]} u\leq k= \frac{C}{\theta^{\frac{N+\bar p}{\lambda_{q}}}}\left(\int_{\theta/2}^{T}\int_{\R^N} u_{+}^{q}\, dx\, dt\right)^{\frac{\bar p}{\lambda_{q}}}\qquad \forall \theta\in \ ]0, T[
\]
implying \eqref{c1} (by considering $-u$ as well). To prove \eqref{c2}, assume $\bar p_{1}\geq 2$ and choose $q\in [2, \bar p_{1}[$.
The function $v(x, t)=u(x, t+\theta)$ still solves the equation on $S_{T-\theta}$ and is bounded there by the previous estimate. Moreover $v\in \cap L^{p_{i}}(S_{T-\theta})$ hence \eqref{c1} holds, reading
\[
\sup_{\R^N\times [\tilde\theta, T-\theta]} |v|\leq \frac{C}{\tilde\theta^{\frac{N+\bar p}{\lambda_{q}}}}\left(\int_{\frac{\tilde \theta}{2}}^{T-\theta}\int |v|^{q}\, dx\, dt\right)^{\frac{\bar p}{\lambda_{q}}}\qquad \forall \tilde\theta\in \ ]0, T-\theta[.
\]
In the latter inequality we set
\[
\tilde\theta=\theta_{n}=\frac{T-\theta}{2^{n}},\qquad M_{n}=\sup_{\R^N\times [\theta_{n}, T-\theta]}|v|
\]
 to obtain 
\[
\begin{split}
M_{n}&\leq \frac{C}{\theta_{n}^{\frac{N+\bar p}{\lambda_{q}}}}\left(\int_{\theta_{n+1}}^{T-\theta}\int |v|^{q}\, dx\, dt\right)^{\frac{\bar p}{\lambda_{q}}}\leq C\, \frac{2^{n\, \frac{N+\bar p}{\lambda_{q}}}}{(T-\theta)^{\frac{N+\bar p}{\lambda_{q}}}}\left(\int_{0}^{T-\theta}\int |v|\, dx\, dt\right)^{\frac{\bar p}{\lambda_{q}}} M_{n+1}^{(q-1)\frac{\bar p}{\lambda_{q}}}
\end{split}
\]
By the boundedness of $v$ we infer the boundedness of $\{M_{n}\}$, while
\[
(q-1)\, \frac{\bar p}{\lambda_{q}}<1\quad \Leftrightarrow\quad \bar p_{1}>2.
\]
Therefore the interpolation lemma \cite[Ch. I, Lemma  4.3]{DBbook} provides
\[
M_{0}\leq  C\left(\frac{1}{(T-\theta)^{\frac{N+\bar p}{\lambda_{q}}}}\Big(\int_{0}^{T-\theta}\int |v|\, dx\, dt\Big)^{\frac{\bar p}{\lambda_{q}}}\right)^{\frac{1}{1-(q-1)\frac{\bar p}{\lambda_{q}}}}.
\]
Writing the latter in terms of $u$ and noting that 
\[
\lambda_{q}\big(1-(q-1)\frac{\bar p}{\lambda_{q}}\big)=\lambda_{1},
\]
we obtain
\[
\sup_{\R^N}|u(\cdot, T-\theta)|\leq C\, \frac{1}{(T-\theta)^{\frac{N+\bar p}{\lambda_{1}}}}\left(\iint_{\R^N\times [\theta, T]} |u|\, dx\, dt\right)^{\frac{\bar p}{\lambda_{1}}},\qquad \forall \theta\in \ ]0, T[.
\]
Using Corollary \ref{corL1} while setting $\tau:=T-\theta\in\  ]0, T[$ finally gives \eqref{c2}.
\end{proof}

Another type of $L^{\infty}$ estimate is the following, which instead is purely local.

\begin{lemma}
Let 
\begin{equation}
\label{condp}
p_{1}\leq \dots \leq p_{N}\quad \text{and}\quad  \max\{2, p_{N}\}<\bar p_{2}.
\end{equation}
Define for $k>0$ the  functions
\begin{equation}
\label{defgh}
g(k)=\sum_{i=1}^{N}k^{\bar p_{2}-p_{i}},\qquad h(k)=\left(\sum_{i=1}^{N}k^{p_{i}-\bar p_{2}}\right)^{-1},
\end{equation}
and for $T\in \R$, $M, \lambda>0$ the intrinsic rectangle
\[
Q_{\lambda, M}= \prod_{i=1}^{N}\left[-\lambda^{\frac{1}{p_{i}}}, \lambda^{\frac{1}{p_{i}}}\right]\times [T-M\, \lambda, T].
\]
If $u$ is a weak supersolution to \eqref{ws} in $Q_{\lambda, M}$, then
\begin{equation}
\label{supest}
\|u_{+}\|_{L^{\infty}(Q_{\lambda/2, M})}\leq g^{-1}(1/M)+ h^{-1}\left(C\Big(M\, \dashiint_{Q_{\lambda, M}} u_{+}^{\bar p_{2}}\, dx\Big)^{\frac{\bar p}{N+\bar p}}\right). 
\end{equation}
\end{lemma}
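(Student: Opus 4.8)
The plan is to run a De~Giorgi iteration on a family of \emph{intrinsically scaled} anisotropic cylinders shrinking to $Q_{\lambda/2,M}$, organised so that the $L^\infty$ bound separates into a ``base level'' $g^{-1}(1/M)$ and a ``gain'' governed by $h^{-1}$ of the integral term. Both $g$ and $h$ from \eqref{defgh} are increasing bijections of $[0,\infty)$ (resp.\ of $(0,\infty)$) because, by \eqref{condp}, every exponent $\bar p_2-p_i$ is strictly positive. Set $\ell_0:=g^{-1}(1/M)$, let $\mu>0$ be a parameter to be fixed, and for $n\ge 0$ put
\[
k_n=\ell_0+\mu\,(1-2^{-n}),\qquad \lambda_n=\tfrac{\lambda}{2}\,(1+2^{-n}),\qquad Q_n:=Q_{\lambda_n,M},\qquad X_n:=\iint_{Q_n}(u-k_n)_+^{\bar p_2}\,dx\,dt,
\]
so that $Q_0=Q_{\lambda,M}$, $Q_n\searrow Q_{\lambda/2,M}$ and $k_n\nearrow \ell_0+\mu$. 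The goal is to show $X_n\to0$ once $\ell_0+\mu\ge h^{-1}\big(C\,(M\dashiint_{Q_{\lambda,M}}u_+^{\bar p_2})^{\bar p/(N+\bar p)}\big)$; since then $(u-\ell_0-\mu)_+=0$ a.e.\ on $Q_{\lambda/2,M}$, this is exactly \eqref{supest}.

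For the recursive step I would first reproduce, for the truncations $(u-k_n)_+$, the computation behind the Energy Inequality \eqref{ei}. Testing the weak formulation in $Q_{n-1}$ against $\varphi=(u-k_n)_+\,\zeta_n\ge0$, where $\zeta_n(x,t)=\psi_n(t)\prod_i\eta_{n,i}^{p_i}(x_i)$ is of the form \eqref{test} with $\eta_{n,i}\equiv1$ on the $x_i$-side of $Q_n$ and supported in that of $Q_{n-1}$, and $\psi_n$ is a time cutoff vanishing at the bottom of $Q_{n-1}$ and $\equiv1$ on $[T-M\lambda_n,T]$ (the supersolution sign is precisely what the nonnegative test function requires), one gets
\[
\sup_{t}\int(u-k_n)_+^2\,\zeta_n\,dx+\frac1C\sum_i\iint\big|D_i\big((u-k_n)_+\zeta_n\big)\big|^{p_i}\,dx\,dt\le \frac{C\,2^n}{M\lambda}\iint_{Q_{n-1}}(u-k_n)_+^2\,dx\,dt+\frac{C}{\lambda}\sum_i 2^{np_i}\iint_{Q_{n-1}}(u-k_n)_+^{p_i}\,dx\,dt,
\]
where $|\psi_n'|\lesssim 2^n/(M\lambda)$ and $|D_i\zeta_n^{1/p_i}|^{p_i}\lesssim 2^{np_i}/\lambda$ — this last bound being where the intrinsic scaling $\lambda^{1/p_i}$ of the $x_i$-sides enters. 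I would then apply the parabolic anisotropic Sobolev inequality, Theorem~\ref{PAS}, to $(u-k_n)_+\zeta_n$ (which lies in $W^{1,1}_0$ of each time slice) with $\alpha_i\equiv1$, $\sigma=2$, and the \emph{critical} exponent $\theta=\bar p/\bar p^{*}$: this produces $q=\bar p_2$ and kills the time prefactor, since $(M\lambda)^{1-\theta\bar p^{*}/\bar p}=(M\lambda)^0=1$, so no residual time dependence survives.

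The core — and most delicate — point is to bound the right-hand side above by $\dfrac{C\,b^{\,n}}{\lambda\,h(\ell_0+\mu)}\,X_{n-1}$ for some $b=b(\bar p_2)>1$. On the support of $(u-k_n)_+$ one has $u>k_n\ge\ell_0=g^{-1}(1/M)$; combining this with Tchebichev's inequality $|\{u>k_n\}\cap Q_{n-1}|\le(k_n-k_{n-1})^{-\bar p_2}X_{n-1}=\mu^{-\bar p_2}2^{n\bar p_2}X_{n-1}$ and Hölder to interpolate the sub-critical powers against $\bar p_2$, the lower-order terms $\sum_i2^{np_i}\iint(u-k_n)_+^{p_i}$ collapse into the single factor $\big(\sum_i(\ell_0+\mu)^{p_i-\bar p_2}\big)X_{n-1}\asymp h(\ell_0+\mu)^{-1}X_{n-1}$, while the time-derivative term $\tfrac1{M\lambda}\iint(u-k_n)_+^2$ is absorbed into the same factor by using the defining relation $M\,g(\ell_0)=M\sum_i\ell_0^{\bar p_2-p_i}=1$ of the base level (this is precisely where $g$, rather than $h$, appears). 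Feeding this into the Sobolev estimate gives the recursion $X_n\le \hat C\,\hat b^{\,n}\,\big(\lambda\,h(\ell_0+\mu)\big)^{-(N+\bar p)/N}\,X_{n-1}^{\,1+\bar p/N}$. By the classical one-variable case of Lemma~\ref{iteration}, $X_n\to0$ provided $X_0\le c\,\big(\lambda\,h(\ell_0+\mu)\big)^{(N+\bar p)/\bar p}$; and since $X_0\le|Q_{\lambda,M}|\dashiint_{Q_{\lambda,M}}u_+^{\bar p_2}$ with $|Q_{\lambda,M}|=2^NM\lambda^{1+N/\bar p}$, the powers of $\lambda$ cancel exactly — the whole point of the intrinsic geometry — giving $X_0^{\bar p/(N+\bar p)}\le C\lambda\,\big(M\dashiint_{Q_{\lambda,M}}u_+^{\bar p_2}\big)^{\bar p/(N+\bar p)}$. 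Hence it suffices that $h(\ell_0+\mu)\ge C\big(M\dashiint_{Q_{\lambda,M}}u_+^{\bar p_2}\big)^{\bar p/(N+\bar p)}$, which is secured by choosing $\mu:=h^{-1}\big(C(M\dashiint_{Q_{\lambda,M}}u_+^{\bar p_2})^{\bar p/(N+\bar p)}\big)$, and then $\|u_+\|_{L^\infty(Q_{\lambda/2,M})}\le\ell_0+\mu$ is \eqref{supest}.

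I expect the main obstacle to be exactly the anisotropic bookkeeping of the third step. Unlike the isotropic case, no single parabolic rescaling can normalise the cylinder and all the $p_i$ simultaneously, and one must verify that after invoking $M\,g(\ell_0)=1$ the $N$ distinct lower-order contributions together with the time-derivative contribution genuinely collapse into the single factor $h(\ell_0+\mu)^{-1}$, with an iteration constant depending on $\lambda$ and $M$ only through the stated combination; the elementary but careful algebra relating the exponents $\bar p_2-p_i$, $p_i-\bar p_2$, $2$ and $\bar p_2$ is what makes the estimate close precisely at the level $g^{-1}(1/M)+h^{-1}(\cdots)$.
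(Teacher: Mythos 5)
Your skeleton coincides with the paper's proof (De Giorgi iteration over shrinking intrinsic cylinders, the energy inequality \eqref{ei} with cutoffs scaled by $\lambda^{1/p_i}$, Tchebichev/H\"older interpolation of the powers $2,p_i$ against $\bar p_{2}$, Theorem \ref{PAS} at the critical $\theta=\bar p/\bar p^{*}$, and Lemma \ref{iteration}), but the one ingredient you introduce that differs from the paper --- starting the truncation levels at $\ell_0=g^{-1}(1/M)$ and climbing only by $\mu$ --- is precisely where the argument breaks. All the gains produced by Tchebichev are powers of the increments $k_n-k_{n-1}=\mu\,2^{-n}$, not of $\ell_0+\mu$: H\"older gives $\iint_{Q_{n-1}}(u-k_n)_+^{p_i}\le 2^{n(\bar p_{2}-p_i)}\mu^{p_i-\bar p_{2}}X_{n-1}$, so the lower-order terms collapse to $h(\mu)^{-1}X_{n-1}$, not $h(\ell_0+\mu)^{-1}X_{n-1}$; knowing $u>\ell_0$ on the support gives no lower bound on $(u-k_{n-1})_+$ beyond $\mu 2^{-n}$, which is all the interpolation can use. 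That mislabeling alone is harmless (choosing $\mu=h^{-1}(C(\cdots))$ would still yield the additive bound $\ell_0+\mu$), but the same phenomenon is fatal for the time-derivative term: after Tchebichev it carries the factor $\frac{1}{M\,\mu^{\bar p_{2}-2}}$, and absorbing it into $\frac{C}{h(\mu)}$ requires $\frac1M\le C\sum_i\mu^{\,p_i-2}$. The identity $M\,g(\ell_0)=1$ gives nothing of the sort when $\mu\ll\ell_0$, and that regime is unavoidable: your $\mu=h^{-1}\big(C\big(M\dashiint_{Q_{\lambda,M}}u_+^{\bar p_{2}}\big)^{\bar p/(N+\bar p)}\big)$ is small whenever the averaged integral is small, while $\ell_0$ stays fixed. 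In that case the recursion does not close with the single factor $(\lambda h)^{-1}$, Lemma \ref{iteration} cannot be invoked with your threshold, and the bound $\ell_0+\mu$ is not reached.

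The paper closes this step by keeping a single level: the $k_n$ run from $0$ up to $k$, so both the time factor $\frac{1}{M k^{\bar p_{2}-2}}$ and the space factor $\frac{1}{h(k)}$ involve the same $k$, and the requirement $\frac{1}{Mk^{\bar p_{2}-2}}\le\frac{1}{h(k)}$ (encoded in \eqref{condg} as $k\ge g^{-1}(1/M)$) lets the time term be dominated by the space one; the iteration threshold then forces $k\ge h^{-1}(C(\cdots))$, and taking $k$ to be the larger of the two quantities gives \eqref{supest}. Your argument is repaired by reverting to that scheme: iterate $k_n=K(1-2^{-n})$ with $K=\ell_0+\mu$, so every Tchebichev gain is a power of $K$; then $K\ge g^{-1}(1/M)$ handles the time term, and the closing condition $h(K)\ge C(\cdots)$ follows from $K\ge\mu$ and the monotonicity of $h$. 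The base-shift device, as you wrote it, decouples the two roles that the level $k$ must play simultaneously.
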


\begin{proof}
Define for any $k>0$ and $\lambda, M, T$
\[
 k_{n}=k-\frac{k}{2^{n}},\quad \theta_{n}=T-\frac{M\, \lambda}{2}\left(1+ \frac{1}{2^{n}}\right), \quad r_{n, i}=\frac{\lambda^{\frac{1}{p_{i}}}}{2^{\frac{1}{p_{i}}}}\left(1+ \frac{1}{2^{n}}\right)
  \quad Q_{n}=\prod_{i=1}^{N} [-r_{n, i}, r_{n, i}]\times [\theta_{n}, T]
\]
so that $Q_{n+1}\subseteq Q_{n}\subseteq Q_{\lambda, M}$ for all $n\geq 0$ and formally $Q_{\infty}=Q_{\lambda/2, M}$. Construct functions $\eta_{n}\in C^{\infty}(Q_{n}; [0, 1])$ of the form \eqref{test} such that
\[
\left.\eta_{n}\right|_{\partial_{p}Q_{n}}\equiv 0,\qquad \left.\eta_{n}\right|_{Q_{n+1}}\equiv 1,\qquad |D_{i}\eta_{n}^{\frac{1}{p_{i}}}|\leq  \frac{C\, 2^{n}}{\lambda^{\frac{1}{p_{i}}}},\qquad  |(\eta_{n})_{t}|\leq \frac{C\, 2^{n}}{M\, \lambda}
\]
Apply \eqref{ei} to obtain
\[
\begin{split}
\sup_{t\in [\theta_{n}, T]}&\int(u-k_{n})_{+}^{2}\, \eta_{n}(x, t)\, dx + \sum_{i=1}^{N}\iint_{Q_{n}}\left|D_{i}\big((u-k_{n})_{+}\, \eta_{n}\big)\right|^{p_{i}}\, dx\, dt \\
& \leq C\, 2^{n}\left(\frac{1}{M\, \lambda}\iint_{Q_{n}} (u-k_{n})_{+}^{2}\, dx\, dt+\frac{1}{\lambda}\sum_{i=1}^{N}\iint_{Q_{n}}(u-k_{n})_{+}^{p_{i}}\, dx\, dt.\right).
\end{split}
\]
Letting $p_{0}=2$, $A_{n}=\{(x, t)\in Q_{n}: u\geq k_{n}\}$ and recalling \eqref{A}, it holds for any $i=0, \dots, N$
\[
 \iint_{Q_{n}} (u-k_{n})_{+}^{p_{i}}\, dx\, dt\leq \Big(\iint_{Q_{n}} (u-k_{n})_{+}^{\bar p_{2}}\, dx\, dt\Big)^{\frac{p_{i}}{\bar p_{2}}}|A_{n}|^{1-\frac{p_{i}}{\bar p_{2}}}\leq \frac{C b^{n}}{k^{\bar p_{2}-p_{i}}}\iint_{Q_{n-1}} (u-k_{n-1})_{+}^{\bar p_{2}}\, dx\, dt,
 \]
 for some numerical $b>1$, so that by the definition \eqref{defgh} we obtain
 \[
 \begin{split}
\sup_{t\in [\theta_{n}, T]}&\int(u-k_{n})_{+}^{2}\, \eta_{n}(x, t)\, dx + \sum_{i=1}^{N}\iint_{Q_{n}}\left|D_{i}\big((u-k_{n})_{+}\, \eta_{n}\big)\right|^{p_{i}}\, dx\, dt \\
& \leq C\, b^{n}\left(\frac{1}{M\, \lambda\, k^{\bar p_{2}-2}}+\frac{1}{\lambda\, h(k)}\right)\iint_{Q_{n-1}} (u-k_{n-1})_{+}^{\bar p_{2}}\, dx\, dt.
\end{split}
\]
As $\{k_{n}\}$ and $\{\theta_{n}\}$ are increasing and $\eta_{n}\equiv 1$ on ${\rm supp}(\eta_{n+1})$, 
\[
\begin{split}
\sup_{t\in [\theta_{n+1}, T]}\int\big((u-k_{n+1})_{+}\eta_{n+1}\big)^{2}(x, t)\, dx &\leq \sup_{t\in [\theta_{n}, T]}\int(u-k_{n})_{+}^{2}\, \eta_{n}(x, t)\, dx\\
&\leq C\, b^{n}\left(\frac{1}{M\, \lambda\, k^{\bar p_{2}-2}}+\frac{1}{\lambda\, h(k)}\right)\iint_{Q_{n-1}} (u-k_{n-1})_{+}^{\bar p_{2}}\, dx\, dt. 
\end{split}
\]
Again by the monotonicity of $\{k_{n}\}$ and $\{Q_{n}\}$ it holds
\[
\begin{split}
\sum_{i=1}^{N}\iint_{Q_{n+1}}\left|D_{i}\big((u-k_{n+1})_{+}\, \eta_{n+1}\big)\right|^{p_{i}}\, dx\, dt&\leq C\, b^{n+1}\left(\frac{1}{M\, \lambda\, k^{\bar p_{2}-2}}+\frac{1}{\lambda\, h(k)}\right)\iint_{Q_{n}} (u-k_{n})_{+}^{\bar p_{2}}\, dx\, dt\\
&\leq C\, b^{n}\left(\frac{1}{M\, \lambda\, k^{\bar p_{2}-2}}+\frac{1}{\lambda\, h(k)}\right)\iint_{Q_{n-1}} (u-k_{n-1})_{+}^{\bar p_{2}}\, dx\, dt
\end{split}
\]
Therefore, applying \eqref{PS} with $\alpha_{i}\equiv 1$, $\sigma=2$, $\theta= \bar p/\bar p^{*}$ and thus $q=\bar p_{2}$, we deduce, for some other constant $C, b\geq 1$, the recursive inequality
\[
\iint_{Q_{n+1}}(u-k_{n+1})_{+}^{\bar p_{2}}\, dx\, dt\leq C\, b^{n}\left(\frac{1}{M\, \lambda\, k^{\bar p_{2}-2}}+\frac{1}{\lambda\, h(k)}\right)^{1+\frac{\bar p}{N}}\Big(\iint_{Q_{n-1}} (u-k_{n-1})_{+}^{\bar p_{2}}\, dx\, dt\Big)^{1+\frac{\bar p}{N}}
\]
Now if $k$ is so large that
\begin{equation}
\label{condg}
\frac{1}{M\, k^{\bar p_{2}-2}}\leq \frac{1}{ h(k)},\quad \Leftrightarrow\quad k\geq g^{-1}(1/M),
\end{equation}
then the previous iterative inequality reads
\[
X_{n+1}\leq C\, b^{n}\left(\frac{1}{\lambda\, h(k)}\right)^{1+\frac{\bar p}{N}}X_{n}^{1+\frac{\bar p}{N}},\qquad n\geq 0
\]
where 
\[
X_{n}=\iint_{Q_{2 n}}(u-k_{2 n})_{+}^{\bar p_{2}}\, dx\, dt.
\]
By Lemma \ref{iteration} for $N=1$, $X_{n}\to 0$ whenever $X_{0}\leq C(\lambda\, h(k))^{\frac{N+\bar p}{\bar p}}$
and, taking account of \eqref{condg}, this in turn implies that 
\[
\sup_{Q_{\lambda/2, M}}u_{+}\leq \max\left\{ g^{-1}(1/M), h^{-1}\left(\frac{C}{\lambda}\Big(\iint_{Q_{0}} u_{+}^{\bar p_{2}}\, dx\, dt\Big)^{\frac{\bar p}{N+\bar p}}\right)\right\}.
\]
Note that $Q_{0}\subseteq Q_{\lambda, M}$ and 
\begin{equation}
\label{measQ}
|Q_{\lambda, M}|=M\, \lambda \prod_{i=1}^{N}\lambda^{\frac{1}{p_{i}}}=M\, \lambda^{\frac{N+\bar p}{\bar p}},
\end{equation}
so that being $h$ monotone increasing we obtain
\[
\sup_{Q_{\lambda/2, M}}u\leq \max\left\{g^{-1}(1/M), h^{-1}\left(C\Big(M\, \dashiint_{Q_{\lambda, M}} u_{+}^{\bar p_{2}}\, dx\, dt\Big)^{\frac{\bar p}{N+\bar p}}\right)\right\}.
\]

\end{proof}

The next  proof follows \cite{Kuusi}.

\begin{corollary}
Let \eqref{condp} hold and $u$ be a weak supersolution of \eqref{ws} in $\Omega_{T}$. Then $u$ has a lower semicontinuous representative.
\end{corollary}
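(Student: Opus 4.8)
The strategy is the De Giorgi-type argument for lower semicontinuity of supersolutions, adapted to the anisotropic intrinsic geometry of the preceding lemma; this is the scheme of \cite{Kuusi}. First I would record the a priori integrability needed to give meaning to \eqref{supest}: by \eqref{condp} one has $\max\{2,p_{N}\}<\bar p_{2}$, so a weak supersolution $u$, which lies in $C^{0}_{\rm loc}(0,T;L^{2}_{\rm loc}(\R^{N}))\cap L^{\bf p}_{\rm loc}(0,T;W^{1,{\bf p}}_{\rm loc}(\R^{N}))$, belongs to $L^{\bar p_{2}}_{\rm loc}(\Omega_{T})$ by Theorem \ref{corPAS} applied with $\sigma=2$. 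In particular the right-hand side of \eqref{supest} is finite, $u$ is locally bounded from below, and $\mathcal L^{N+1}$-a.e.\ point of $\Omega_{T}$ is an $L^{\bar p_{2}}$-Lebesgue point of $u$ with respect to the doubling family of intrinsic cylinders $\{Q_{\lambda,M}\}$.

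I would then introduce the candidate representative
\[
u_{*}(z_{0}):=\lim_{\rho\to0^{+}}\ \essinf_{B_{\rho}(z_{0})}u,\qquad z_{0}=(x_{0},t_{0})\in\Omega_{T},
\]
the limit existing by monotonicity in $\rho$. Lower semicontinuity of $u_{*}$ is purely formal: if $u_{*}(z_{0})>c$, pick $c'\in\ ]c,u_{*}(z_{0})[$ and $\rho$ with $u>c'$ a.e.\ on $B_{\rho}(z_{0})$; since $B_{\rho/2}(z_{1})\subseteq B_{\rho}(z_{0})$ for $z_{1}\in B_{\rho/2}(z_{0})$, we get $u_{*}\geq c'>c$ on $B_{\rho/2}(z_{0})$, so $\{u_{*}>c\}$ is open. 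Also $u_{*}\leq u$ a.e., because at a Lebesgue point $\essinf_{B_{\rho}(z_{0})}u\leq\dashint_{B_{\rho}(z_{0})}u\,dz\to u(z_{0})$. Hence the content of the corollary reduces to the reverse inequality $u_{*}\geq u$ a.e.

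To establish it, fix $\eps>0$ and let $z_{0}=(x_{0},t_{0})$ be an $L^{\bar p_{2}}$-Lebesgue point of $u$. Put $k:=u(z_{0})-\eps$ and note that $u-k$ is a weak supersolution of the equation with field $\widetilde A(x,s,z):=A(x,s+k,z)$, which again satisfies \eqref{gcond}; applying the preceding lemma to $u-k$, translated in space to be centred at $x_{0}$ and with its intrinsic cylinder placed with top at an arbitrary time $\tau$ (taking $\lambda$ small enough that the cylinder is compactly contained in $\Omega_{T}$), yields
\[
\essinf_{Q_{\lambda/2,M}(x_{0},\tau)}u\ \geq\ k-g^{-1}(1/M)-h^{-1}\!\left(C\Big(M\,\dashiint_{Q_{\lambda,M}(x_{0},\tau)}(u-k)_{-}^{\bar p_{2}}\,dx\,dt\Big)^{\frac{\bar p}{N+\bar p}}\right)
\]
for all $\lambda,M>0$. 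Taking $\tau=t_{0}$ and $\tau=t_{0}+M\lambda/2$, the union of the two cylinders $Q_{\lambda/2,M}(x_{0},\tau)$ is a full intrinsic neighbourhood $\mathcal N_{\lambda,M}(z_{0})$ of $z_{0}$, with time-interval $[t_{0}-M\lambda/2,\,t_{0}+M\lambda/2]$, containing a Euclidean ball $B_{r(\lambda,M)}(z_{0})$ with $r(\lambda,M)\to0$ as $\lambda\to0$, while the data cylinders $Q_{\lambda,M}(x_{0},\tau)$ also shrink to $z_{0}$. Since $(u-k)_{-}\leq|u-u(z_{0})|$ pointwise, the Lebesgue-point property forces $\dashiint_{Q_{\lambda,M}(x_{0},\tau)}(u-k)_{-}^{\bar p_{2}}\to0$ as $\lambda\to0$ with $M$ fixed, and $h^{-1}(0)=0$, so $\liminf_{\lambda\to0}\ \essinf_{\mathcal N_{\lambda,M}(z_{0})}u\geq k-g^{-1}(1/M)$ for every $M$. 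Finally $g^{-1}(1/M)\to0$ as $M\to\infty$, a point where \eqref{condp} enters decisively through $\bar p_{2}>p_{N}$, which gives $g(0^{+})=0$; therefore $u_{*}(z_{0})\geq u(z_{0})-\eps$, and letting $\eps\to0$ we obtain $u_{*}\geq u$ a.e. Thus $u=u_{*}$ a.e., and $u_{*}$ is the sought lower semicontinuous representative.

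I expect the main obstacle to be the time-direction asymmetry in the last paragraph: a lower bound for a parabolic supersolution does not propagate backward in time, so a full space-time neighbourhood of $z_{0}$ is not of the form $Q_{\lambda,M}$ (whose time-interval ends at its top), which is why one must combine the two choices $\tau=t_{0}$ and $\tau=t_{0}+M\lambda/2$. The accompanying bookkeeping is also delicate: one must drive $g^{-1}(1/M)$ to zero via $M\to\infty$ while simultaneously keeping $\lambda\to0$ so that the intrinsic cylinders still collapse to $z_{0}$, and one must use Lebesgue points adapted to the anisotropic cylinders $Q_{\lambda,M}$ rather than to Euclidean balls, since the volume ratio $|B_{\rho}|/|Q_{\lambda,M}|$ may be unbounded as $\lambda\to0$. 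The remaining ingredients --- the a priori $L^{\bar p_{2}}_{\rm loc}$ bound, the shift of levels, and the translation to an arbitrary centre --- are routine.
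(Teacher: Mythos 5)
Your proposal is correct and follows essentially the same route as the paper's proof (the scheme of \cite{Kuusi}): $L^{\bar p_{2}}$-Lebesgue points with respect to the intrinsic doubling geometry, the sup-estimate \eqref{supest} applied to the reflected, level-shifted function $k-u$, choosing $M$ large so that $g^{-1}(1/M)$ is small and using the Lebesgue-point property to make the $h^{-1}$ term negligible, and finally taking the $\liminf$ of essential infima as the representative. The only differences are cosmetic: the paper argues by contradiction over the symmetric metric balls $B_{r,M}$ (intersecting the Lebesgue-point sets over $M\in\N$), whereas you argue directly and cover a full space--time neighbourhood with two intrinsic cylinders, and you make explicit the $L^{\bar p_{2}}_{\rm loc}$ integrability via Theorem \ref{corPAS} which the paper leaves implicit.
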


\begin{proof}
Recall that all the infima and suprema are taken in the essential sense. 
For any $M\in \N$ define a metric in $\Omega_{T}$ as 
\[
{\rm dist}_{M}\big((x,t),  (x', t')\big)=\max \{M|t-t'|, |x_{1}-x_{1}'|^{\frac{1}{p_{1}}},\dots, |x_{N}-x_{N}'|^{\frac{1}{p_{N}}}\},
\]
with corresponding balls $B_{r, M}$. We will prove that there is  a ${\rm dist}_{1}$-metric essential lower-semicontinuous representative of $u$ and we start by fixing an arbitrary one, which we'll still denote by $u$.
By \eqref{measQ}, $\Omega_{T}$ with the induced metric and the Lebesgue measure is a locally doubling measure space. Therefore the set $E_{M}$ of Lebesgue points for $u$ has full measure, as well as $E=\cap_{M\in \N}E_{M}$. We can therefore suppose that for any $(x_{0}, t_{0})\in E$ and for every $M\in \N$
\[
\lim_{r\downarrow 0}\dashiint_{B_{r, M}} |u(x, t)-u(x_{0}, t_{0})|^{\bar p_{2}}\, dx\, dt=0.
\]
We claim that for any $(x_{0}, t_{0})\in E$
\begin{equation}
\label{lsceq2}
u(x_{0}, y_{0})\leq \lim_{r\to 0}\inf_{B_{r, 1}(x_{0}, t_{0})} u.
\end{equation}
Suppose by contradiction that 
\begin{equation}
\label{lsceq}
u(x_{0}, y_{0})-\inf_{B_{r, 1}(x_{0}, t_{0})} u\geq \eps>0\qquad \forall r<r_{0}
\end{equation}
and consider the solution $v=u(x_{0}, t_{0})-u$ to \eqref{ws}. Since $g(0)=h(0)=0$, $g$ and $h$ are continuous and increasing, we can choose $M>0$ such that 
\[
g^{-1}(1/M)+h^{-1}(C/M)<\eps/2,
\]
($C$ being the constant in \eqref{supest}) and, being $(x_{0}, t_{0})\in E_{M}$, choose $r(M)<r_{0}$ such that 
\[
B_{2r, M}(x_{0}, t_{0})\subseteq \Omega_{T},\qquad \dashiint_{B_{2r, M}(x_{0}, t_{0})} |u-u(x_{0}, t_{0})|^{\bar p_{2}}\, dx\, dt\leq \frac{1}{M^{2+\frac{N}{\bar p}}}.
\]
The previous Theorem (applied to $v(x-x_{0}, t)$) then assures that 
\[
\begin{split}
\sup_{B_{r,1}(x_{0}, t_{0})} u(x_{0}, t_{0})-u&\leq \sup_{B_{r, M}(x_{0}, t_{0})}u(x_{0}, t_{0})-u\\
&\leq g^{-1}(1/M)+h^{-1}\left(C\Big(M\, \dashiint_{B_{2r, M}(x_{0}, t_{0})} (u-u(x_{0}, t_{0}))_{+}^{\bar p_{2}}\, dx\, dt\Big)^{\frac{\bar p}{N+\bar p}}\right)\\
&\leq g^{-1}(1/M)+h^{-1}(C/M)<\eps/2
\end{split}
\]
contradicting \eqref{lsceq}. Finally, for  $(x_{0}, t_{0})\in \Omega_{T}\setminus E$ we modify the representative forcing the equality in \eqref{lsceq2}. 
\end{proof}

\section{Proof of the main Theorem}\label{section5}

Suppose that for a subset of $M$ indexes, $1\leq M\leq N$, it holds $p_{j_{1}}, \dots, p_{j_{M}}>2$.
We perform a permutation of the variables letting $x_{j_{1}}, \dots, x_{j_{N}}$ be the last $M$ ones and splitting $\R^{N}$ as $\R^{N-M}\times \R^{M}$ with  $\R^{N}\ni x=(x', x'')\in \R^{N-M}\times\R^{M}$.   
Therefore we can assume henceforth that 
\begin{equation}
\label{pm}
p_{j}>2,\qquad \text{for all $j=N-M+1, \dots, N$, \quad $M\geq 1$}.
\end{equation}
Furthermore, we will set, $K'_{r}=\{x'\in \R^{N-M}:|x'|\leq r\}$, $K''_{r}=\{x''\in \R^{M}:|x''|\leq r\}$ and say that 
\[
u\in L^{q}_{\rm loc}\big(\R^{M}; L^{q}(\R^{N-M}\times [0, T])\big)\quad \Leftrightarrow\quad \int_{0}^{T}\int_{\R^{N-M}\times K''_{r}}|u|^{q}\, dx'\, dx''\, dt <+\infty
\]
for any $r>0$.

\begin{lemma}
\label{BS}
Assume \eqref{pm}, $\bar p<N$ and $\bar p_{2}>\max\{p_{1},\dots, p_{N}\}$. Let $u\in \cap_{i=1}^{N}L^{p_{i}}_{\rm loc}\big(\R^{M}; L^{p_{i}}(\R^{N-M}\times [0, T])\big)$ be a weak solution of the Cauchy problem \eqref{DP} in $S_{T}$, with $u_{0}\in L^{2}(\R^N)$ and 
\[
{\rm supp}(u_{0})\subseteq \R^{N-M}\times K''_{R_{0}}.
\]
There exists $\alpha, \beta, c>0$ such that, letting,
\[
\tau(u, r, T):=c\, r^{\alpha} \left(\sum_{i=1}^{N} \int_{0}^{T}\int_{\R^{N-M}\times K''_{3r}} |u|^{p_{i}}\, dx\right)^{-\beta}, \qquad r\geq 2\, R_{0}
\]
then $u(\cdot, t)\equiv 0$ on $\R^{N-M}\times (K''_{2r}\setminus K''_{r})$ for all $t\in [0, \tau(u, r, T)]$.

\end{lemma}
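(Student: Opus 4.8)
The plan is a De~Giorgi iteration at level $k=0$ on cylinders that shrink, in the slow variables $x''$, onto the target set $\R^{N-M}\times(K''_{2r}\setminus K''_{r})\times(0,\tau)$. The geometric point is that, since $r\ge 2R_{0}$, any region $\R^{N-M}\times\{a\le|x''|\le b\}$ with $R_{0}<a$ and $b\le 3r$ is disjoint from $\R^{N-M}\times K''_{R_{0}}\supseteq{\rm supp}(u_{0})$ yet still contained in $\R^{N-M}\times K''_{3r}$; hence $u_{0}$ vanishes on the whole region where the iteration lives, while the data term defining $\tau$ controls all the relevant integrals. Concretely, fix $a_{n}\uparrow r$ and $b_{n}\downarrow 2r$ with $a_{0}\in(R_{0},r)$, $b_{0}\in(2r,3r)$ and consecutive gaps comparable to $r\,2^{-n}$, put $S_{n}=\R^{N-M}\times\{a_{n}\le|x''|\le b_{n}\}$, $Q_{n}=S_{n}\times(0,\tau)$, and choose cutoffs $\zeta_{n}=\zeta_{n}(x'')$ of the product form required by the energy inequality \eqref{ei}, with $\zeta_{n}\equiv 1$ on $S_{n+1}$, ${\rm supp}\,\zeta_{n}\subseteq S_{n}$ and $|D_{i}\zeta_{n}^{1/p_{i}}|\le C\,2^{n}/r$ for $i=N-M+1,\dots,N$ (if $K''$ denotes Euclidean balls, one first covers the shell by finitely many coordinate boxes and argues on each; this is routine and I suppress it).

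\emph{Step 1: a limiting energy inequality, uniform in the fast variables.} This is the heart of the matter: $u$ need not be compactly supported in $x'$, so one cannot test with a globally supported function, nor use a spatial H\"older inequality on the infinite-measure set $S_{n}$. I would apply \eqref{ei} with $k=0$ to $\pm u$ and test function $\eta=\zeta_{n}\,\psi_{L}$, where $\psi_{L}$ is a standard product cutoff with $\psi_{L}\equiv 1$ on $K'_{L}$ and ${\rm supp}\,\psi_{L}\subseteq K'_{2L}$; then let $t_{1}\downarrow 0$ (legitimate since $u(\cdot,t)\to u_{0}\equiv 0$ in $L^{2}_{\rm loc}$ on ${\rm supp}\,\zeta_{n}$), and finally let $L\to\infty$. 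Each term carrying a $D_{x'}$-derivative of $\psi_{L}$ is bounded by $C\,L^{-p_{i}}\int_{0}^{T}\!\int_{\R^{N-M}\times K''_{3r}}|u|^{p_{i}}$ and so disappears in the limit, \emph{precisely because} $u\in L^{p_{i}}_{\rm loc}(\R^{M};L^{p_{i}}(\R^{N-M}\times(0,T)))$; on the other terms one uses monotone convergence and Fatou. The outcome is, for every $n$,
\[
\sup_{0<t<\tau}\int_{S_{n+1}}u^{2}(\cdot,t)\,dx+\sum_{i=1}^{N}\int_{0}^{\tau}\!\!\int_{S_{n+1}}|D_{i}u|^{p_{i}}\,dx\,dt\ \le\ C\sum_{j=N-M+1}^{N}\frac{2^{n p_{j}}}{r^{p_{j}}}\int_{0}^{\tau}\!\!\int_{S_{n}}|u|^{p_{j}}\,dx\,dt ,
\]
all quantities being finite; the finiteness of the (globally-in-$x'$) left-hand side follows from that of the right-hand side, and it is what makes the rest of the proof independent of the fast-variable scale.

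\emph{Step 2: the Sobolev step and the birth of the $\tau$-power.} Apply the parabolic anisotropic embedding of Theorem~\ref{PAS} with $\alpha_{i}\equiv 1$, $\sigma=2$ and $\theta=\bar p/\bar p^{*}$ — so that $q=\bar p_{2}$ and the explicit $T$-exponent in \eqref{PS} vanishes — to $u\,\zeta_{n+1}\,\psi_{L}$ on the rectangle $K'_{2L}\times\{a_{n+1}\le|x''|\le b_{n+1}\}$, once more letting $L\to\infty$ and estimating the right-hand side by Step~1. Since $1-\bar p/\bar p^{*}=\bar p/N$ and $\sum_{i}\bar p^{*}/(N p_{i})=\bar p^{*}/\bar p$, this gives
\[
\int_{0}^{\tau}\!\!\int_{S_{n+2}}|u|^{\bar p_{2}}\,dx\,dt\ \le\ C\Big(\sum_{j=N-M+1}^{N}\frac{2^{n p_{j}}}{r^{p_{j}}}\int_{0}^{\tau}\!\!\int_{S_{n}}|u|^{p_{j}}\,dx\,dt\Big)^{1+\bar p/N}.
\]
To obtain a closed recursion for $X_{n}:=\int_{0}^{\tau}\!\int_{S_{n}}|u|^{\bar p_{2}}$, I would \emph{not} compare $\int_{S_{n}}|u|^{p_{j}}$ with $\int_{S_{n}}|u|^{\bar p_{2}}$ by a spatial H\"older inequality (again $|S_{n}|=\infty$), but interpolate through quantities that remain finite: as $2<p_{j}<\bar p_{2}$ (here the hypothesis $\bar p_{2}>\max_{i}p_{i}$ is used), writing $1/p_{j}=\lambda_{j}/2+(1-\lambda_{j})/\bar p_{2}$ and applying H\"older \emph{in time} — admissible because $(1-\lambda_{j})p_{j}<\bar p_{2}$ — yields
\[
\int_{0}^{\tau}\!\!\int_{S_{n}}|u|^{p_{j}}\ \le\ C\,\tau^{\,1-(1-\lambda_{j})p_{j}/\bar p_{2}}\Big(\sup_{0<t<\tau}\int_{S_{n}}u^{2}\Big)^{\lambda_{j}p_{j}/2}\Big(\int_{0}^{\tau}\!\!\int_{S_{n}}|u|^{\bar p_{2}}\Big)^{(1-\lambda_{j})p_{j}/\bar p_{2}},
\]
which is precisely where the power of $\tau$ enters.

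\emph{Step 3: iteration and conclusion.} Writing also $Y_{n}:=\sup_{0<t<\tau}\int_{S_{n}}u^{2}$ and combining the last three displays (homogenizing, e.g. via $Z_{n}:=X_{n}+Y_{n}^{\bar p_{2}/2}$), one reaches a recursion of the form $Z_{n+1}\le C\,b^{\,n}(\tau^{\gamma_{1}}r^{-\gamma_{2}})^{\delta}Z_{n}^{\,1+\delta}$ for $n\ge 0$, with $b>1$ and $\gamma_{1},\gamma_{2},\delta>0$ depending only on $N$ and ${\bf p}$. By Lemma~\ref{iteration} in the scalar case, $Z_{n}\to 0$ as soon as $Z_{0}$ lies below the associated threshold, quantitatively $\tau^{\gamma_{1}}\le c\,r^{\gamma_{2}}Z_{0}^{-1}$; and $Z_{0}$ is bounded — by one further application of Steps~1--2 on a shell sitting inside $\R^{N-M}\times K''_{3r}$ and outside $\R^{N-M}\times K''_{R_{0}}$, possible since $r\ge 2R_{0}$ — by a fixed power of $r^{-\gamma_{3}}\sum_{i=1}^{N}\int_{0}^{T}\!\int_{\R^{N-M}\times K''_{3r}}|u|^{p_{i}}$. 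Unravelling, the condition on $\tau$ becomes exactly an inequality of the shape $\tau\le c\,r^{\alpha}\big(\sum_{i=1}^{N}\int_{0}^{T}\!\int_{\R^{N-M}\times K''_{3r}}|u|^{p_{i}}\big)^{-\beta}$ for suitable $\alpha,\beta,c>0$, which is how the constants in the statement are produced. Finally $Z_{n}\to 0$ forces $X_{\infty}=Y_{\infty}=0$; as $S_{n}\downarrow\R^{N-M}\times(K''_{2r}\setminus K''_{r})$, this gives $\sup_{0<t<\tau}\int_{\R^{N-M}\times(K''_{2r}\setminus K''_{r})}u^{2}(\cdot,t)\,dx=0$, i.e. $u(\cdot,t)\equiv 0$ there for every $t\in[0,\tau]$. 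The main obstacle throughout is the one isolated in Step~1: reconciling the non-compactness in the fast variables $x'$ with the use of the anisotropic Sobolev inequality (which demands zero boundary data) and extracting the $\tau$-scaling from a purely temporal H\"older step rather than from a spatial measure factor, which would be infinite.
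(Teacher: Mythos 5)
Your Steps 1 and 2 are sound and essentially parallel the paper's opening moves: shells in $x''$ shrinking onto $K''_{2r}\setminus K''_{r}$ and disjoint from ${\rm supp}(u_{0})$ because $r\ge 2R_{0}$, an auxiliary cutoff in $x'$ whose derivative terms vanish as $L\to\infty$ thanks to $u\in\cap_{i}L^{p_{i}}_{\rm loc}\big(\R^{M};L^{p_{i}}(\R^{N-M}\times[0,T])\big)$, and the energy inequality \eqref{ei} at level $k=0$. The genuine gap is in Step 3, in the claim that $Z_{n}=X_{n}+Y_{n}^{\bar p_{2}/2}$ satisfies a superlinear recursion $Z_{n+1}\le C\,b^{n}(\cdot)\,Z_{n}^{1+\delta}$. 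Tracking your own exponents: the time interpolation gives $\int_{0}^{\tau}\!\int_{S_{n}}|u|^{p_{j}}\le C\,\tau^{\epsilon_{j}}\,Y_{n}^{\frac{\bar p_{2}-p_{j}}{\bar p_{2}-2}}X_{n}^{\frac{p_{j}-2}{\bar p_{2}-2}}\le C\,\tau^{\epsilon_{j}}Z_{n}^{p_{j}/\bar p_{2}}$, so the Sobolev step yields $X_{n+2}\lesssim b^{n}(\cdot)\,Z_{n}^{\frac{p_{j}}{\bar p_{2}}(1+\frac{\bar p}{N})}$, and $\frac{p_{j}}{\bar p_{2}}\big(1+\frac{\bar p}{N}\big)=\frac{p_{j}(N+\bar p)}{\bar p(N+2)}$ need \emph{not} exceed $1$: it is $\le 1$ whenever a slow exponent satisfies $p_{j}(N+\bar p)\le\bar p(N+2)$, e.g.\ $N=10$, ${\bf p}=(3,\dots,3,2.05)$, where $\bar p\approx 2.87<N$, $\bar p_{2}\approx 3.44>p_{\max}=3$ (so all hypotheses of the lemma hold) but the exponent is about $0.77$. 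With a sublinear power of $Z_{n}$ the factor $b^{n}$ cannot be absorbed, Lemma \ref{iteration} does not apply, and $Z_{n}\to 0$ does not follow. The obstruction is intrinsic to this homogenization: any $Z=X^{s}+Y^{t}$ runs into the same degree count, because the $X$-branch of the recursion carries $u$-homogeneity $p_{j}(1+\bar p/N)$, which may fall below $\bar p_{2}$.

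The repair is to close the iteration on the quantities that actually appear in the energy inequality, $W_{n}=\sum_{j}\int_{0}^{\tau}\!\int_{S_{n}}|u|^{p_{j}}$, rather than on the pair $(X_{n},Y_{n})$: apply your interpolation once more on $S_{n+2}$, bounding $\int_{0}^{\tau}\!\int_{S_{n+2}}|u|^{p_{j}}$ by $\tau^{\epsilon_{j}}\,Y_{n+1}^{\frac{\bar p_{2}-p_{j}}{\bar p_{2}-2}}X_{n+2}^{\frac{p_{j}-2}{\bar p_{2}-2}}$ and then, via Steps 1--2, by powers of $W_{n}$; the resulting exponent is $1+\frac{(p_{j}-2)\,\bar p}{N(\bar p_{2}-2)}>1$ for every slow $j$, the positive $\tau$-power $\epsilon_{j}=1-\frac{p_{j}-2}{\bar p_{2}-2}>0$ survives (this is where $p_{j}<\bar p_{2}$ is used), and the anisotropic Lemma \ref{iteration} then gives $W_{n}\to0$ under exactly a smallness condition of the form $\tau\le c\,r^{\alpha}\big(\sum_{i}\iint|u|^{p_{i}}\big)^{-\beta}$. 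This is what the paper does in a single stroke: instead of $\theta=\bar p/\bar p^{*}$ it applies Theorem \ref{PAS} with $\theta_{j}=\frac{p_{j}-2}{\bar p^{*}-2}\in\big(0,\frac{\bar p}{\bar p^{*}}\big)$, so that $q=p_{j}$, the recursion runs directly on the $p_{j}$-integrals with exponents $1+\theta_{j}\bar p^{*}/N>1$ (the same numbers as above), and the power of $\tau$ comes from the factor $T^{1-\theta_{j}\bar p^{*}/\bar p}$ in \eqref{PS} rather than from a separate H\"older-in-time step.
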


\begin{proof}
Let $\tau\in \ ]0, \min\{1,T\}[$, $R\geq 1$, $r\geq 2\, R_{0}$ define
\[
r_{n}=2\, r+\frac{r}{2^{n}}, \qquad s_{n}=r-\frac{r}{2^{n+1}},\qquad E_{n}=K_{r_{n}}''\setminus K_{s_{n}}''
\]
so that $E_{n+1}''\subseteq E_{n}''$ and $\R^{N-M}\times E_{n}''\cap {\rm supp}\, u_{0}=\emptyset$. Choose $\eta'\in C^{\infty}_{c}(K'_{2\, R}; [0, 1])$ of the form \eqref{test} such that
\[
\eta'\equiv 1\ \text{on $K_{R}'$} ,\qquad  |D_{i}(\eta')^{\frac{1}{p_{i}}}|\leq \frac{C}{R},
\]
for $i=1,\dots, N-M$ and $\eta_{n}''\in C^{\infty}_{c}(E_{n}; [0, 1])$ of the form \eqref{test} and such that 
\[
\eta_{n}''\equiv 1\ \text{on $E_{n+1}$}, \qquad |D_{i}(\eta_{n}'')^{\frac{1}{p_{i}}}|\leq C\,\frac{2^n}{r}
\]
for $i=N-M+1,\dots, N$.
We apply \eqref{ei} for $k=0$, $\eta_{n}(x)=\eta'(x')\eta_{n}''(x'')$ and $t_{1}\to 0$ to both $u$ and $-u$.  Since $u\in C^{0}([0, T[; L^{2}_{{\rm loc}}(\R^N))$, and ${\rm supp} (u_{0})\cap {\rm supp}(\eta_{n})=\emptyset$, we obtain
\[
\int u^{2}(x, t)\, \eta_{n}(x)\, dx +\frac{1}{C}\sum_{i=1}^{N}\int_{0}^{t}\int|D_{i}(u\, \eta_{n})|^{p_{i}}\, dx\, dt\leq  C\sum_{i=1}^{N}\int_{0}^{t}\int|u|^{p_{i}}|D_{i}\eta_{n}^{\frac{1}{p_{i}}}|^{p_{i}}\, dx\, dt
\]
for any $t\in [0, \tau]$. Letting $R\to +\infty$ and using the assumption $u\in \cap_{i=1}^{N}L^{p_{i}}_{\rm loc}\big(\R^{M}; L^{p_{i}}(\R^{N-M}\times [0, T])\big)$ together with $|D_{i}(\eta')^{\frac{1}{p_{i}}}|\leq C/R$, all the terms for $i=1,\dots, N-M$ on the right hand side vanish. Therefore, through Fatou's Lemma on the left and dominated convergence on the remaining terms on the right, we infer
\[
\begin{split}
\int u^{2}(x, t)\, \eta_{n}''(x'')\, dx +\frac{1}{C}\sum_{i=1}^{N}\int_{0}^{t}\int|D_{i}(u\, \eta_{n}'')|^{p_{i}}\, dx\, dt&\leq  C\sum_{N-M+1}^{N}\int_{0}^{t}\int|u|^{p_{i}}|D_{i}(\eta_{n}'')^{\frac{1}{p_{i}}}|^{p_{i}}\, dx\, dt\\
&\leq C\sum_{N-M+1}^{N}\frac{2^{n\, p_{i}}}{r^{p_{i}}}\int_{0}^{t}\int_{\R^{N-M}\times E_{n}}|u|^{p_{i}}\, dx\, dt.
\end{split}
\]
By the usual monotonicity argument, the latter implies
\[
\int (u\, \eta_{n+1}'')^{2}(x, t)\, dx +\frac{1}{C}\sum_{i=1}^{N}\int_{0}^{t}\int|D_{i}(u\, \eta_{n+1}'')|^{p_{i}}\, dx\, dt\leq C\sum_{N-M+1}^{N}\frac{2^{n\, p_{i}}}{r^{p_{i}}}\int_{0}^{t}\int_{\R^{N-M}\times E_{n}}|u|^{p_{i}}\, dx\, dt.
\]
Let 
\[
X_{n}:=\sum_{N-M+1}^{N}\frac{2^{n\, p_{i}}}{r^{p_{i}}}\int_{0}^{t}\int_{\R^{M}\times E_{n}}|u|^{p_{i}}\, dx\, dt.
\]
For any $j=N-M+1, \dots, N$, apply \eqref{PS}  with 
\[
\alpha_{i}\equiv 1,\qquad \sigma=2,\qquad \theta=\theta_{j}=\frac{p_{j}-2}{\bar p^{*}-2},\qquad q=p_{j}
\]
 (notice that $2< p_{j}$ and $\bar p_{2}>p_{N}$ imply $0< \theta_{j}< \frac{\bar p}{\bar p^{*}}$ for all $j=N-M+1, \dots, N$), to obtain 
\[
\begin{split}
\int_0^{\tau}&\int_{\R^{N-M}\times E_{n+2}} |u|^{p_{j}}\, dx\, dt\leq \int_0^{\tau}\int |u\, \eta_{n+1}''|^{p_{j}}\, dx\, dt\\
&\leq C\, \tau^{1-\theta_{j}\frac{\bar p^{*}}{\bar p}}\left(\sup_{t\in [0, \tau]}\int (u\, \eta_{n+1}'')^{2}(x, t)\, dx\right)^{1-\theta_{j}}\prod_{i=1}^{N}\left(\int_{0}^{\tau}\int |D_{i}(u\, \eta_{n+1}'')|^{p_{i}}\, dx\, dt\right)^{\frac{\theta_{j}\,\bar p^{*}}{N\, p_{i}}}\\
&\leq C\, \tau^{1-\theta_{j}\frac{\bar p^{*}}{\bar p}}\, X_{n}^{1-\theta_{j}}\prod_{i=1}^{N}X_{n}^{\frac{\theta_{j}\,\bar p^{*}}{N\, p_{i}}}=C\, \tau^{1-\theta_{j}\frac{\bar p^{*}}{\bar p}}\, X_{n}^{1+\theta_{j}\frac{\bar p^{*}}{N}}.
\end{split}
\]
Let 
\[
p_{\rm max}=\max\{p_{j_{1}},\dots, p_{j_{M}}\},\qquad p_{\rm min}=\min\{p_{j_{1}}, \dots, p_{j_{M}}\}.
\]
Since for any $n\geq 0$
\[
X_{n}\leq  C\, \frac{2^{n\, p_{\rm max}}}{r^{p_{\rm min}}}\sum_{N-M+1}^{N}\int_{0}^{\tau}\int_{\R^{N-M}\times E_{n}}|u|^{p_{i}}\, dx\, dt,
\]
where  defining 
\[
Y_{n}:=\sum_{N-M+1}^{N}\int_{0}^{\tau}\int_{\R^{N-M}\times E_{3n}}|u|^{p_{i}}\, dx\, dt
\]
we obtained the recursive inequality
\[
Y_{n+1}\leq C\, \frac{2^{n\, p_{\rm max}}\tau^{1-\theta_{\rm max}\frac{\bar p^{*}}{\bar p}}}{r^{p_{\rm min}}}\, \frac{1}{M}\sum_{N-M+1}^{N}Y_{n}^{1+\theta_{j}\frac{\bar p^{*}}{N}}.
\]
Setting
 \[
 \gamma=\frac{N\,p_{\rm min}}{\theta_{\rm min} \,\bar p^{*}},\quad \delta=\frac{N}{\theta_{\rm min}}\left(\frac{1}{\bar p^{*}}-\frac{\theta_{\rm max}}{\bar p}\right),
 \]
 Lemma \ref{iteration} ensures that for some other constant $C=C(N, {\bf p}, \Lambda)$
 \[
 Y_{0}\leq C\, \frac{r^{\gamma}}{\tau^{\delta}},\quad  \Rightarrow\qquad \text{$Y_{n}\to 0$},
 \] 
 which in turn implies $u(x, t)\equiv 0$ for $x\in K''_{2r}\setminus K''_{r}$ and $t\in [0, \tau]$ and proves the claim for $\alpha=\gamma/\delta$, $\beta=1/\delta$.
  \end{proof}
It is useful to state the previous Lemma in the case when all the $p_{i}$'s are greater than $2$.

\begin{corollary}
Suppose $2<p_{1}\leq \dots\leq p_{N}<\bar p_{2}$ and let $u$ solve \eqref{DP} for $u_{0}\in L^{2}(\R^{N})$ with ${\rm supp}(u_{0})\subseteq K_{R_{0}}$.  There exists $\alpha, \beta, c>0$ such that, letting,
\[
\tau(u, r, T):=c\, r^{\alpha} \left(\sum_{i=1}^{N} \int_{0}^{T}\int_{K_{3r}} |u|^{p_{i}}\, dx\right)^{-\beta}, \qquad r\geq 2\, R_{0}
\]
then $u(\cdot, t)\equiv 0$ on $K_{2\, r}\setminus K_{r}$ for all $t\in [0, \tau(u, r, T)]$.
\end{corollary}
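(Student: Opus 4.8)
The plan is to recognize this corollary as the particular instance $M=N$ of Lemma \ref{BS} and to verify only that the hypotheses of that lemma hold in the present situation. First I would specialize the notation of Section \ref{section5}: with $M=N$ the factor $\R^{N-M}$ collapses to a point, so $x=x''$, the sets $K''_{r}$ become $K_{r}$, the cutoffs $\eta'$ in the variables $x'$ drop out, and the condition ${\rm supp}(u_{0})\subseteq K_{R_{0}}$ is literally ${\rm supp}(u_{0})\subseteq\R^{N-M}\times K''_{R_{0}}$. Next I would read off the structural requirements of Lemma \ref{BS} from the chain $2<p_{1}\le\dots\le p_{N}<\bar p_{2}$: the standing restriction $\bar p<N$; condition \eqref{pm} (every $p_{j}>2$, with $M=N\ge1$); the hypothesis $\bar p_{2}>\max\{p_{1},\dots,p_{N}\}$; and $\max\{2,p_{N}\}=p_{N}<\bar p_{2}$, which is assumption \eqref{assump} of Theorem \ref{corPAS} with $\sigma=2$.

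The one point needing an actual argument, as opposed to a mere translation of notation, is that a solution of \eqref{DP} in the sense fixed in Section \ref{section2} already lies in $\cap_{i=1}^{N}L^{p_{i}}_{\rm loc}\big(\R^{N};L^{p_{i}}([0,T])\big)$, the membership assumed in Lemma \ref{BS}; indeed the definition of weak solution only provides $D_{i}u\in L^{p_{i}}_{\rm loc}$, not $u\in L^{p_{i}}_{\rm loc}$. Here I would argue as follows: $u\in L^{\bf p}(0,T;W^{1,{\bf p}}_{\rm loc}(\R^{N}))$ by definition, and $u\in C^{0}([0,T[;L^{2}_{\rm loc}(\R^{N}))$ with $u(\cdot,0)=u_{0}$ forces $u\in L^{\infty}_{\rm loc}(0,T;L^{2}_{\rm loc}(\R^{N}))$; since $\max\{2,p_{N}\}<\bar p_{2}$, the second embedding in Theorem \ref{corPAS} with $\sigma=2$ then yields $u\in L^{\bar p_{2}}_{\rm loc}(0,T;L^{\bar p_{2}}_{\rm loc}(\R^{N}))$, whence, using $p_{i}\le p_{N}<\bar p_{2}$ and H\"older's inequality on bounded cylinders, the required $L^{p_{i}}$ bounds.

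Having verified the hypotheses, the final step is simply to apply Lemma \ref{BS} with $M=N$: it returns constants $\alpha,\beta,c>0$ together with the conclusion $u(\cdot,t)\equiv0$ on $\R^{N-M}\times(K''_{2r}\setminus K''_{r})=K_{2r}\setminus K_{r}$ for all $t\in[0,\tau(u,r,T)]$, where $\tau(u,r,T)=c\,r^{\alpha}\big(\sum_{i=1}^{N}\int_{0}^{T}\int_{K_{3r}}|u|^{p_{i}}\big)^{-\beta}$ and $r\ge2R_{0}$, which is precisely the assertion. I do not foresee any genuine obstacle here; the single substantive ingredient is the use of Theorem \ref{corPAS} to upgrade the built-in membership $D_{i}u\in L^{p_{i}}_{\rm loc}$ to the full local $L^{p_{i}}$ integrability of $u$, and it is exactly at this point that the strengthened hypothesis $p_{N}<\bar p_{2}$ (rather than merely $\bar p<N$) is exploited.
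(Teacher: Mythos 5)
Your proposal is correct and follows essentially the same route as the paper: the paper's proof likewise reduces the corollary to Lemma \ref{BS} with $M=N$, noting that the hypothesis $u\in \cap_{i}L^{p_{i}}_{\rm loc}\big(\R^{M}; L^{p_{i}}(\R^{N-M}\times [0, T])\big)$ becomes $u\in \cap_{i}L^{p_{i}}(0,T;L^{p_{i}}_{\rm loc}(\R^{N}))$, which is obtained from Theorem \ref{corPAS} and the condition $p_{N}<\bar p_{2}$. Your explicit verification of the local $L^{p_{i}}$ integrability (via the a priori membership in $L^{\bf p}(0,T;W^{1,{\bf p}}_{\rm loc})\cap L^{\infty}(0,T;L^{2}_{\rm loc})$, the second embedding of Theorem \ref{corPAS} with $\sigma=2$, and H\"older on bounded cylinders) is exactly the argument the paper invokes, just spelled out in more detail.
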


\begin{proof}
In this case the assumption $u\in \cap_{i=1}^{N}L^{p_{i}}_{\rm loc}\big(\R^{M}; L^{p_{i}}(\R^{N-M}\times [0, T])\big)$ reduces to $u\in \cap_{i=1}^{N}L^{p_{i}}(0, T; L^{p_{i}}_{\rm loc}(\R^{N}))$. But this follows from Theorem \ref{corPAS} and the condition $p_{N}<\bar p_{2}$.
\end{proof}

We can finally prove the main result of the paper. In the case $p_{i}>2$ for all $i=1, \dots, N$ we can choose $M=N$, so that, as in the previous proof, the condition $u\in \cap_{i=1}^{N}L^{p_{i}}_{\rm loc}\big(\R^{M}; L^{p_{i}}(\R^{N-M}\times [0, T])\big)$ reduces to $u\in \cap_{i=1}^{N}L^{p_{i}}(0, T; L^{p_{i}}_{\rm loc}(\R^{N}))$. As long as $\max\{p_{1}, \dots, p_{N}\}<\bar p_{2}$, this directly follows from the condition of being a weak solution together with Theorem \ref{corPAS}, as by H\"older's inequality
\[
L^{p_{i}}(0, T; L^{p_{i}}_{\rm loc}(\R^{N}))\subseteq L^{\infty}(0, T; L^{2}_{\rm loc}(\R^{N}))\cap L^{\bar p_{2}}(0, T; L^{\bar p_{2}}_{\rm loc}(\R^{N}))
\]
for all $i=1, \dots, N$.
\begin{theorem}\label{mainteo}
Assume \eqref{pm}, $\bar p<N$ and  
\begin{equation}
\label{condp2}
\bar p_{1}=\bar p\, \left(1+\frac{1}{N}\right)> \max\{p_{1}, \dots, p_{N}\}.
\end{equation}
Let $u\in \cap_{i=1}^{N}L^{p_{i}}_{\rm loc}\big(\R^{M}; L^{p_{i}}(\R^{N-M}\times [0, T])\big)$ be a weak solution of the Cauchy problem \eqref{DP} in $S_{T}$ with $u_{0}\in L^{2}(\R^N)\cap L^{1}(\R^{N})$ such that 
\[
\emptyset\neq {\rm supp}(u_{0})\subseteq \R^{N-M}\times K''_{R_{0}}.
\]
Then there is a branch $\tilde u\neq 0$ of $u$ such that for any $j=N-M+1, \dots, N$  
\begin{equation}
\label{fg}
{\rm supp} (\tilde u(\cdot, t))\subseteq \{|x_{j}|\leq R_{j}(t)\},
\end{equation}
for any $t\in\, ]0, T[$, where 
\[
R_{j}(t)=2\, R_{0}+Ct^{\frac{N\, (\bar p-p_{j})+\bar p}{p_{j}\, \lambda}}\|u_{0}\|_{1}^{\frac{\bar p}{p_{j}}\frac{p_{j}-2}{\lambda}},\qquad \lambda=N(\bar p -2)+\bar p.
\]
\end{theorem}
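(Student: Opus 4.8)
\medskip

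\emph{Strategy and reductions.}
Since $M\ge 1$, \eqref{condp2} forces $\bar p_1=\bar p\,(1+\tfrac1N)>p_N\ge p_{N-M+1}>2$; in particular $\bar p_1>2$ and $\bar p_2\ge\bar p_1>\max_i p_i$, so that Lemma \ref{BS} applies and — for any function already known to lie in $\cap_i L^{p_i}(S_T)$ — Corollary \ref{corL1} and the $L^1$--$L^\infty$ bound \eqref{c2} become available as well. Because a generic local solution need not have finite speed of propagation, the argument splits in two stages: first we manufacture a branch $\tilde u$ of $u$ whose support is bounded in the $x''$ variables; then, using that such a $\tilde u$ is globally integrable, we feed its $L^1$--$L^\infty$ decay into a one–directional refinement of the mechanism behind Lemma \ref{BS} to obtain the sharp radii $R_j(t)$.

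\emph{Stage 1: a branch compactly supported in $x''$.}
The branch is produced by excising $u$ outside a growing box. For a nondecreasing $\rho=(\rho_{N-M+1},\dots,\rho_N)$ with $\rho_j(0)>R_0$ put $G(t)=\R^{N-M}\times\prod_{j}[-\rho_j(t),\rho_j(t)]$ and $\tilde u:=u\,\chi_{G(t)}$. The key point is that, by the \emph{orthotropic} structure of $A$ together with $A(x,s,0)=0$ (which follows from \eqref{gcond}), whenever $u(\cdot,t)$ vanishes on an open neighbourhood of $\partial G(t)$ one has a.e.
\[
A_i(x,\tilde u,D\tilde u)=A_i(x,u,Du)-A_i\big(x,u\,\chi_{G(t)^c},D(u\,\chi_{G(t)^c})\big),
\]
whence $\tilde u=u-u\,\chi_{G(t)^c}$ is a weak solution of \eqref{ws} (the $t$–dependence of $G$ is inert in the distributional time derivative since $u$ carries no mass near $\partial G(t)$, and the regularity $\tilde u\in C^0(0,T;L^2_{\rm loc})\cap L^{\bf p}_{\rm loc}(0,T;W^{1,{\bf p}}_{\rm loc})$ is preserved). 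As $\tilde u=u$ on $G(t)\supseteq{\rm supp}(\tilde u(\cdot,t))$ and $G(0)\supseteq{\rm supp}(u_0)$, $\tilde u$ is a branch of $u$ solving \eqref{DP}, with $\tilde u\neq 0$. An admissible $\rho$ is obtained by applying Lemma \ref{BS} to $u$ itself — legitimate by the standing integrability hypothesis — which yields, for every $r\ge 2R_0$, a positive time up to which $u$ vanishes on $\R^{N-M}\times(K''_{2r}\setminus K''_r)$, together with an exhaustion argument over $[0,T)$: the exhaustion is initiated by the \emph{local} finite speed of propagation read off from the energy inequality \eqref{ei} (legitimate since $u$ is locally bounded under \eqref{condp2}), and its apparent circularity — a support bound is needed to have the $L^1$--$L^\infty$ decay, which is in turn what controls the speed — is broken by observing that, once $\tilde u$ exists on $[0,T')$ with bounded $x''$–support, it already lies in $\cap_i L^{p_i}(S_{T'})$, so re-feeding \eqref{c2} and Corollary \ref{corL1} into Lemma \ref{BS} shows its $x''$–support on $[0,T')$ is in fact bounded by a constant independent of $T'<T$; the exhaustion therefore closes at $T$. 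In particular $\tilde u\in\cap_i L^{p_i}(S_T)$ and
\[
\|\tilde u(\cdot,t)\|_1\le\|u_0\|_1,\qquad \|\tilde u(\cdot,t)\|_\infty\le C\,t^{-N/\lambda}\,\|u_0\|_1^{\bar p/\lambda},\qquad 0<t<T .
\]

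\emph{Stage 2: the sharp radii.}
Fix a slow direction $j$ and re-run the proof of Lemma \ref{BS} with $M=1$ and $x_j$ in the role of $x''$ (admissible since $p_j>2$); then only the single power $p_j$ enters the iterated quantity and — crucially — one \emph{retains} the sharp factor $r^{-p_j(1+\theta_j\bar p^{*}/N)}$ in the recursion, whereas the proof of Lemma \ref{BS} only needs the weaker $r^{-p_j}$ (here $\theta_j=\frac{p_j-2}{\bar p^{*}-2}$). The resulting criterion for $\tilde u(\cdot,t)$ to vanish on $\{|x_j|\ge r\}$ reads
\[
\int_0^{t}\!\!\int_{\{|x_j|\simeq r\}}|\tilde u|^{p_j}\,dx\,ds\le C\,r^{\,\gamma_j}\,t^{-\delta_j},\qquad \gamma_j=p_j\Big(1+\frac{N}{\theta_j\bar p^{*}}\Big),\quad \delta_j=\frac{N}{\theta_j\bar p^{*}}-\frac N{\bar p}.
\]
Its left side is bounded, via the two estimates of Stage 1 (inserting a time–translation $v(\cdot,s):=\tilde u(\cdot,s+t/2)$ when $p_j\ge\bar p_1-1$, to absorb the singularity of $\|\tilde u(\cdot,s)\|_\infty$ at $s=0$), by
\[
\|u_0\|_1\int_0^{t}\|\tilde u(\cdot,s)\|_\infty^{p_j-1}\,ds\ \le\ C\,\|u_0\|_1^{\,a_j}\,t^{\,b_j},\qquad a_j=\frac{\bar p p_j+N(\bar p-2)}{\lambda},\quad b_j=\frac{N(\bar p-p_j)+\bar p-N}{\lambda},
\]
so that solving the criterion for $r$ gives that $\tilde u(\cdot,t)$ vanishes for $|x_j|\ge C\,t^{(b_j+\delta_j)/\gamma_j}\|u_0\|_1^{a_j/\gamma_j}$; the identities $(b_j+\delta_j)/\gamma_j=\frac{N(\bar p-p_j)+\bar p}{p_j\lambda}$ and $a_j/\gamma_j=\frac{\bar p}{p_j}\frac{p_j-2}{\lambda}$ are an elementary verification using $\lambda=N(\bar p-2)+\bar p$ and $\bar p^{*}=\frac{N\bar p}{N-\bar p}$. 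Setting $R_j(t)=2R_0+C\,t^{\frac{N(\bar p-p_j)+\bar p}{p_j\lambda}}\|u_0\|_1^{\frac{\bar p}{p_j}\frac{p_j-2}{\lambda}}$ and (harmlessly) re-excising $\tilde u$ outside $\R^{N-M}\times\prod_j[-R_j(t),R_j(t)]$ yields \eqref{fg}; the exponent of $t$ is positive precisely because of \eqref{condp2} and degenerates to $0$ as $p_j\uparrow\bar p_1$, matching the introduction.

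\emph{Main difficulty.}
The delicate part is Stage 1: proving rigorously that the excised function $u\,\chi_{G(t)}$ is a genuine global weak solution in the sense of \eqref{ws} — the orthotropic cancellation must be checked across the null set $\partial G(t)$ and through the motion of $G$ — and closing the exhaustion over $[0,T)$ by breaking the circularity between the support bound and the $L^1$--$L^\infty$ decay. Stage 2 is then essentially a bookkeeping of exponents, the only real subtlety being to keep track of the sharp power of $r$.
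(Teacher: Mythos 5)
Your Stage 1 (cutting $u$ outside a region where Lemma \ref{BS} guarantees an annulus of vanishing, then continuing by an exhaustion/maximality argument in time) is in substance the paper's own branch construction, and your final exponent bookkeeping in Stage 2 is algebraically correct: with $\gamma_j=p_j\bigl(1+\tfrac{N}{\theta_j\bar p^{*}}\bigr)$, $\delta_j=\tfrac{N}{\theta_j\bar p^{*}}-\tfrac{N}{\bar p}$ one indeed recovers $R_j(t)$. The genuine gap is in the intermediate estimate of Stage 2. You run the De Giorgi iteration on the \emph{integer} power $|u|^{p_j}$, so the left-hand side of your criterion is bounded by $\|u_0\|_1\int_0^{t}\|\tilde u(\cdot,s)\|_\infty^{p_j-1}\,ds$, and with \eqref{c2} this integral converges only if $\tfrac{N}{\lambda}(p_j-1)<1$, i.e. $p_j<\bar p_1-1$, which is strictly stronger than \eqref{condp2}. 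The excluded range $\bar p_1-1\le p_j<\bar p_1$ is not an edge case: for the model orthotropic situation $p_i\equiv p\in\,]2,N[$ one always has $p>\bar p_1-1$, so your bound fails there. The parenthetical time-translation $v(\cdot,s)=\tilde u(\cdot,s+t/2)$ does not repair this as stated: the vanishing criterion on the annuli $\{r\le|x_j|\le 2r\}$ requires the datum at the (translated) initial time to vanish on a neighbourhood of the annuli, hence only radii $r\ge 2\rho(t/2)$ (twice the support radius at time $t/2$) are admissible; iterating dyadically in time then produces non-summable doubling factors $2^k\rho(t2^{-k})$, and no control on $\rho(t)$ of the claimed form follows. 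One would need a local-in-space increment version of the criterion, which you have not supplied.

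This is exactly the obstruction the paper circumvents by lowering the power: it tests with $F_\eps(u)\approx|u|^{1+\mu}$ (Lemma \ref{Lei2}) and runs the iteration on $\int\!\!\int|u|^{p_j+\mu-1}$ with $0<\mu<\bar p_1-p_j$ (nonempty precisely under \eqref{condp2}), using the fractional-power anisotropic Sobolev embedding of Proposition \ref{SOBE}/Theorem \ref{PAS} with $\alpha_i=\tfrac{p_i+\mu-1}{p_i}<1$. Then the time integral $\int_0^t\|u(\cdot,s)\|_\infty^{p_j+\mu-2}\,ds$ converges from $s=0$, no translation is needed, and the $\mu$-dependence cancels in the final radius. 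So either adopt the fractional-power route, or supply a genuinely new increment-type iteration that tolerates a non-integrable $L^\infty$ blow-up at $t=0$; as written, your proof only covers $p_j<\bar p_1-1$.
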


\begin{proof}
We start proving \eqref{fg} for the whole $u$ assuming that
\begin{equation}
\label{assth}
u\in \cap_{i=1}^{N}L^{p_{i}}(S_{T}).
\end{equation}
Since $2<p_{N}\leq \max\{p_{1}, \dots, p_{N}\}<\bar p_{1}$, Theorem \ref{thL1Linf}, point {\em 2)} applies, ensuring \eqref{c2}. Choose $\mu\in\  ]0, 1[$ and for any $\eps>0$ apply \eqref{F} with 
\[
F_{\eps}(s)=\int_{0}^{s} \tau\, (\tau^{2}+\eps^{2})^{\frac{\mu-1}{2}}\, d\tau,\qquad F_{\eps}''(s)=\frac{\mu \, s^{2}+\eps^{2}}{(s^{2}+\eps^{2})^{\frac{3-\mu}{2}}}>0.
\]
All the assumptions of Lemma \ref{Lei2} hold true, except the boundedness of $F'$, which however is not necessary being $u$ bounded on $S_{T}\setminus S_{t}$, $t>0$ by \eqref{c2}.
Using 
\[
\mu(s^{2}+\eps^{2})^{\frac{\mu-1}{2}}\leq  F''_{\eps}(s),\qquad |F_{\eps}'(u)|^{p_{i}}|F_{\eps}''(u)|^{1-p_{i}}\leq |u|^{p_{i}}(u^{2}+\eps^{2})^{\frac{\mu-1}{2}}
\]
for $i=1, \dots, N$, we get for all $0<t_{1}<t_{2}<T$ and $\eta$ of the form \eqref{test}
\[
\begin{split}
\left.\int F_{\eps}(u(x, t))\, \eta(x)\, dx\right|^{t_{2}}_{t_{1}}&+\frac{\mu}{2\, \Lambda}\sum_{i=1}^{N}\int_{t_{1}}^{t_{2}}\int (u^{2}+\eps^{2})^{\frac{\mu-1}{2}}\, \eta \, |D_{i}u|^{p_{i}}\, dx\, dt\\
&\leq C_{\Lambda}\sum_{i=1}^{N}\int_{t_{1}}^{t_{2}}\int|u|^{p_{i}}(u^{2}+\eps^{2})^{\frac{\mu-1}{2}}|D_{i}\eta^{\frac{1}{p_{i}}}|^{p_{i}}\, dx\, dt.
\end{split}
\]
Being $\mu\in \ ]0, 1[$, by monotone convergence on all the terms we obtain
\begin{equation}
\label{eix}
\begin{split}
\left.\int |u(x, t)|^{1+\mu}\, \eta(x)\, dx\right|^{t_{2}}_{t_{1}}&+\frac{1}{C}\sum_{i=1}^{N}\int_{t_{1}}^{t_{2}}\int  \eta\, |u|^{\mu-1} \, |D_{i}u|^{p_{i}}\, dx\, dt\\
&\leq C\sum_{i=1}^{N}\int_{t_{1}}^{t_{2}}\int|u|^{p_{i}+\mu-1}\, |D_{i}\eta^{\frac{1}{p_{i}}}|^{p_{i}}\, dx\, dt
\end{split}
\end{equation}
for some constant $C=C(\mu, \Lambda)>0$. A further monotone convergence argument shows that the previous estimate holds true for any $0\leq t_{1}\leq t_{2}\leq T$. Let $j\geq N-M+1$ and choose $\tilde\eta, \psi\in C^{\infty}_{c}(\R; [0, 1])$
\[
\eta(x)=\tilde\eta(x_{j})^{p_{j}}\prod_{i\neq j}\psi(x_{i})^{p_{i}} 
\]
with the properties 
\[
\tilde\eta\lfloor_{\{|s|\leq R_{0}\}}\equiv 0, \qquad \psi\lfloor_{\{|s|\leq R\}}\equiv 1, \qquad |\psi'|\leq C/R.
\]
With this test function, we let $t_{1}=0$, $t_{2}=T$ and $R\to +\infty$ in \eqref{eix}: all the terms on the right except the $j$-th one vanish, since \eqref{assth} holds and $u\in L^{1}(S_{T})$ by \eqref{corL1}, therefore by interpolation $u\in L^{p_{i}+\mu-1}(S_{T})$. On the left-hand side the term for $t_{1}=0$ vanishes since ${\rm supp}(u_{0})\subseteq \{|x_{j}|\leq R_{0}\}$ and on the other  we apply Fatou's lemma, to obtain
\begin{equation}
\label{eixx}
\int |u(x, T)|^{1+\mu}\, \tilde\eta\, dx+\frac{1}{C}\sum_{i=1}^{N}\int_{0}^{T}\int  \tilde\eta^{p_{j}}\, |u|^{\mu-1} \, |D_{i}u|^{p_{i}}\, dx\, dt\leq C\int_{0}^{T}\int|u|^{p_{j}+\mu-1}\, |D_{j}\tilde\eta|^{p_{j}}\, dx\, dt
\end{equation}
where we set for brevity $\tilde\eta(x)=\tilde\eta(x_{j})$. We specify further the function $\tilde\eta$ defining for  $r>2R_{0}$, $n\in \N$
\[
r_{n}=2\, r+\frac{r}{2^{n}}, \qquad s_{n}=r-\frac{r}{2^{n+1}}, \qquad E_{n}=\{x\in \R^{N}:s_{n}\leq |x_{j}|\leq r_{n}\},
\]
and for suitable $\tilde\eta_{n}\in C^{\infty}(\R; [0, 1])$,
\begin{equation}
\label{ghj}
\tilde\eta_{n}\equiv 1 \text{ on $E_{n+1}$},\qquad |\tilde\eta'_{n}|\leq C\, \frac{2^{n}}{r}, \qquad {\rm supp}(\tilde\eta_{n})\subseteq  \{s_{n}\leq |s|\leq r_{n}\}.
\end{equation}
Let finally
\[
\beta_{i}=\frac{p_{i}+\mu-1}{p_{i}}<1,\qquad \beta=\min\{\beta_{i}: j=1,\dots, N\},\qquad  \eta_{n}(x):=\tilde\eta_{n}^{1/\beta}(x_{j}).
\]
Clearly $\eta_{n}$ still satisfies \eqref{ghj}, while being $0\leq \tilde\eta_{n}\leq 1$
\[
\left|D_{i}|\eta_{n}u|^{\beta_{i}}\right|^{p_{i}}=\eta_{n}^{\beta_{i}\, p_{i}}\, \left|D_{i}|u|^{\beta_{i}}\right|^{p_{i}}\leq \beta_{i}^{p_{i}}\, \tilde\eta_{n}^{p_{i}}\, |u|^{\mu-1}\, |D_{i}u|^{p_{i}},
\]
for all $i\neq j$, where we used $\beta_{i}\geq \beta$ and \eqref{dalpha2} in the last inequality. Therefore \eqref{eixx} for $\tilde\eta=\tilde\eta_{n}$ provides 
\begin{equation}
\label{llp}
\int_{0}^{T}\int\left|D_{i}|\eta_{n}\, u|^{\beta_{i}}\right|^{p_{i}}\, dx\, dt\leq \frac{C\, 2^{n}}{r^{p_{j}}}\int_{0}^{T}\int_{E_{n-1}}|u|^{p_{j}+\mu-1}\, dx\, dt
\end{equation}
for all $i\neq j$, where we used the properties in \eqref{ghj} and the monotonicity of $E_{n}$. When $i=j$ we similarly have 
\[
\left|D_{j}|\eta_{n}\, u|^{\beta_{j}}\right|^{p_{j}}\leq \frac{C\, 2^{n}}{r^{p_{j}}}|u|^{p_{j}+\mu-1}+\beta_{j}^{p_{j}}\, \eta_{n}^{\beta_{j}\, p_{j}}\, |u|^{\mu-1}\, |D_{j}u|^{p_{j}}\leq \frac{C\, 2^{n}}{r^{p_{j}}}|u|^{p_{j}+\mu-1}+\beta_{j}^{p_{j}}\, \tilde\eta_{n}^{p_{j}}\, |u|^{\mu-1}\, |D_{j}u|^{p_{j}},
\]
giving \eqref{llp} for $i=j$ as well.
Since \eqref{eixx} also implies  
\[
\int |\tilde\eta_{n} \, u|^{1+\mu}(x, t)\, dx\leq \int_{E_{n}}\tilde\eta_{n-1}\, |u|^{1+\mu}(x, t)\, dx\leq \frac{C\, 2^{n}}{r^{p_{j}}}\int_{0}^{T}\int_{E_{n-1}}|u|^{p_{j}+\mu-1}\, dx\, dt
\]
for any $t\in \ ]0, T]$, we can apply Proposition \ref{PAS} with parameters 
\[
\sigma=1+\mu,\quad \alpha_{i}=\beta_{i}=\frac{p_{i}+\mu-1}{p_{i}},\quad  \theta=\frac{p_{j}-2}{p^{*}_{\alpha}-\mu -1},\quad q=p_{j}+\mu-1.
\]
Substitution gives
\[
\tilde\alpha=N\left(\frac{1}{\bar p'}+\frac{\mu}{\bar p}\right), \quad  \theta=(p_{j}-2)\frac{N-\bar p}{\lambda_{1+\mu}},
\]
where we recall that 
\[
\lambda_{1+\mu}=N\, (\bar p-2)+(1+\mu)\, \bar p.
\]
The necessary condition $\theta\in \ [0,  \frac{\bar p}{\bar p^{*}}]$ reads, after some algebraic manipulations, as
\[
p_{j}\leq \bar p\, \left(1+\frac{\mu+1}{N}\right)\quad \Leftrightarrow\quad \mu\geq N\, \left(\frac{p_{j}}{\bar p}-1\right)-1,
\]
and the latter quantity is always negative under assumption \eqref{condp2}.
Therefore for any $\mu\in \ ]0, 1[$ \eqref{PS} gives, through the previous estimates and some algebra
\[
\int_{0}^{T}\int |\tilde\eta_{n} \, u|^{p_{j}+\mu-1}\, dx\, dt\leq C\, 2^{n}\, T^{1-(p_{j}-2)\, \frac{N}{\lambda_{1+\mu}}}\left(\frac{2}{r^{p_{j}}}\int_{0}^{T}\int_{E_{n-1}}|u|^{p_{j}+\mu-1}\, dx\, dt\right)^{1+(p_{j}-2)\, \frac{\bar p}{\lambda_{1+\mu}}}
\]
which, being $\tilde\eta_{n}\equiv 1$ on $E_{n+1}$, implies
\[
\int_{0}^{T}\int_{E_{n+1}} |u|^{p_{j}+\mu-1}\, dx\, dt\leq C\, 2^{n}\,  T^{1-(p_{j}-2)\, \frac{N}{\lambda_{1+\mu}}}\left(\frac{2}{r^{p_{j}}}\int_{0}^{T}\int_{E_{n-1}}|u|^{p_{j}+\mu-1}\, dx\, dt\right)^{1+(p_{j}-2)\, \frac{\bar p}{\lambda_{1+\mu}}}.
\]
Applying the classical form of Lemma \ref{iteration} for $N=1$ gives that the condition
\begin{equation}
\label{condmu}
\int_{0}^{T}\int_{E_{0}} |u|^{p_{j}+\mu-1}\, dx\, dt\leq C\, r^{p_{j}\big(1+\frac{\lambda_{1+\mu}}{(p_{j}-2)\, \bar p}\big)} T^{\frac{N}{\bar p}-\frac{\lambda_{1+\mu}}{(p_{j}-2)\, \bar p}}
\end{equation}
implies 
\[
\int_{0}^{T}\int_{E_{2n}} |u|^{p_{j}+\mu-1}\, dx\, dt\to 0
\]
and hence
\begin{equation}
\label{supp2}
{\rm supp}(u(\cdot, t))\subseteq \R^N\setminus \{r\leq |x_{j}|\leq 2\, r\}\quad \forall t\in [0, T].
\end{equation}
To obtain \eqref{condmu}, we employ Corollary \ref{corL1} and Theorem \ref{thL1Linf} as follows:
\[
\begin{split}
\int_{0}^{T}\int_{E_{0}} |u|^{p_{j}+\mu-1}\, dx\, dt&\leq \int_{0}^{T}\|u(\cdot, t)\|_{1}\|u(\cdot, t)\|_{L^{\infty}}^{p_{j}+\mu-2}\, dt\\
&\leq C\|u_{0}\|_{1}\int_{0}^{T}\frac{\|u_{0}\|_{1}^{\frac{\bar p}{\lambda}\, (p_{j}+\mu-2)}}{t^{\frac{N}{\lambda}\, (p_{j}+\mu-2)}}\, dt\\
&\leq C\|u_{0}\|_{1}^{1+\frac{\bar p}{\lambda}(p_{j}+\mu-2)} T^{1-\frac{N}{\lambda}\, (p_{j}+\mu-2)}
\end{split}
\]
where we recall that $\lambda=\lambda_{1}=N\, (\bar p-2)+\bar p$ and, integrating in time at the last line, we assumed
\[
\frac{N}{\lambda}\, (p_{j}+\mu-2)<1\quad \Leftrightarrow \quad \mu<\bar p_{1}-p_{j},
\]
the latter being positive due to \eqref{condp2}.  The previous discussion shows that if $r$ and $T$ obey 
\[
\|u_{0}\|_{1}^{1+\frac{\bar p}{\lambda}(p_{j}+\mu-2)} T^{1-\frac{N}{\lambda}\, (p_{j}+\mu-2)}\leq C\, r^{p_{j}\big(1+\frac{\lambda_{1+\mu}}{(p_{j}-2)\, \bar p}\big)} T^{\frac{N}{\bar p}-\frac{\lambda_{1+\mu}}{(p_{j}-2)\, \bar p}}
\]
for some constant $C$ depending only on the data and on $\mu$, then \eqref{supp2} holds. This inequality can be rewritten through some algebra as
\[
r\geq CT^{\frac{N\, (\bar p-p_{j})+\bar p}{\lambda\, p_{j}}}\|u_{0}\|_{1}^{\frac{\bar p}{p_{j}}\frac{p_{j}-2}{\lambda}}.
\]
Thus \eqref{supp2} holds for any $r\geq 2\, R_{0}$ satisfying the previous one-sided inequality, implying \eqref{fg}.

Finally, we construct the claimed branch, removing assumption \eqref{assth} in doing so. Notice that \eqref{condp2} implies \eqref{condp}, hence Lemma \ref{BS} ensures the existence of $\tau(u, 2\, R_{0}, T)>0$ such that $u(\cdot, t)\equiv 0$ on $\R^{N-M}\times (K_{4R_{0}}''\setminus K_{2R_{0}}'')$ for all $t\in [0, \tau]$. Then we define for $t\in [0, \tau]$
\[
\tilde{u}(x, t)=
\begin{cases}
u(x, t)&\text{if $x\in \R^{N-M}\times K''_{3R_{0}}$},\\
0&\text{otherwise},
\end{cases}
\] 
which is a non-zero branch of $u$ on $S_{\tau}$ satisfying \eqref{assth}, due to 
\[
\int_{S_{\tau}}|\tilde u|^{p_{i}}\, dx\, dt=\int_{0}^{\tau}\int_{\R^{N-M}\times K''_{3R_{0}}} |u|^{p_{i}}\, dx'\, dx''\, dt<+\infty
\]
 for all $i=1,\dots, N$, by the assumption $u\in \cap_{i=1}^{N}L^{p_{i}}_{\rm loc}(\R^{M}; L^{p_{i}}(\R^{N-M}\times [0, T]))$.
 Suppose that 
\[
T^{*}:=\sup\big\{\tau>0: \text{$u$ has a branch in $\cap_{i=1}^{N}L^{p_{i}}(S_{\tau})$}\big\}<T,
\]
and let 
\[
\bar R= \max\{R_{j}(T):j=N-M+1,\dots, N\},\qquad   \eps=\frac{1}{2}\tau(u,  2\, \bar R, T) 
\]
where $\tau$ is given in Lemma \ref{BS}. Then, by \eqref{fg}, on the whole $S_{T^{*}-\eps}$, $u$ has a branch supported in $\R^{N-M}\times K''_{\bar R}$. Applying Lemma \ref{BS} as in the first step, $\tilde u$ can be extended up to $S_{T^{*}+\eps}$ staying in $\cap_{i=1}^{N}L^{p_{i}}(S_{T^{*}+\eps})$, contradicting $T^{*}<T$. Thus $T^{*}=T$, there exists a branch $\tilde u\in \cap_{i=1}^{N}L^{p_{i}}(S_{\tau})$, which therefore satisfies \eqref{fg}.
\end{proof}

\appendix

\section{An example \'a la Tikhonov}

\label{sec:appA}
\noindent
In this appendix we will construct a nontrivial solution to the Cauchy problem
\[
\begin{cases}
u_{t}={\rm div}(|\nabla u|^{p-2}\nabla u)&\text{in $\R^{N}\times \, ]0, +\infty[$},\\
u(\cdot, t)\to  0&\text{as $t\to 0$ in $L^{2}_{{\rm loc}}(\R^{N})$},
\end{cases}
\]
in the slow diffusion case $p>2$.
Suppose that $\alpha, \beta>0$ satisfy
\begin{equation}
\label{alphabeta}
(p-2)\alpha=1+p\beta.
\end{equation}
Then the equation $u_{t}=\Delta_{p}u$ in $\R\times\, ]0, +\infty[$ with the ansatz $u(x, t)=t^{-\alpha}U(x\, t^{\beta})$ reduces to 
\[
-\alpha\, U(s)+\beta\, U'(s)\, s=(|U'(s)|^{p-2}U'(s))',\qquad s=x\, t^{\beta}
\]
and letting $V=|U'|^{p-2}U'$, the latter can be rewritten as the non-autonomous ODE system 
\begin{equation}
\label{sys}
\begin{cases}
U'=|V|^{\frac{2-p}{p-1}}V\\
V'=-\alpha \, U+\beta \, |V|^{\frac{2-p}{p-1}}V\, s.
\end{cases}
\end{equation}
Our aim is to construct a nontrivial solution for $s\in [1, +\infty[$ to this system with initial condition  $U(1)=V(1)=0$. Notice that $U\equiv V\equiv 0$ is certainly a solution, but the right-hand side of \eqref{sys} is only H\"older continuous in $V$ and the standard Picard uniqueness cannot be applied. We also observe that a major r\^{o}le in the following construction is some {\em anti-dissipative} feature of the system. Indeed, it is well known that, even if the ODE system ${\bf x}'={\bf F}(s, {\bf x})$ has only continuous right hand side, the dissipativity condition 
\[
\big({\bf F}(s, {\bf x})-{\bf F}(s, {\bf y})\big)\cdot ({\bf x}-{\bf y})\leq 0
\]
is enough to ensure uniqueness of the Cauchy problem. Luckily, system \eqref{sys} satisfies, for ${\bf x}=(0, V)$ and ${\bf y}=(0, 0)$ the opposite inequality ${\bf F}(s, 0, V)=\beta \, |V|^{p}\, s>0$ near $s=1$.

\medskip

To construct the solution, first observe that it is enough to do it locally, since the right hand side of \eqref{sys} has sublinear growth and by Gronwall's lemma any local solution can be extended to a global one.
A local (forward) solution of \eqref{sys} is a fixed point of the operator
\[
T({\bf x})(s)=\int_{1}^{s}{\bf F}(s, {\bf x}(s))\, ds,
\]
on the space
\[
X_{\delta}=\left\{{\bf x}\in C([1, 1+\delta], \R^{2}):{\bf x}(1)=(0, 0)\right\}
\]
 with the standard uniform norm denoted by $\|\ \|_{\delta}$, where 
\[
{\bf x}=(x_{1}, x_{2}),\qquad {\bf F}(s, {\bf x})=\big(|x_{2}|^{\frac{2-p}{p-1}}x_{2}, \, -\alpha \, x_{1}+\beta \, |x_{2}|^{\frac{2-p}{p-1}}x_{2}\, s\big).
\]
 By the standard proof of the Peano existence theorem, there exists $\bar \delta\in \, ]0, 1[$ and $M, L>0$ such that, given any $\bar x$ and $\delta\in \, ]0, \bar \delta[$ with $\|\bar x\|_{\delta}\leq M$, the sequence
\[
{\bf x}_{0}=\bar {\bf x},\qquad {\bf x}_{n+1}=T({\bf x}_{n})
\]
satisifies $\|{\bf x}_{n}\|_{\delta}\le M$ for all $n\ge 0$, is $L$ - equilipschitz for $n\ge 1$ and converges in $X_{\delta}$  to a fixed point of $T$. 

We employ a sub-supersolution method with a nontrivial subsolution. The heuristic is that our Cauchy problem for  \eqref{sys} is, near $s=1$, asymptotically equivalent to 
\[
\begin{cases}
x_{1}'=|x_{2}|^{\frac{2-p}{p-1}}x_{2},\\
x_{2}'=\beta \, |x_{2}|^{\frac{2-p}{p-1}}x_{2},
\end{cases}
\qquad x_{1}(1)=x_{2}(1)=0,
\]
whose maximal nontrivial solution is, up to multiplicative constants, $x_{2}(s)\simeq x_{1}(s)\simeq  (s-1)^{\frac{p-1}{p-2}}$.\\
For $0<a\le b$, $\delta\in \, ]0, \bar \delta[$, consider the closed convex set 
\[
C_{\delta, a, b}:=\left\{{\bf x}\in X_{\delta}: 0\le x_{1}(s)\le  b\, (s-1)^{\frac{p-1}{p-2}},\ a\, (s-1)^{\frac{p-1}{p-2}}\le x_{2}(s)\le b\, (s-1)^{\frac{p-1}{p-2}},\  s\in [1, 1+\delta]\right\}.
\]
We claim that $C_{\delta, a, b}$ is $T$-invariant for suitable choices of the parameters. If $T_{i}(x_{1}, x_{2})$ is the $i$-th coordinate of $T(x_{1}, x_{2})$, $i=1,2$, given ${\bf x}=(x_{1}, x_{2})\in C_{\delta}$ we estimate for any $s\in \, ]1, 1+\delta[$
\begin{align}
T_{2}(x_{1}, x_{2})(s)&=\int_{1}^{s}-\alpha x_{1}(\tau)+\beta \, x_{2}^{\frac{1}{p-1}}(\tau)\,\tau\, d\tau \notag \\ 
&\ge - \alpha\, b\int_{1}^{s}(\tau-1)^{\frac{p-1}{p-2}}\, d\tau+\beta\, a^{\frac{1}{p-1}}\int_{1}^{s}(\tau-1)^{\frac{1}{p-2}}\, d\tau \notag\\ 
&\ge -\alpha\, b\, \frac{p-2}{2p-3}\, (s-1)^{\frac{2p-3}{p-2}}+\beta\, a^{\frac{1}{p-1}}\, \frac{p-2}{p-1}\, (s-1)^{\frac{p-1}{p-2}}; \label{a1} \\[3pt]
T_{2}(x_{1}, x_{2})(s)&\le \beta \int_{1}^{s} x_{2}^{\frac{1}{p-1}}(\tau)\, \tau\le (1+\delta)\, \beta\, b^{\frac{1}{p-1}}\, \frac{p-2}{p-1}\, (s-1)^{\frac{p-1}{p-2}};\label{a2}\\[3pt]
0\le T_{1}(x_{1}, x_{2})(s)&=\int_{1}^{s}x_{2}^{\frac{1}{p-1}}(\tau)\, d\tau\le b^{\frac{1}{p-1}}\int_{1}^{s}(\tau-1)^{\frac{1}{p-2}}\, d\tau\le b^{\frac{1}{p-1}}\, \frac{p-2}{p-1}\, (s-1)^{\frac{p-1}{p-2}}.\label{a3}
\end{align}
Using $\delta\le \bar\delta\le  1$ in the upper bound \eqref{a1} for $T_{2}$, we first define $b$ as per
\[
\frac{p-1}{p-2}\, b^{\frac{p-2}{p-1}}= \max\{1, 2\, \beta\},
\]
which, inserted into \eqref{a2} and \eqref{a3} gives through some algebra 
\begin{equation}
\label{ubapp}
 T_{2}(x_{1}, x_{2})(s)\le  b \, (s-1)^{\frac{p-1}{p-2}}\qquad  T_{1}(x_{1}, x_{2})(s)\le b\, (s-1)^{\frac{p-1}{p-2}}.
 \end{equation}
 Then we choose $\delta\in\,  ]0, \bar\delta[$ such that 
 \[
 -\alpha\, b\, \frac{p-2}{2p-3}\, (s-1)^{\frac{2p-3}{p-2}}+\beta\, a^{\frac{1}{p-1}}\, \frac{p-2}{p-1}\, (s-1)^{\frac{p-1}{p-2}}\ge \frac{\beta}{2}\, a^{\frac{1}{p-1}}\, \frac{p-2}{p-1}\, (s-1)^{\frac{p-1}{p-2}}
 \]
 (this is possible because from $p>2$ we infer $(2p-3)/(p-2)>(p-1)/(p-2)$).
 Finally, we set 
 \[
 \frac{p-1}{p-2}\, a^{\frac{p-2}{p-1}}=\frac \beta 2,
 \]
 (notice that $a\le b$), to deduce from \eqref{a2} the lower bound
 \[
 T_{2}(x_{1}, x_{2})(s)\ge a\, (s-1)^{\frac{p-1}{p-2}}.
 \]
 The latter inequality, together with \eqref{ubapp} and the trivial one $T_{1}(x_{1}, x_{2})\ge 0$
concludes the proof of the claimed $T$-invariance of $C_{\delta, a, b}$. The starting point 
 \[
 \bar{\bf  x}(s)=\big(0, a\, (s-1)^{\frac{p-1}{p-2}}\big)
 \]
 satisfies $\|\bar{\bf x}\|\le M$ by eventually further reducing $\delta$, and thus produces by iteration a nontrivial solution $(U, V)$ to \eqref{sys} with initial conditions $U(1)=V(1)=0$.

As remarked before, the solution can be extended to the whole $[1, +\infty[$, and it remains to observe that ${\rm supp}(U)=[1,+\infty[$, due to the inequality
\[
(\alpha\, U^{2}+ \frac{p-1}{p}\, |V|^{\frac{p}{p-1}})'=\beta\, |V|^{\frac{2}{p-1}}\, s\ge 0.
\] 

For any $\alpha, \beta>0$ obeying \eqref{alphabeta}, such a trajectory defines a solution to $u_{t}=\Delta_{p}u$ in $\R^{N}\times \, ]0, +\infty[$ through
\[
u(x, t)=
\begin{cases}
t^{-\alpha}U(x_{1}\, t^{\beta})&\text{if $x_{1}\, t^{\beta}\ge 1$}\\
0&\text{otherwise},
\end{cases}
\]
which has the claimed properties, since its support is $\{x_{1}\ge t^{-\beta}>0\}$.

\vskip4pt
\noindent
{\small {\bf Acknowledgement.} We thank J. L. V\'azquez (U. Aut\'onoma de Madrid) for a discussion on non-uniqueness phenomena for the Cauchy problem. S. Mosconi and V. Vespri  are members of GNAMPA (Gruppo Nazionale per l'Analisi Matematica, la Probabilit\`a e le loro Applicazioni) of INdAM (Istituto Nazionale di Alta Matematica). F. G. D\"uzg\"un is partially funded by Hacettepe University BAP through project FBI-2017-16260; S. Mosconi is partially funded by the grant PdR 2016-2018 - linea di intervento 2: ``Metodi Variazionali ed Equazioni Differenziali'' of the University of Catania.}

\end{document}